\begin{document}

\newtheorem{theorem}{Theorem}[section]
\newtheorem*{mtheorem}{Theorem}
\newtheorem*{mthm_orb}{Theorem}
\newtheorem*{mthm_gquot}{Theorem}
\newtheorem*{thm1}{Theorem}
\newtheorem*{thm2}{Theorem}
\newtheorem*{thm3}{Theorem}

\newtheorem{prop}{Proposition}[section]
\newtheorem*{proposition}{Proposition}
\newtheorem*{prop1}{Proposition 1}
\newtheorem*{prop2}{Proposition 2}
\newtheorem*{prop3}{Proposition 3}

\newtheorem{lemma}{Lemma}[section]
\newtheorem*{lemma2}{Lemma 2}
\newtheorem*{lemma3}{Lemma 3}

\newtheorem{corollary}{Corollary}[section]
\newtheorem*{cor2}{Corollary 2}
\newtheorem*{cor3}{Corollary 3}
\newtheorem*{cor4}{Corollary 4}
\newtheorem*{cor5}{Corollary 5}

\theoremstyle{definition}
\newtheorem{definition}{Definition}[section]

\theoremstyle{remark}
\newtheorem{remark}{Remark}[section]

\theoremstyle{remark}
\newtheorem{example}{Example}[section]

\title{Geometric Structures of Collapsing Riemannian Manifolds I}

\author{Aaron Naber\thanks{Department of Mathematics, Princeton University,
        Princeton, NJ 08544 ({\tt anaber@math.princeton.edu}).} and Gang Tian\thanks{Department of Mathematics, Princeton University, Princeton, NJ 08544 ({\tt tian@math.princeton.edu}).}}

\date{\today}
\maketitle
\begin{abstract}
Let $(M^{n}_{i},g_{i},p_{i})$ be a sequence of smooth pointed complete n-dimensional Riemannian Manifolds with uniform bounds on the sectional curvatures and let $(X,d,p)$ be a metric space such that $(M^{n}_{i},g_{i},p_{i})\rightarrow (X,d,p)$ in the Gromov-Hausdorff sense. Let $\mathcal{O}\subseteq X$ be the set of points $x\in X$ such that there exists a neighborhood of $x$ which is isometric to an open set in a Riemannian orbifold and let $B=\mathcal{O}^{c}$ be the complement set.  Then we have the sharp estimates $dim_{Haus}(B)\leq min\{n-5,dim_{Haus}X -3\}$, and further for arbitrary $x\in X$  we have that $x\in \mathcal{O}$ iff a neighborhood of $x$ has bounded stratified curvature.  In particular, if $n\leq 4$ then $B=\emptyset$ and $(X,d)$ is a Riemannian orbifold.  Our main application is to prove that a collapsed limit of Einstein four manifolds has a smooth Riemannian orbifold structure away from a finite number of points, and that near these points the curvature has a $-dist^{-2}$ lower bound.
\end{abstract}

\section{Introduction}

This is the first in a series of papers where the authors seek to study and build structure for a collapsing sequence of Riemannian manifolds $(M^{n}_{i},g_{i})\stackrel{GH}{\rightarrow} (X,d)$ under a bounded curvature assumption and using this structure to study, among other things, the compactification of Einstein moduli spaces.  The study of such collapse has a rich history, see among others the foundational works of \cite{CG1} \cite{F} \cite{F1} \cite{CFG}, and in this first paper our primary focus is on metric orbifold structure of the limit space $X$.  Namely we ask the question: when and how often do points $p\in X$ have neighborhoods isometric to a Riemannian orbifold?  Work of this sort goes back to \cite{F1} where it was shown every point in $X$ has at least quotient singularity structure, and in \cite{F3} where it was further shown that if the $\{M_{i}\}$ collapse only one dimension then $X$ is in fact a Riemannian orbifold.  The basis of this theorem is that when the $\{M_{i}\}$ drop only one dimension it can be shown that the natural quotient singularity structure of $X$ must have finite isotropy at each point (see Theorem \ref{thm_main_collapse} for a generalization and a different proof of this).

In the general case, when the $\{M_{i}\}$ drop two or more dimensions, it need not be the case that the isotropy of the natural quotient action be finite.  However, this does not necessarily stop the quotient geometry on $X$ from having a further reduction to a Riemannian orbifold structure.  When a group $G$ acts on a manifold there are often local reductions of this group action, see Section \ref{sec_gtq} for a precise definition, which can be used to simplify the quotient structure.  By studying the type of group actions that can arise in the context of collapse the existence of these local reductions can be essentially classified, and so in turn can be used to understand when neighborhoods on $X$ have Riemannian orbifold structures.  It will turn out that orbifold singularities are more common than generic conic singularities.  It is worth pointing out that by applying the metric approximation results of \cite{A},\cite{AC},\cite{PWY} that many of the results of this paper also apply when $(M^{n},g_{i})$ have either uniform Ricci curvature bounds and lower bounds on the conjugacy radius or even only uniform lower Ricci bounds and a lower bound on the conjugacy radius (though in the latter case a derivative of regularity is lost on the Riemannian orbifold limit).

As an application we will use these results to study collapsing Einstein four manifolds.  Namely, using the $\epsilon$-regularity estimates of \cite{CT} we will show that the collapsed limit of a sequence of Einstein four manifolds is smooth Riemannian orbifolds away from a finite number of points with a distinct lower curvature bound.  In the sequel to this paper we will build additional structure on these limit spaces which will be useful in further understanding the topology and metric structure of the neighborhoods of these singularities.

Similar to \cite{CFG} we make the following definition for convenience.

\begin{definition}
Let $(M^{n},g,p)$ be a pointed $n$-dimensional Riemannian manifold and $\{A\}^{k}_{0}$ be a sequence of $k+1$ positive real numbers, for $0\leq k \leq \infty$.  We say $(M,g,p)$ is $\{A\}^{k}_{0}$-regular at $p$ if $\forall$ $r <\pi A_{0}^{-1/2}$ $B_{r}(p)$ has compact closure in $M$ with $|sec_{g}|\leq A^{0}$ and $|\nabla^{(j)} Rm_{g}|\leq A^{j}$ $\forall$ $1\leq j\leq k$ in $B_{r}(p)$ .  We say $(M,g)$ is $\{A\}^{k}_{0}$-regular if it is $\{A\}^{k}_{0}$-regular at every point.  If $(M,g)$ is a Riemannian manifold with boundary then we insist that $B_{r}(p)$ have compact closure in the interior of $M$.
\end{definition}

If a metric space $(X,d)$ is the Gromov-Hausdorff limit of a sequence of Riemannian manifolds which are $\{A\}^{k}_{0}$-regular we will often refer to it informally as a collapsed space.  Since we are interested in understanding the existence of points in collapsed limits with metric orbifold structures we introduce the following:

\begin{definition}\label{def_met_orb}
Let $(X,d)$ be a metric space.  For $0\leq k \leq \infty$ and $0 \leq \alpha < 1$ we call a point $p\in X$ a $C^{k,\alpha}$-orbifold point if there exists an $\epsilon>0$ such that $(B_{\epsilon}(p),d)$ is isometric to a Riemannian orbifold with a $C^{k,\alpha}$ metric; otherwise call it a non-$C^{k,\alpha}$ orbifold point.
\end{definition}
\begin{remark}
Notice our definition of orbifold and nonorbifold includes the existence of a metric on the finite cover and is more than just a topological condition.  For most of the theorems it is the existence of this metric that is the difficult part.  Also when the context is clear we will refer to a $C^{k,\alpha}$-orbifold point as simply a metric orbifold point and a non-$C^{k,\alpha}$ orbifold point as simply a nonorbifold point.
\end{remark}

Note that the set of metric orbifold points is an open set in $X$, hence the set of nonorbifold points is closed.  We also remark that if $X$ is a collapsed space then it follows from \cite{F1} that $X$ has a well defined Hausdorff dimension.

Our first main theorem is

\begin{theorem}\label{thm_main}
Let $(M^{n}_{i},g_{i},p_{i})$ be a sequence of pointed n-dimensional Riemannian Manifolds that are $\{A\}^{k}_{0}$-regular and assume $(X,d)$ is a metric space such that $(M^{n}_{i},g_{i},p_{i}) \stackrel{GH}{\rightarrow} (X,d,p_{\infty})$.  Let $\mathcal{O}\subseteq X$ be the set of $C^{k+1,\alpha}$ orbifold points and  $B=\mathcal{O}^{c}\subseteq X$ be the compliment set of nonorbifold points in $X$.  Then $dim_{Haus}(B)\leq min\{n-5, dimX-3\}$.  In particular, if $n\leq 4$ then $B=\emptyset$ and $(X,d)$ is a $C^{k+1,\alpha}$ Riemannian orbifold.
\end{theorem}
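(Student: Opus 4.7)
The strategy is to reduce the question to the structure of compact Lie group actions via the local equivariant description of collapsed limits provided by Cheeger-Fukaya-Gromov. Around any $x\in X$, the work of \cite{F1},\cite{CFG} (together with a local frame bundle construction and equivariant Gromov-Hausdorff convergence) produces a $G$-invariant open neighborhood of a preimage $\tilde x$ of $x$ that is equivariantly isometric to a $C^{k+1,\alpha}$-regular Riemannian manifold $\tilde V$ with a compact Lie group $G$ acting isometrically, in such a way that a neighborhood of $x$ in $X$ is isometric to $\tilde V/G$. The $C^{k+1,\alpha}$ regularity of the cover is produced by the $G$-invariant smoothing supplied by \cite{CFG} applied to the $\{A\}^k_0$-regular metrics $g_i$.

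Next I would recast the orbifold property entirely in terms of the local action. Using the group-theoretic quotient reductions introduced in Section~\ref{sec_gtq}, I would show that $x\in\mathcal{O}$ iff, after some finite chain of local reductions of the $G$-action near $\tilde x$, the residual isotropy becomes finite. The forward direction is tautological, since an orbifold chart directly exhibits such a reduction; the backward direction invokes the slice theorem for smooth compact group actions to produce the desired orbifold chart together with a $C^{k+1,\alpha}$ metric on its finite cover. Consequently $B$ is precisely the set of points admitting no local reduction to finite isotropy.

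To bound $\dim_{\mathrm{Haus}} B$, I would stratify $X$ by the dimension and isomorphism type of the (minimal-after-reduction) connected isotropy component $G^0_{\tilde x}$. The slice theorem presents a neighborhood of $\tilde x$ as a tubular neighborhood of the orbit, with slice an orthogonal representation of $G^0_{\tilde x}$. Analyzing compact Lie representations (the smallest case being $S^1$ on $\mathbb R^2$, the next $SU(2)$ on $\mathbb R^3$, etc.), the fixed stratum in any slice has codimension at least two, which yields a preliminary codimension-two bound downstairs. The sharp codimension-three bound requires one additional codimension coming from the obstruction to performing a further reduction, and this yields $\dim_{\mathrm{Haus}} B\le \dim X-3$. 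The bound $\dim_{\mathrm{Haus}} B\le n-5$ then follows by combining this with Theorem~\ref{thm_main_collapse}, which forces $B=\emptyset$ whenever only one dimension is being collapsed and thus forces $n-\dim X\ge 2$ whenever $B\neq\emptyset$.

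The main obstacle is precisely this sharp codimension-three estimate. Positive-dimensional residual isotropy automatically gives codimension two in its slice, but upgrading to codimension three requires showing that if positive-dimensional isotropy persisted along a codimension-two stratum, then the classification of admissible compact group actions from Section~\ref{sec_gtq} would provide a further local reduction to finite isotropy, contradicting $x\in B$. Rigorously carrying this out, using the equivariant stability of the collapse and the restricted structure of the $G^0_{\tilde x}$-representations that can arise, is the heart of the argument. Once it is established, the case $n\le 4$ is immediate: $n-5<0$ forces $B=\emptyset$, and the $C^{k+1,\alpha}$ Riemannian orbifold metric on a neighborhood of any $x\in X$ is supplied by the $G$-invariant metric on the cover $\tilde V$.
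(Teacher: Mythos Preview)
Your overall strategy---local equivariant description, then analyze isotropy representations---matches the paper's, but there are two substantive gaps.

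First, the limit group is not compact. Theorem~\ref{thm_main_collapse} produces a group $N$ acting properly and isometrically on $\tilde U_\infty$ which is only a \emph{finite extension of a connected nilpotent Lie group}; there is no reason for $N$ itself to be compact, and the frame-bundle construction does not yield this. What \emph{is} compact is the isotropy $I_{\tilde p}$, and here lies the key reduction you are missing: the identity component of $I_{\tilde p}$ is a compact connected subgroup of a nilpotent Lie group, hence abelian, hence a torus. So the isotropy is always a finite extension of a torus, and groups like $SU(2)$ never appear. This is precisely why the paper can invoke Theorem~\ref{thm_main_torus}, which is stated and proved only for toric isotropy; your proposal to stratify by general compact isotropy type and analyze $SU(2)$-representations is both unnecessary and would require substantially different machinery.

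Second, your derivation of the $n-5$ bound is circuitous and slightly off. The paper does not argue that $B\neq\emptyset$ forces $n-\dim X\ge 2$; rather, Theorem~\ref{thm_main_collapse} gives directly that $\dim I_{\tilde p}<\dim N$, so the slice through $\tilde p$ has dimension $m=n-\dim N+\dim I_{\tilde p}\le n-1$. Proposition~\ref{prop_m_torus} then gives $\dim B\le m-4\le n-5$ for the first bound, and the $\dim X-3$ bound comes from a careful dimension count inside the torus representation (splitting $\mathds R^m$ into weight planes and estimating both the fixed-point set and the effective torus rank). Your sketch of ``codimension two from the fixed stratum plus one more from the obstruction to reduction'' is in the right spirit but is not a proof; the actual argument in Proposition~\ref{prop_m_torus} handles the cases $k=0,1,2,\ge 3$ (where $k$ counts weight planes on which the residual torus acts nontrivially) separately, with $k=2$ requiring an extra application of Lemma~\ref{lem_tor_split}.
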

\begin{remark}\label{rem_thm_main}
The result is purely local in that we will actually prove that if the sequence is only assumed $\{A\}^{k}_{0}$-regular at $p_{i}$, then in a definate neighborhood of $p_{\infty}$ the result still holds.
\end{remark}

We do in fact have something a bit stronger, that in dimension $5$ the nonorbifold singular set $B$ will be a collection of isolated points, and in dimension $n\geq 6$ that $B$ will have a stratified structure of at most dimension $n-5$ ($dimX-3$).  These dimensional bounds are sharp as can be seen from Example \ref{exam_sharp_est}, and it is not enough to assume only a bound on the Ricci tensor as can be seen from Example \ref{exam_K3_Collapse} (though as remarked previously if you make the further assumption of a lower bound on the conjugacy radius then this is enough).

To characterize those points in a collapsed space which are metric orbifold we introduce the following notation:

\begin{definition}
Let $(X,g)$ be a Riemannian stratified space.  If $x\in X$ we say the curvature at $x$ is stratified bounded if there exists a $C>0$ and a neighborhood $U$ of $x$ such that for any strata $S$ restricted to $U$, the curvature of $(S,g|_{S})$ is bounded by $C$.
\end{definition}

With this in hand we get:

\begin{theorem}\label{thm_main2}
Let $(M^{n}_{i},g_{i},p_{i})$ be a sequence of pointed n-dimensional Riemannian Manifolds that are $\{A\}^{k}_{0}$-regular, $k\geq 1$, and assume $(X,d)$ is a metric space such that $(M^{n}_{i},g_{i},p_{i}) \stackrel{GH}{\rightarrow} (X,d,p_{\infty})$.   If we let $p\in X$ be an arbitrary point then $p$ is a $C^{k+1,\alpha}$- orbifold point iff the curvature at $p$ is stratified bounded.
\end{theorem}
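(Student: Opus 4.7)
The plan is to prove the two directions separately: the forward implication is essentially routine, while the converse contains the real content.

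For the forward direction, assume $p$ is a $C^{k+1,\alpha}$ orbifold point and lift a neighborhood to an orbifold chart $(\tilde{U}, \tilde{g})/\Gamma_p$ with $\Gamma_p$ finite. Every stratum $S$ through a point near $p$ is an open subset of the image of a connected component of a fixed-point set $\tilde{U}^H$ for some subgroup $H \leq \Gamma_p$. Because $\Gamma_p$ acts by isometries, $\tilde{U}^H$ is totally geodesic in $\tilde{U}$, so by the Gauss equation with vanishing second fundamental form the sectional curvature of $(S, g|_S)$ equals the restriction of the ambient curvature of $\tilde{g}$ to 2-planes tangent to $\tilde{U}^H$. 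Since $\tilde{g}$ is at least $C^2$ and $\Gamma_p$ has only finitely many subgroups, this produces a uniform bound valid on every stratum in a neighborhood of $p$.

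For the converse, assume the curvature is stratified bounded on a neighborhood $U$ of $p$ and aim to construct a $C^{k+1,\alpha}$ orbifold chart at $p$. Theorem \ref{thm_main} already gives that the non-orbifold set $B$ has Hausdorff codimension at least three in $U$, so the orbifold locus $\mathcal{O}$ is open and dense and carries a consistent $C^{k+1,\alpha}$ orbifold atlas; the task is to extend this atlas across the point $p$. The strategy is to invoke the local group-theoretic quotient model from Section \ref{sec_gtq}: after shrinking $U$, there is a smooth Riemannian manifold $(\tilde{U}, \tilde{g})$ and an isometric action of a Lie group $G$ with $U = \tilde{U}/G$, such that the orbit-type stratification of $\tilde{U}$ descends to the Riemannian stratification of $U$. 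If stratified bounded curvature on $U$ can be upgraded to a uniform two-sided sectional curvature bound on the ambient cover $\tilde{U}$, then by Cheeger-Gromov regularity in harmonic coordinates, together with the $\{A\}^k_0$-regularity inherited from the approximating $M_i$, the metric $\tilde{g}$ is $C^{k+1,\alpha}$. A positive-dimensional isotropy $G_p$ can then be ruled out by a scaling argument: it would force a lower stratum through $p$ whose intrinsic curvature diverges, contradicting the stratified hypothesis; hence $G_p$ is finite and by the slice theorem $p$ is a $C^{k+1,\alpha}$ orbifold point.

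The main obstacle is the bootstrap from a stratum-by-stratum bound on $U$ to an ambient two-sided bound on $\tilde{U}$. On $G$-horizontal 2-planes projecting into a stratum, the O'Neill submersion identities combined with $G$-invariance of $\tilde{g}$ reduce the ambient bound to the hypothesis on that stratum, but on vertical and mixed 2-planes the hypothesis gives no direct control. The cleanest route I see is a contradiction/blow-up argument: if the curvature of $\tilde{g}$ were unbounded along a sequence $x_i \in \tilde{U}$ with $\pi(x_i) \to p$, rescale by $|Rm|(x_i)$ and extract a Gromov-Hausdorff limit of the correspondingly rescaled $\{A\}^k_0$-regular approximations; by scale invariance of the stratified hypothesis, the rescaled quotient still satisfies a stratified curvature bound on compact sets, so Theorem \ref{thm_main} applies to the rescaled limit and the resulting orbifold regularity on a dense open subset conflicts with the ambient curvature concentration engineered at $x_i$.
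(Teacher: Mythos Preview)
Your forward direction is fine and in fact more explicit than the paper's one-line ``clear.''

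The converse, however, has a genuine gap. Your plan hinges on the claim that once the ambient cover $(\tilde U,\tilde g)$ is under control, a positive-dimensional isotropy $G_p$ is ruled out by a scaling argument, after which finite isotropy plus the slice theorem finishes. This is false: positive-dimensional isotropy is entirely compatible with stratified bounded curvature and with the quotient being a Riemannian orbifold. The basic example is $S^1$ acting on $(\mathds{R}^3,g_{\mathrm{eucl}})$ by rotation around the $z$-axis (Example~1.2 in the paper). The isotropy at the origin is all of $S^1$, yet the quotient is the flat half-plane $\mathds{R}^2/\mathds{Z}_2$, a perfectly good Riemannian orbifold with identically zero stratified curvature. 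Your scaling argument, if it worked, would rule out this example; so it cannot be correct as stated.

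Relatedly, the ``main obstacle'' you identify --- bootstrapping stratified bounds on $U$ to two-sided bounds on $\tilde U$ --- is a red herring. The cover $(\tilde U,\tilde g)$ is not something you must build and then regularize; it is handed to you directly by Theorem~\ref{thm_main_collapse} as a $C^{k+1,\alpha}$ Riemannian manifold with an isometric action of $N$, a finite extension of a connected nilpotent. The curvature of $\tilde g$ is already bounded; the entire difficulty is whether the quotient by $N$ (with possibly positive-dimensional torus isotropy) is an orbifold. The paper resolves this not by eliminating positive-dimensional isotropy but by classifying when a torus quotient is an orbifold: the tangent cone at $p$ is $\mathds{R}^m/I_{\tilde p}$, and by Lemma~\ref{lem_split_curv} this cone has bounded stratified curvature iff the torus action $I_{\tilde p}$ on $\mathds{R}^m$ is \emph{split}. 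Under the stratified-boundedness hypothesis the tangent cone is stratified flat, forcing the action to be split, and then Lemma~\ref{lem_tor_split} produces a local Riemannian reduction $(S,\Gamma)$ with $\Gamma$ finite, which is exactly an orbifold chart. The mechanism you are missing is this reduction of a positive-dimensional torus quotient to a finite one, not the elimination of the torus.
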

\begin{remark}
In fact it is enough in the above to assume only that the curvature on the open dense manifold part of a neighborhood of $p$ is uniformly bounded.  Also when $k=0$ something may be said, but one must consider the alexandroff curvature of each strata in a neighborhood of $p$.
\end{remark}

The above tells us in particular that in the category of collapsed metric spaces that if $(X,d)$ has stratified bounded curvature then $(X,d)$ is a Riemannian orbifold.  As an application we obtain:

\begin{theorem}\label{thm_main3}
Let $(M_{i},g_{i},p_{i})$ be a sequence of compact Einstein Four Manifolds with Einstein constants $|\lambda_{i}|\leq 3$ and Euler numbers $\chi(M_{i})\leq D$.  Then after passing to a subsequence there exists a metric space $(X,d,p)$ and points $\{q_{j}\}_{1}^{N}\in X$, with $N\leq N(D)$, such that $(M_{i},g_{i},p_{i})\stackrel{GH}{\rightarrow}(X,d,p)$ and if $x\in X-\bigcup q_{j}$ then a neighborhood of $x$ is isometric to a smooth Riemannian Orbifold.  Further there exists a universal constant $C$ such that for each $x\in X$ we have the curvature estimate $sec(x) \geq min\{-1,-C d(x,\{q_{j}\})^{-2}\}$
\end{theorem}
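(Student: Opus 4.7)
The strategy is to combine the $\epsilon$-regularity of Cheeger--Tian \cite{CT} for Einstein four-manifolds with Theorem \ref{thm_main} of this paper, which (since $n=4$) forces $B=\emptyset$ and forces any GH-limit of $\{A\}^{k}_{0}$-regular sequences to be a $C^{k+1,\alpha}$ Riemannian orbifold. The bad points $\{q_j\}$ will arise as the finitely many loci where curvature concentrates so as to prevent a uniform $\{A\}^{k}_{0}$-bound. The core input is the Chern--Gauss--Bonnet identity for an Einstein four-manifold $(M,g)$ with $\mathrm{Ric}=\lambda g$,
\[
8\pi^{2}\chi(M) \;=\; \int_{M}\Bigl(|W|^{2}+\tfrac{R^{2}}{24}\Bigr),
\]
whose nonnegative integrand together with $\chi(M_{i})\leq D$ gives the global bound $\int_{M_{i}} |W|^{2} \leq 8\pi^{2} D$. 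Since $|\lambda_{i}| \leq 3$ yields a two-sided Ricci bound, Bishop--Gromov provides a uniform local volume bound $\mathrm{vol}(B_{R}(x)) \leq V(R)$, so the scalar contribution is locally controlled and we obtain the uniform local $L^{2}$-curvature bound $\int_{B_{R}(p_{i})}|Rm|^{2} \leq C(R,D)$.

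I would then introduce the curvature concentration set $S_{i} \subset M_{i}$ consisting of those $q$ for which $\int_{B_{r}(q)}|Rm|^{2} \geq \epsilon_{0}$ at every small scale, where $\epsilon_{0}$ is the Cheeger--Tian threshold: for an Einstein four-manifold with $|\lambda|\leq 3$, $\int_{B_{2r}(x)}|Rm|^{2} < \epsilon_{0}$ implies $\sup_{B_{r}(x)}|Rm| \leq C_{0} r^{-2}$. A Vitali-type point selection argument together with the above $L^{2}$-bound controls $|S_{i}\cap B_{R}(p_{i})|$ by $C(R,D)/\epsilon_{0}$, and a bubbling argument---each concentration point accounting for at least $\epsilon_{0}$ of $|W|^{2}$ via a nontrivial Einstein bubble---promotes this to a global bound $|S_{i}| \leq 8\pi^{2}D/\epsilon_{0}=: N(D)$. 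Away from $S_{i}$, the $\epsilon$-regularity gives pointwise $|Rm|(y) \leq C_{0}\, d(y,S_{i})^{-2}$; elliptic bootstrap in harmonic coordinates (the Einstein condition being elliptic once the gauge is fixed) then produces uniform $\{A\}^{k}_{0}$-regularity for every $k$ on any set of uniform distance from $S_{i}$. Extracting a GH subsequence using this local curvature control yields $(M_{i},g_{i},p_{i}) \to (X,d,p)$ with $S_{i} \to \{q_{j}\}_{1}^{N}$, $N \leq N(D)$. Applying Theorem \ref{thm_main} locally (Remark \ref{rem_thm_main}) around any $x \in X \setminus \{q_{j}\}$, and using $n=4$, yields the smooth Riemannian orbifold structure on $X \setminus \{q_{j}\}$. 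The pointwise bound $|Rm|\leq C_{0}\, d(\cdot,S_{i})^{-2}$ passes to each smooth stratum in the limit, giving $\sec(x) \geq \min\{-1,\ -C\, d(x,\{q_{j}\})^{-2}\}$, with the $-1$ term being the effective bound once $d(x,\{q_{j}\})$ is a uniform distance from zero.

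The main obstacle I anticipate is the global bound $N \leq N(D)$ on the number of limit singular points. Locally the $L^{2}$-estimate bounds the number of concentration points in any fixed ball, but its constant $C(R,D)$ grows with $R$ and does not by itself preclude accumulation or an unbounded total count as the manifolds collapse. The resolution is the bubbling analysis: each $q_{j}$ carries a nontrivial Einstein bubble accounting for a definite amount of the global $|W|^{2}$-energy, and the Chern--Gauss--Bonnet cap $\int |W|^{2}\leq 8\pi^{2} D$ limits the total number of bubbles purely in terms of $D$. A secondary subtlety is that the GH-convergence itself must be produced without a priori sectional curvature bounds; this is handled by working on the complement of $S_{i}$, where the $\epsilon$-regularity supplies exactly the curvature control needed for Gromov compactness on compact subsets of $X \setminus \{q_{j}\}$.
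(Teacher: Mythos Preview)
Your proposal is correct and follows essentially the same line as the paper: locate finitely many curvature-concentration points via the Cheeger--Tian $\epsilon$-regularity theorem, obtain $|Rm|(y)\leq C\,d(y,\{q_{ij}\})^{-2}$ off that set, bootstrap via the Einstein equation to full $\{A\}^{\infty}_{0}$-regularity on balls of definite distance from the bad set, and then invoke Theorem~\ref{thm_main} (with Remark~\ref{rem_thm_main}) in dimension four to conclude the orbifold structure; the lower sectional bound is passed to the limit in the Alexandrov sense and inherited by the orbifold chart. The paper simply cites \cite{CT} for the existence of $\{q_{ij}\}$ with $N\leq N(D)$ and the pointwise curvature estimate, whereas you sketch the underlying Chern--Gauss--Bonnet/bubbling mechanism; and one small correction to your ``secondary subtlety'': Gromov precompactness for $(M_{i},g_{i},p_{i})$ requires only the two-sided Ricci bound $|\lambda_{i}|\leq 3$, so the GH-limit $(X,d,p)$ exists before any sectional control is established---the paper obtains $X$ first and only afterward transports the $\epsilon$-regularity estimate into it.
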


It is seen from the work of \cite{GW}, see example \ref{exam_K3_Collapse}, that unlike the noncollapsing case there can be points in an Einstein limit $X$ which are nonorbifold and have curvature blow up.

A brief outline of the paper is as follows.  In the Section \ref{sec_col} we do some analysis, based on the work of \cite{CFG}, to study the local quotient singularity structure of $X$.  Namely we wish to see that not only can $X$ be locally realized as a quotient geometry, but that the isotropy of this action has a dimensional bound based on the amount $X$ collapses.  With $X$ locally written as a quotient geometry Sections \ref{sec_gtq} and \ref{sec_gtq2} seek to classify when the geometric quotient structure of Section \ref{sec_col} can be reduced to a finite geometric quotient.  In particular in Section \ref{sec_gtq} it is shown that away from a subset of codimension $3$ in $X$ every point has such a reduction, and in Section \ref{sec_gtq2} it is shown that a point in $X$ having such a reduction is equivalent to a bound on the curvature of $X$ (in the stratified sense).  Most of these two sections are centered on analyzing group quotients and many of the results also work for arbitrary such quotients.  Section \ref{sec_main1} uses these results to prove the first two main theorems.  The proof of Theorem \ref{thm_main3}, which is a straight forward application of Theorem \ref{thm_main} and the results of \cite{CT}, is done in Section \ref{sec_main2}.  In Appendix \ref{sec_gq} some basic information about quotient geometries which is used throughout the paper is presented.  An important tool used in the paper is that distance preserving maps between Riemannian orbifolds are smooth in the orbifold sense, and this is proved in Appendix \ref{sec_orb}.

We begin with some examples:

\begin{example}
Consider $T^{3}=S^{1}\times S^{1}\times S^{1}$ with a metric $g_{\epsilon} = d\theta^{2}_{1} + d\theta^{2}_{2} + \epsilon d\theta^{2}_{3}$.  Let $\mathds{Z}_{2}$ act on $T^{3}$ by $(e^{i\theta_{1}}, e^{i\theta_{2}}, e^{i\theta_{3}})\mapsto (e^{i\theta_{1}}, e^{-i\theta_{2}}, -e^{i\theta_{3}})$.  Then $\mathds{Z}_{2}$ acts freely and isometrically and by taking a quotient we get the Riemannian Manifolds $(T^{3}/\mathds{Z}_{2},g_{\epsilon})$.  If we let $\epsilon \rightarrow 0$ then $(T^{3}/\mathds{Z}_{2},g_{\epsilon})$ Gromov-Hausdorff converges to a cylinder with boundary.
\end{example}

\begin{example}
Consider $\mathds{R}^{3}\times S^{1}$ and the $T^{2}$ action by $(e^{i\phi_{1}},e^{i\phi_{2}}) \cdot (x,y,z,e^{i \theta}) \mapsto (x,cos\phi_{1} y+sin\phi_{1} z, -sin\phi_{1} y +cos\phi_{1} z, e^{i(\phi_{2}+\theta)})$.  Then this is a faithful $T^{2}$ action and as in \cite{CG1} we can then pick a sequence of metrics $g_{\epsilon}$ on $\mathds{R}^{3}\times S^{1}$ which will collapse with bounded curvature such that $(\mathds{R}^{3}\times S^{1}, g_{\epsilon})\rightarrow \mathds{R}^{3}/S^{1}$, where $S^{1}$ acts on $\mathds{R}^{3}$ by $e^{i\phi}\cdot(x,y,z) = (x,cos\phi y+sin\phi z, -sin\phi y +cos\phi z)$ and hence is isometric to the half plane $\mathds{R}^{2}/\mathds{Z}_{2}$.  The important point of this simple example is that although the isotropy of the $S^{1}$ action on $\mathds{R}^{3}$ jumps in dimension on the $x$-axis, the quotient still has a smooth Riemannian orbifold structure.

We can construct a similar sequence in the compact category by considering $S^{3}\times S^{1}$.  Then if $p_{n}$, $p_{s}$ are the north and south poles of $S^{3}$, then we can write $S^{3}-\{p_{n},p_{s}\} \approx \mathds{R}\times S^{2}$.  Then we can construct a smooth $S^{1}$ action on $S^{3}$ by rotating the $S^{2}$ factor.  Again this gives a faithful $T^{2}$ action on $S^{3}\times S^{1}$ and we can collapse with bounded curvature to $S^{3}/S^{1}$, which is isometric to the half sphere $S^{2}/\mathds{Z}_{2}$.
\end{example}

\begin{example}\label{exam_sharp_est}
Consider $\mathds{R}^{4}\times S^{1}$.  Writing $\mathds{R}^{4}-\{0\} \approx \mathds{R}\times S^{3}$ and $S^{3} = \{(z_{1},z_{2})\in \mathds{C}^{2}: |z_{1}|^{2}+|z_{2}|^{2}=1\}$ we can let $S^{1}$ act on $S^{3}$ by $\lambda\cdot (z_{1},z_{2}) = (\lambda z_{1}, \lambda^{2} z_{2})$.  This gives a smooth action on $\mathds{R}^{4}-\{0\}$ which extends to a smooth action on $\mathds{R}^{4}$.  Thus we have a faithful $T^{2}$ action on $\mathds{R}^{4}\times S^{1}$ by letting the second $S^{1}$ factor act trivially on itself.  As before we can collapse along the orbits with bounded curvature to get $(\mathds{R}^{4}\times S^{1}, g_{\epsilon})\rightarrow (X,d)$, where now $X$ is a cone over the teardrop orbifold.  Away from the origin $X$ is itself a smooth Riemannian Orbifold, but the curvature blows up as one approaches the origin and thus $X$ has a true conic metric singularity at an isolated point.  This shows us that Theorem \ref{thm_main} is sharp.  Again we can do a similar construction in the compact category by considering $S^{4}\times S^{1}$.
\end{example}

\begin{example}\label{exam_K3_Collapse}
From \cite{GW} one can construct a sequence of $K3$ surfaces $(M^{4},g_{i})$ where $g_{i}$ are Ricci-Flat and $(M^{4},g_{i})\stackrel{GH}{\rightarrow} (S^{2},d)$, where $S^{2}$ is the topological sphere and $d$ is a distance function induced by a metric $g_{\infty}$ which is smooth away from precisely $24$ points $\{q_{j}\}^{24}_{1}\in S^{2}$.  Near these points the curvatures of the smooth metric tend to $\infty$ at a rate bounded by  $o(d^{-2})$.
\end{example}

It is also interesting to point out that if we consider the closed unit disk $\bar D$ in $\mathds{R}^{2}$ that Theorem \ref{thm_main2} tells us that $\bar D$ can not be approximated arbitrarily closely in the Gromov Hausdorff sense by Riemannian manifolds with any uniform bounds on the curvature and dimension.  This follows because although $\bar D$ is certainly a topological orbifold which is flat as an Alexandroff space (and in the stratified sense), it is easy to check that it is not a Riemannian orbifold.

\section{Structure of Collapsed Spaces}\label{sec_col}

The goal of this section is to prove some slight refinements of theorems from \cite{F1}, \cite{F3}, \cite{CFG}.  The type of structure introduced in the next theorem was first observed in \cite{F1} and later refined in \cite{CFG} where it was used to study collapsed regions of individual Riemannian manifolds.  The proof of the first part of the following is as in \cite{CFG}, however we will go through it both for convenience and because we wish to understand the structure of the isotropy group limit.

\begin{theorem}\label{thm_main_collapse}
Let $(M^{n}_{i},g_{i},p_{i})$ be smooth $n$-dimensional Riemannian Manifolds that are $\{A\}^{k}_{0}$-regular at $p_{i}$.  Assume $(M_{i},g_{i},p_{i})\stackrel{GH}{\rightarrow} (X,d,p_{\infty})$, a metric space.  Then there exists $\epsilon>0$ such that for all $i$ sufficiently large there are neighborhoods $B_{2\epsilon}(p_{i})\subseteq U_{i} \subseteq M_{i}$ with the properties

\begin{enumerate}
\item If $\tilde{U}_{i}$ is the universal cover of $U_{i}$ and $\tilde{p}$ is any lift of a point $p\in B_{\epsilon}(p_{i})$, then $inj_{\tilde U_{i}}(\tilde p) > \epsilon$.

\item If $\Lambda_{i}$ are the fundamental groups of $U_{i}$ and $\tilde p_{i}\in\tilde U_{i}$ is a lift of $p_{i}$ then, after passing to a subsequence, $(\tilde U_{i},\tilde g_{i}, \Lambda_{i}, \tilde p_{i})\stackrel{eGH - C^{k+1,\alpha}}{\rightarrow}(\tilde U_{\infty}, \tilde g_{\infty}, N, \tilde p_{\infty})$ where $N$, a closed subgroup of the isometry group which acts properly on $(\tilde U_{\infty},\tilde g_{\infty})$, is a finite extension of a connected nilpotent Lie Group.

\end{enumerate}

Further, if $inj_{M_{i}}(p_{i})$ is not uniformly bounded from below and $I_{\tilde p_{\infty}}$ is the isotropy subgroup of $N$ at $\tilde p_{\infty}$ then $dim I_{\tilde p_{\infty}} < dim N$.
\end{theorem}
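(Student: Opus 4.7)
I would treat items (1) and (2) as a repackaging of the CFG \cite{CFG} collapse theory (as the authors themselves indicate) and spend the real work on the ``Further'' dimension inequality. For (1), pick $\epsilon>0$ depending only on $A^0$ and $n$, and take $U_i\supseteq B_{2\epsilon}(p_i)$ to be a neighborhood large enough to contain the short-loop generators produced by the local collapse, so that the universal cover $\tilde U_i$ unfolds all collapsing directions; then $\mathrm{inj}_{\tilde U_i}(\tilde p)>\epsilon$ for lifts $\tilde p$ over $B_\epsilon(p_i)$ follows from bounded sectional curvature on the simply-connected $\tilde U_i$ via a Klingenberg-type estimate. For (2), uniform bounded geometry on the covers gives Cheeger--Gromov $C^{k+1,\alpha}$ compactness, and Fukaya's equivariant compactness extracts a closed Lie subgroup $N\subseteq \mathrm{Isom}(\tilde U_\infty)$ from $\Lambda_i$ acting properly on $\tilde U_\infty$. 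The assertion that $N^0$ is connected nilpotent with $N/N^0$ finite is the CFG nilpotent Killing structure theorem applied in the limit.

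For the ``Further'' assertion I argue directly. Suppose $\mathrm{inj}_{M_i}(p_i)$ is not uniformly bounded below, so after passing to a subsequence there exist non-identity $\lambda_i\in \Lambda_i$ with $\delta_i:=d(\tilde p_i,\lambda_i\tilde p_i)\to 0$. The goal is to produce a non-trivial one-parameter subgroup of $N^0$ whose orbit moves $\tilde p_\infty$; such an element lies in $N^0\setminus I_{\tilde p_\infty}$, forcing $\dim I^0_{\tilde p_\infty}<\dim N^0$, and since $I/I^0$ and $N/N^0$ are finite this yields $\dim I_{\tilde p_\infty}<\dim N$. For any fixed $T>0$ set $k_i:=\lfloor T/\delta_i\rfloor$; by the triangle inequality $d(\tilde p_i,\lambda_i^{k_i}\tilde p_i)\le k_i\delta_i\le T$, so $\{\lambda_i^{k_i}\}$ is precompact in $\mathrm{Isom}(\tilde U_i)$ and a subsequence converges to some $\nu_T\in N$. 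Using bounded curvature on $\tilde U_i$ to control the rescaled logarithms, one shows that the Killing fields $X_i:=\delta_i^{-1}\log\lambda_i$ converge in $C^{k+1,\alpha}$ on compact subsets to a unit Killing field $X_\infty$ on $\tilde U_\infty$, and that $\nu_T=\exp(TX_\infty)$. Since $|X_\infty(\tilde p_\infty)|=1$ by the normalization, the orbit point $\nu_T\tilde p_\infty$ lies at distance comparable to $T$ from $\tilde p_\infty$, so $\nu_T\notin I_{\tilde p_\infty}$ for any $T>0$; and $\{\nu_T\}_{T\in\mathbb{R}}$ is a continuous one-parameter subgroup of $N$, hence lies in $N^0$.

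The main obstacle is the Killing-field extraction: showing that $\delta_i^{-1}\log\lambda_i$ converges to a non-trivial Killing field on $\tilde U_\infty$ and that its time-$T$ flow matches the limit of $\lambda_i^{\lfloor T/\delta_i\rfloor}$. This is the infinitesimal version of the limit-group construction; the quantitative input is that on scales $\delta_i$, the isometry $\lambda_i$ differs from its one-parameter-group approximation by $O(|\mathrm{sec}|\delta_i^2)$, so $\lfloor T/\delta_i\rfloor$ iterations incur an error $O(T\delta_i)\to 0$. Once the Killing field is in hand, the normalization $|X_\infty(\tilde p_\infty)|=1$ is immediate from $d(\tilde p_i,\lambda_i\tilde p_i)=\delta_i$, and the rest of the dimensional comparison is formal.
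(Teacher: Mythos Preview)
Your sketch of (1) via ``Klingenberg on the simply-connected cover'' is not enough: bounded sectional curvature on a simply-connected manifold does not by itself rule out short closed geodesics, so there is no automatic injectivity-radius lower bound. The paper constructs $U_i$ specifically as a tube around an infranil fiber $S_i$ and proves the bound on $\tilde U_i$ from the controlled geometry of the lifted fiber $\bar S_i$ (Lemmas \ref{lem_inj}, \ref{lem_inj_sub}); this structure is then reused for the isotropy estimate. Also, taking $\epsilon$ to depend only on $A^0,n$ does not yield $N/N^0$ finite (Remark \ref{rem_col}).

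The real gap is in your Killing-field extraction for the ``Further'' claim. From $d(\tilde p_i,\lambda_i\tilde p_i)=\delta_i\to 0$ you cannot conclude that $\lambda_i$ is close to the identity in $\mathrm{Isom}(\tilde U_i)$: the displacement at $\tilde p_i$ controls only the translational part, while the rotational part of $\lambda_i$ may stay bounded away from zero. Take $\tilde U_i=\mathbb{R}^3$, $\tilde p_i=0$, and $\lambda_i$ the screw motion given by rotation through a fixed irrational angle $\theta$ about the $z$-axis followed by translation by $t_i\to 0$ along it (this acts freely, so it is a legitimate deck transformation). Then $\delta_i=t_i$, but $\delta_i^{-1}\log\lambda_i$ has rotational part of order $\theta/t_i\to\infty$; any renormalization making $X_i$ converge yields the infinitesimal rotation as $X_\infty$, which vanishes at $\tilde p_\infty$ and hence generates a subgroup of the isotropy---the opposite of what you want. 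Likewise the powers $\lambda_i^{\lfloor T/\delta_i\rfloor}$ have rotational part $\lfloor T/t_i\rfloor\theta \pmod{2\pi}$, which does not converge, so your $\nu_T$ is not well-defined without incompatible subsequences for different $T$. (The theorem still holds in this example: $N=S^1\times\mathbb{R}$, $I=S^1$.) The paper's argument sidesteps all of this: having built $U_i$ as a tube around the infranil fiber $S_i$, it observes that $\mathrm{inj}\to 0$ forces $\dim S_i\geq 1$, that the lifts $\bar S_i$ converge to a positive-dimensional submanifold $\bar S_\infty\subset\tilde U_\infty$ on which $N$ acts transitively, and hence $\bar S_\infty\cong N/I_{\tilde p_\infty}$ gives $\dim N>\dim I_{\tilde p_\infty}$ directly.
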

\begin{remark}\label{rem_col}
The neighborhoods $U_{i}$ are related to the $(\rho,k)$-\textit{round} neighborhoods of \cite{CFG}, though while the $(\rho,k)$-\textit{round} neighborhoods were meant to capture all sufficiently collapsed directions of $M_{i}$ the neighborhoods $U_{i}$ are only meant to capture those directions which \textit{continue} to collapse in the sequence.  This is why $N$ is only a finite extension of its identity component and the reason that the $\epsilon$ variable in the theorem depends on the collapsed limit $X$ itself and not just on the regularity properties of the $(M_{i},g_{i})$.  Additionally one can take $U_{\infty}/N$ to be contractible in the above.  If one is willing to let $N$ be a not necessarily finite extension of a connected nilpotent, or for $U_{\infty}/N$ to not be contractible, then $\epsilon$ can be taken to depend only on $A^{0}$ and $n$.  The $C^{k+1,\alpha}$ convergence is in the Cheeger-Gromov sense (see \cite{P} for instance) and for a rigorous definition of equivariant Gromov Hausdorff convergence (abbreviated eGH) see \cite{F3} or \cite{CFG}.
\end{remark}

The dimension estimate on the isotropy $I$ can be viewed as a generalization of a theorem in \cite{F3} where this is proved under the assumption that $M_{i}$ collapse only one dimension.  In particular a corollary of the above is the following:

\begin{corollary}[Fukaya \cite{F3}]
Let $(M^{n}_{i},g_{i},p_{i})$ be complete $n$-dimensional Riemannian Manifolds that are $\{A\}^{k}_{0}$-regular at $p_{i}$.  Assume $(M_{i},g_{i},p_{i})\stackrel{GH}{\rightarrow} (X,d,p_{\infty})$, a metric space with $dim_{Haus} X = n-1$.  Then $X$ is a Riemannian orbifold.
\end{corollary}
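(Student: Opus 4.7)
The plan is to apply Theorem~\ref{thm_main_collapse} at each point of $X$ and use the Hausdorff dimension hypothesis to force both $\dim N = 1$ and finite isotropy, from which a local orbifold chart follows via the slice theorem. Fix an arbitrary $q\in X$ with lifts $q_{i}\in M_{i}$, $q_{i}\to q$; the theorem provides, in a neighborhood of $q$, a local isometry $X\cong \tilde U_{\infty}/N$, with $\tilde U_{\infty}$ an $n$-manifold of bounded geometry and $N$ a finite extension of a connected nilpotent Lie group acting properly and isometrically.

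The first step is to pin down $\dim N=1$. Since $N$ is a closed subgroup of the isometry group of $\tilde U_{\infty}$ its action is effective. Every compact subgroup of the connected nilpotent $N^{0}$ is a torus, and any positive-dimensional subtorus contained in the principal isotropy would fix the open dense principal stratum, and hence, by continuity and connectedness, all of $\tilde U_{\infty}$, contradicting effectiveness. So the principal isotropy of $N$ is finite, and the Hausdorff dimension of $\tilde U_{\infty}/N$ equals $n-\dim N$; the hypothesis $\dim_{Haus}X=n-1$ thus forces $\dim N=1$.

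Next, we show $I_{\tilde q}$ is finite. If $inj_{M_{i}}(q_{i})$ were uniformly bounded below, then $(M_{i},g_{i},q_{i})$ would subconverge in the pointed $C^{k+1,\alpha}$ sense to a smooth $n$-manifold near $q$, contradicting $\dim_{Haus}X=n-1$. Thus the injectivity radius collapses at $q_{i}$, and the last clause of Theorem~\ref{thm_main_collapse} gives $\dim I_{\tilde q}<\dim N=1$, i.e.\ $I_{\tilde q}$ is finite. Upper semicontinuity of the isotropy dimension for proper Lie group actions propagates this finiteness to a whole neighborhood of $\tilde q$ in $\tilde U_{\infty}$.

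Finally, with $N^{0}\in\{\mathds{R},S^{1}\}$ acting properly, isometrically, and with everywhere finite isotropy, the slice theorem (see Appendix~\ref{sec_gq}) realises $\tilde U_{\infty}/N$ locally at $[\tilde q]$ as the quotient of a smooth Riemannian $(n-1)$-ball by the finite group $I_{\tilde q}$, which is precisely a $C^{k+1,\alpha}$ Riemannian orbifold chart; the regularity is inherited from the Cheeger-Gromov $C^{k+1,\alpha}$ convergence in Theorem~\ref{thm_main_collapse}. Iterating at every $q\in X$ makes $X$ a Riemannian orbifold. The main obstacle is the first step: controlling the principal isotropy of the nilpotent part of $N$ precisely enough that the Hausdorff dimension hypothesis can be converted into the crucial equality $\dim N=1$; once that is secured, the theorem's isotropy bound combined with the slice theorem quickly give the orbifold structure.
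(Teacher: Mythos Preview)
Your proposal is correct and follows essentially the same route as the paper: apply Theorem~\ref{thm_main_collapse} to obtain the local quotient $\tilde U_\infty/N$, use $\dim_{Haus}X=n-1$ to get $\dim N=1$, invoke the isotropy bound $\dim I_{\tilde p_\infty}<\dim N$ to force finite isotropy, and then conclude via Corollary~\ref{cor_geom_slice}. The paper's proof is three terse sentences; you have supplied the justifications the paper leaves implicit, in particular the argument (via effectiveness and triviality of the principal isotropy) that $\dim(\tilde U_\infty/N)=n-\dim N$, and the observation that the injectivity radius must collapse so that the isotropy clause of Theorem~\ref{thm_main_collapse} actually applies. One small remark: your ``upper semicontinuity'' sentence is unnecessary, since Corollary~\ref{cor_geom_slice} applied at $\tilde q$ already produces an orbifold chart on a full neighborhood once $I_{\tilde q}$ is finite; and in your principal-isotropy argument the real reason the torus fixes the whole principal stratum is that the principal isotropy group is locally (hence, by connectedness of the principal stratum, globally) constant, rather than any special feature of nilpotency.
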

\begin{proof}
By Theorem \ref{thm_main_collapse} $X$ is locally an isometric quotient of a Riemannian manifold by a proper and faithful Lie group action $N$.  Since $dim X=n-1$ we see that $dim N=1$, and since the dimension of the isotropy is strictly less than that of $N$ we see that $dim I_{p} = 0$ for every $p\in X$.  Hence by Corollary \ref{cor_geom_slice} we have the result.
\end{proof}

We begin with a review of some of the basic constructions in \cite{F1}.

\begin{definition}\label{def_frame_metr}
Let $(M,g)$ be a Riemannian Manifold, possibly with boundary.  We define the associated Riemannian Manifold $(FM,h)$ where $FM$ is the $O(n)$-frame bundle of $M$ and $h$ is the canonical metric defined by using the Levi-Civita connection to define a horizontal distribution, letting each fiber be isometric to the standard bi-invariant metric on $O(n)$, and assuming the projection map is a Riemannian submersion.
\end{definition}

The following are easy to check

\begin{lemma}\label{lem_frame_reg}
Let $(M,g)$ be $\{A\}^{k}_{0}$-regular, then $(FM,h)$ is $\{B\}^{k-1}_{0}=\{B(A,n)\}^{k-1}_{0}$-regular.
\end{lemma}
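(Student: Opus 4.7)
The plan is to exploit the structure of $\pi\colon (FM, h) \to (M, g)$ as a Riemannian submersion with compact totally geodesic fibers. The right $O(n)$-action on $FM$ preserves both the Levi-Civita connection (hence the horizontal distribution) and the bi-invariant fiber metric, so it acts on $(FM, h)$ by isometries; this forces each fiber to be totally geodesic, so that O'Neill's $T$-tensor vanishes identically. The $A$-tensor, defined by $A_{X}Y = \tfrac{1}{2}\mathcal{V}[X, Y]$ on horizontal vector fields, coincides via the standard principal-bundle identification with the curvature $2$-form of the Levi-Civita connection, and is therefore pointwise controlled by $|Rm_{g}|$ with a constant depending only on $n$.

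First I would apply O'Neill's formulas to write, schematically,
\[
R^{FM}(X, Y, Z, W) = R^{M}(\pi_{*} X, \pi_{*} Y, \pi_{*} Z, \pi_{*} W) + Q(A)
\]
on horizontal vectors, together with the (constant) bi-invariant curvature on vertical vectors and mixed terms that are again algebraic in $A$. Since $|A| \lesssim |Rm_{g}|$, this immediately produces a pointwise bound $|sec_{h}| \leq B^{0}(A^{0}, n)$ from the assumption $|sec_{g}| \leq A^{0}$.

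Second, I would bound higher derivatives by working in a fiber-adapted chart $(x^{i}, \xi^{a})$ on $FM$. There the horizontal lift of $\partial_{x^{i}}$ is $\partial_{x^{i}} - \Gamma^{c}_{ia}(x, \xi)\,\partial_{\xi^{c}}$, so that the components of $h$ are universal smooth functions (polynomial in $\xi^{a}$ after trivialization) of $g_{ij}(x)$ and the Christoffel symbols $\Gamma^{k}_{ij}(x)$; symbolically, $h \sim g + \partial g$. Consequently $Rm_{h} \sim \partial^{2}h + (\partial h)^{2} \sim Rm_{g} + \nabla Rm_{g}$, and inductively $\nabla^{(j)} Rm_{h}$ is a universal polynomial in $Rm_{g}, \nabla Rm_{g}, \ldots, \nabla^{(j+1)} Rm_{g}$, the metric $g$, and the (constant) fiber curvature. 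The one-derivative loss is the crux of the statement: each covariant derivative on $FM$ forces one more derivative of the connection $1$-form, which is itself a first derivative of $g$. Bounding $|\nabla^{(j)} Rm_{h}| \leq B^{j}$ for $0 \leq j \leq k-1$ then follows from $|\nabla^{(i)} Rm_{g}| \leq A^{i}$ for $0 \leq i \leq k$, with constants $B^{j} = B^{j}(A^{0}, \ldots, A^{j+1}, n)$.

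Finally, for the compact-closure condition: since $\pi$ is a Riemannian submersion one has $d_{g}(\pi\tilde{x}, \pi\tilde{y}) \leq d_{h}(\tilde{x}, \tilde{y})$, so the $h$-ball $B^{h}_{r}(\tilde p) \subseteq \pi^{-1}(B^{g}_{r}(p))$; and since the fiber $O(n)$ is compact, $\pi^{-1}$ of any set with compact closure in $M$ has compact closure in $FM$. Enlarging the constants if necessary so that $B^{0} \geq A^{0}$ ensures $\pi (B^{0})^{-1/2} \leq \pi (A^{0})^{-1/2}$, and the compact-closure hypothesis transfers directly. The only real obstacle is the careful derivative count in the second step; everything else is a structural consequence of the $O(n)$-symmetry.
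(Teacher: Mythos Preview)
The paper gives no proof of this lemma beyond the remark ``the first lemma is a direct computation,'' so your sketch is exactly the kind of argument that is being suppressed: exploit the Riemannian-submersion structure of $\pi\colon FM\to M$ with totally geodesic $O(n)$-fibers, identify the O'Neill $A$-tensor with the curvature $2$-form, and do the derivative count. Your coordinate approach in the second step, writing $h$ as a universal expression in $g$ and $\partial g$ so that $\nabla^{(j)}Rm_h$ is a polynomial in $Rm_g,\ldots,\nabla^{(j+1)}Rm_g$, is the right way to see the one-derivative loss and is sufficient for the lemma.

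One small caution on your first step: the claim that the mixed O'Neill terms are ``algebraic in $A$'' is not quite right in full generality. For a $2$-plane spanned by a generic $X+U$ and $Y+V$, expanding $R(X+U,Y+V,Y+V,X+U)$ produces components of the form $\langle R(X,Y)Z,U\rangle$, and O'Neill's formula for these (Besse~9.28, with $T\equiv 0$) involves $(\nabla_Z A)_X Y$, a horizontal covariant derivative of $A$ and hence $\nabla Rm_g$. So $|sec_h|\leq B^0(A^0,A^1,n)$ rather than $B^0(A^0,n)$. This is harmless for the lemma as stated (which permits $B=B(A,n)$ to depend on the whole sequence $\{A\}$ and tacitly requires $k\geq 1$ for $\{B\}^{k-1}_0$ to make sense), and it is in any case subsumed by the correct accounting you give in your second step. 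The compact-closure argument is fine.
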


\begin{lemma}
If $(M_{i},g_{i})$ is a precompact sequence with respect to the Gromov Hausdorff distance then $(FM_{i},h_{i},O(n))$ is precompact with respect to the equivariant Gromov Hausdorff topology.
\end{lemma}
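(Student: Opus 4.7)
The plan is to reduce equivariant Gromov--Hausdorff (eGH) precompactness of $(FM_i, h_i, O(n))$ to two easier statements: (a) ordinary Gromov--Hausdorff (GH) precompactness of the total spaces $(FM_i, h_i)$, and (b) a standard compactness criterion for actions by a fixed compact Lie group by isometries.

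For (a), I would use that by construction of $h_i$ the projection $\pi_i : (FM_i, h_i) \to (M_i, g_i)$ is a Riemannian submersion, hence $1$-Lipschitz, whose fibers are isometrically embedded totally geodesic copies of $(O(n), g_{\mathrm{bi\text{-}inv}})$, with a uniform diameter bound $D = \mathrm{diam}(O(n), g_{\mathrm{bi\text{-}inv}})$. Fix basepoints $p_i \in M_i$ and arbitrary lifts $\tilde p_i \in FM_i$. Given $R, \varepsilon > 0$, GH precompactness of $(M_i, p_i)$ supplies a uniform $N(R, \varepsilon/2)$ covering of $B_R(p_i) \subset M_i$ by $\varepsilon/2$-balls centered at points $\{x_i^\alpha\}$, and compactness of $O(n)$ supplies a uniform $N(\varepsilon/2)$ covering by $\varepsilon/2$-balls centered at $\{g^\beta\}$. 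For any $\tilde q \in B_{R+D}(\tilde p_i)$, the base point $\pi_i(\tilde q)$ lies within $\varepsilon/2$ of some $x_i^\alpha$; horizontally lifting a minimizing geodesic from $x_i^\alpha$ to $\pi_i(\tilde q)$ transports a chosen frame $f_i^\alpha \in \pi_i^{-1}(x_i^\alpha)$ to a point $\tilde q' \in \pi_i^{-1}(\pi_i(\tilde q))$ with $d(\tilde q', \tilde q) = g^\beta \cdot f_i^\alpha \cdot (\text{small})$ via the fiber-isometric $O(n)$ action. Combined, the points $\{g^\beta \cdot f_i^\alpha\}$ form an $\varepsilon$-net of $B_R(\tilde p_i)$ of cardinality at most $N(R, \varepsilon/2)\cdot N(\varepsilon/2)$, uniformly in $i$. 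Gromov's compactness theorem then yields GH precompactness of $(FM_i, h_i, \tilde p_i)$.

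For (b), the $O(n)$-action on each $FM_i$ is by isometries and the group is the same fixed compact Lie group for every $i$. Consequently the family of actions is uniformly equicontinuous and uniformly bounded, so along any GH convergent subsequence $FM_i \to F_\infty$ the isometric $O(n)$-actions subconverge (via Arzel\`a--Ascoli on each bounded region) to an isometric $O(n)$-action on $F_\infty$. This is exactly the standard eGH compactness statement (as in Fukaya's setup) when the group is a fixed compact group acting isometrically, and it converts the GH precompactness of (a) into eGH precompactness of $(FM_i, h_i, O(n), \tilde p_i)$.

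The only nonroutine step is the covering estimate in (a), where one has to use the Riemannian submersion structure to lift a net in $M_i$ to a net in $FM_i$ via horizontal transport; the rest is a direct application of Gromov's compactness theorem and the standard eGH compactness criterion for actions of a fixed compact group.
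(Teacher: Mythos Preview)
Your proposal is correct and follows the same approach as the paper: the paper's own proof is literally a one-line pointer to ``an easy $\epsilon$-density argument,'' and you have supplied precisely that argument in detail. The only blemish is an ill-formed sentence in step (a) (the expression ``$d(\tilde q', \tilde q) = g^\beta \cdot f_i^\alpha \cdot (\text{small})$'' is not meaningful as written), but the intended estimate---horizontal transport from $f_i^\alpha$ to the fiber over $\pi_i(\tilde q)$ costs at most $\varepsilon/2$, and then the fiber-isometric $O(n)$-action approximates $\tilde q$ within another $\varepsilon/2$---is clear and correct.
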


The first lemma is a direct computation while the second follows from an easy $\epsilon$-density argument (see \cite{BBI} for standard precompactness results).  It follows that if $(M_{i},g_{i})\stackrel{GH}{\rightarrow}(X,d_{X})$ then, after possibly passing to a subsequence, there exists $(Y,d_{Y},O(n))$ such that $(FM_{i},h_{i},O(n))\stackrel{eGH}{\rightarrow}(Y,d_{Y},O(n))$ and hence $(Y,d_{Y})/O(n) \approx (X,d_{X})$.  In fact we have the following

\begin{lemma}[\cite{F1}]\label{lem_frame_lim}
Let $(M^{n}_{i},g_{i},p_{i})$ be smooth complete $n$-dimensional Riemannian Manifolds that are $\{A\}^{\infty}_{0}$-regular at $p_{i}$ and let $r < \frac{\pi}{\sqrt{A^{0}}}$.  Assume $(B_{r}(p_{i}),g_{i})\stackrel{GH}{\rightarrow} (X,d)$, a metric space.  Then $(FB_{r}(p_{i}),h_{i},O(n))\stackrel{eGH}{\rightarrow}(Y,h_{\infty},O(n))$ where $(Y,h_{\infty})$ is a smooth Riemannian manifold which is $\{C\}^{\infty}_{0} = \{C(A,n)\}^{\infty}_{0}$-regular.
\end{lemma}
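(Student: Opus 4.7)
The plan is to deduce the lemma from the uniform curvature bounds of Lemma~\ref{lem_frame_reg} together with the $O(n)$-equivariant structure on the frame bundles and an equivariant version of Cheeger-Gromov compactness of the type developed in \cite{F1}.

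Two ingredients drive the argument. First, by Lemma~\ref{lem_frame_reg} the sequence $(FB_r(p_i),h_i)$ is uniformly $\{B\}^\infty_0$-regular with $B=B(A,n)$, giving pointwise bounds on the sectional curvatures and all covariant derivatives of the curvature tensor. Second, the $O(n)$-action on each $FM_i$ is free and isometric, the orbits are totally geodesic with vanishing second fundamental form, and any two orbits are isometric to the same fixed copy of $O(n)$ with the bi-invariant metric. Thus although $(FM_i,h_i)$ may itself collapse---for example whenever $(M_i,g_i)$ collapses---the orbits never do, and any collapse is forced to take place transverse to them. Applying the equivariant Cheeger-Gromov compactness theorem for a sequence of Riemannian manifolds with uniformly bounded geometry admitting a free, isometric action of a fixed compact Lie group, we obtain, after passing to a subsequence, $(FB_r(p_i), h_i, \tilde p_i, O(n))\stackrel{eGH - C^\infty}{\rightarrow}(Y, h_\infty, \tilde p_\infty, O(n))$ with $(Y,h_\infty)$ a smooth pointed Riemannian manifold carrying a smooth isometric $O(n)$-action whose orbits are again isometric to bi-invariant $O(n)$. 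The uniform $\{B\}^\infty_0$-bounds pass to $\{C\}^\infty_0$-regularity of $(Y,h_\infty)$ for a constant $C=C(A,n)$.

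The main obstacle is the smoothness of the limit $Y$. Classical Cheeger-Gromov compactness does not apply directly to $(FM_i,h_i)$, as one has no uniform lower bound on its injectivity radius. The free isometric $O(n)$-action is precisely what replaces this missing hypothesis: by an $O(n)$-equivariant slice theorem it supplies uniform transverse structure around every orbit, and it forces any residual collapse to be compatible with the $O(n)$-symmetry. In the quotient $X=Y/O(n)$ the residual collapsed directions, together with any isotropy of the $O(n)$-action on $Y$, are what produce the singularities of $X$; but $Y$ itself remains smooth. The detailed proof of this, using local $O(n)$-equivariant slice models together with a bootstrap of the smoothing procedure, is essentially Fukaya's original argument in \cite{F1}.
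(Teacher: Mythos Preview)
The paper does not provide its own proof of this lemma; after stating it, the authors simply write ``See \cite{F1} for details.'' Your sketch correctly identifies the key ingredients---uniform curvature bounds on the frame bundle from Lemma~\ref{lem_frame_reg}, together with the free isometric $O(n)$-action whose fixed-geometry orbits prevent collapse in the fiber direction and force the limit to be a smooth manifold---and appropriately defers the technical execution to \cite{F1}, which is exactly what the paper does.
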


See \cite{F1} for details.

The following simple lemma will be useful for studying the injectivity radius of a Riemannian manifold.

\begin{lemma}\label{lem_inj}
Let $(M,g)$ be a Riemannian manifold with $|sec|\leq 1$, $p\in M$ and $r<\pi$ such that $B_{r}(p)$ has compact closure in $M$.  Then if it holds that for every closed curve $\gamma$ at $p$ with $|\gamma| < r$ we have a one parameter family of closed curves $\gamma_{t}$ with $|\gamma_{t}|<2r$, $\gamma_{0}(s)=\gamma(s)$ and $\gamma_{1}(s)=p$, then $inj_{M}(p)\geq r/2$.
\end{lemma}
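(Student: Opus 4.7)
The plan is to prove the contrapositive: assuming $inj_M(p) < r/2$, I will exhibit a closed curve at $p$ of length less than $r$ that admits no contraction through loops of length less than $2r$. Since $|sec|\le 1$, Rauch comparison gives that the conjugate radius at $p$ is at least $\pi$, so $inj_M(p) < r/2 < \pi/2$ is strictly less than the conjugate radius. Klingenberg's lemma then produces a geodesic loop $\gamma(s) = \exp_p(sv)$, $s \in [0,1]$, based at $p$ with $\exp_p(v) = p$ and $L := |v| = 2\, inj_M(p) < r < \pi$.

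Suppose for contradiction that $\gamma$ admits a homotopy $\{\gamma_t\}_{t\in[0,1]}$ of loops at $p$ with $|\gamma_t| < 2r$, $\gamma_0 = \gamma$, and $\gamma_1 \equiv p$. The core idea is to lift the whole homotopy to $T_pM$ via $\exp_p$. On $B_\pi(0) \subset T_pM$ the conjugate radius bound makes $\exp_p$ a local diffeomorphism, so the pullback metric $\exp_p^{\ast} g$ is well-defined there and its radial geodesics from the origin are Euclidean straight lines; in particular the pullback distance from $0$ coincides with the Euclidean distance on $B_\pi(0)$. Using path-lifting under the local diffeomorphism $\exp_p$, I lift each $\gamma_t$ to a path $\tilde\gamma_t : [0,1] \to T_pM$ with $\tilde\gamma_t(0) = 0$, jointly continuous in $(s,t)$. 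The endpoint $v_t := \tilde\gamma_t(1)$ satisfies $\exp_p(v_t) = p$, so it lies in the fiber $\Omega := \exp_p^{-1}(p) \cap B_\pi(0)$, which is discrete since $\exp_p$ is a local diffeomorphism on $B_\pi(0)$; continuity in $t$ thus forces $v_t$ to be constant. But the lift of $\gamma_0$ is the radial segment $\tilde\gamma_0(s) = sv$, giving $v_0 = v \ne 0$, while $\gamma_1 \equiv p$ lifts to the constant path at $0$, giving $v_1 = 0$. This is the desired contradiction.

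The main obstacle is verifying that each lift $\tilde\gamma_t$ actually stays inside $B_\pi(0)$; this is where the hypothesis $|\gamma_t| < 2r$ enters essentially. When $2r \le \pi$ the conclusion is immediate, since a curve from $0$ with pullback length less than $\pi$ necessarily remains in the Euclidean $B_\pi(0)$ (pullback and Euclidean distances from $0$ agreeing on that ball). For the full range $r < \pi$ I would run an openness/closedness argument on the subset of $t \in [0,1]$ for which $\tilde\gamma_t$ exists in $B_\pi(0)$: openness follows from path-lifting for the local diffeomorphism $\exp_p$, while closedness uses compactness of $\overline{B_r(p)}$ together with the discreteness of $\Omega$ to prevent the lifted curves from accumulating on $\partial B_\pi(0)$. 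Once this technicality is dispatched, the endpoint argument above closes out the proof.
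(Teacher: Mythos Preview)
Your approach is essentially the same as the paper's: lift the contracting homotopy through $\exp_p$ and track a point in a discrete fiber. The paper packages things slightly differently. Rather than invoking Klingenberg to produce a geodesic loop, it directly assumes $\exp_p$ fails to be injective on $B_{r/2}(0)$, takes $x\neq y$ there with $\exp_p(x)=\exp_p(y)$, and lets $\gamma=\gamma_x\sqcup\gamma_y^{-1}$ be the concatenation of the two radial geodesics (length $<r$). It then reparametrizes the homotopy square so that it becomes a one-parameter family of paths from $p$ to $\exp_p(x)$ with $\tilde\gamma_0=\gamma_x$ and $\tilde\gamma_1=\gamma_y$, and tracks the lifted endpoint inside $\exp_p^{-1}(\exp_p(x))$ rather than inside $\exp_p^{-1}(p)$. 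These are cosmetic differences.

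The substantive divergence is in how the lifting is handled. The paper's key observation is that since each $\gamma_t$ is a loop based at $p$ of length $<2r$, one has $\mathrm{Image}(\gamma_t)\subseteq B_r(p)=\exp_p(B_r(0))$; the entire homotopy therefore sits in $B_r(p)$, and the paper lifts it through the local diffeomorphism $\exp_p:B_r(0)\to B_r(p)$ directly, never needing to compare $2r$ with $\pi$. You instead try to lift inside $B_\pi(0)$, which forces the case split at $2r=\pi$. Your proposed closedness argument for $r>\pi/2$ does not work as stated: compactness of $\overline{B_r(p)}$ and discreteness of $\Omega=\exp_p^{-1}(p)$ say nothing about whether a lifted curve can run out to $\partial B_\pi(0)$, since only the endpoint of the lift is constrained to lie in $\Omega$ while intermediate points are unconstrained. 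The missing ingredient is precisely the paper's observation that the homotopy has image in $B_r(p)$; once you use that and work with $\exp_p\vert_{B_r(0)}$ rather than $\exp_p\vert_{B_\pi(0)}$, the dichotomy you set up disappears.
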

\begin{proof}
The map $exp_{p}:B_{r}(0)\rightarrow B_{r}(p)$ is certainly a local diffeomorphism.  Assume $x,y\in B_{r/2}(0)$ is such that $exp_{p}(x)=exp_{p}(y)$, and consider the closed curve which is the adjoinment $\gamma = \gamma_{x}\sqcup\gamma_{y}$ where $\gamma_{x}(s)=exp_{p}(s\cdot x)$ and $\gamma_{y}(s)=exp_{p}(s\cdot y)$.  By assumption there exists $\gamma_{t}$ with $|\gamma_{t}|< 2r$ and $\gamma_{1}(s)=p=\gamma_{t}(1)=\gamma_{t}(0)$, and hence $Image(\gamma_{t}(s))\subseteq exp_{p}(B_{r}(0))$.  But by simply reparametrizing the interval $[0,1]\times[0,1]$ we may view this as a one parameter family of curves $\tilde\gamma_{t}(s)$ with $\tilde\gamma_{0}=\gamma_{x}$, $\tilde\gamma_{1}=\gamma_{y}$, $\tilde\gamma_{t}(0)=p$ and $\tilde\gamma_{t}(1)=\gamma_{x}(1)$.  Since we also have $Image(\tilde\gamma_{t}(s))\subseteq exp_{p}(B_{r}(0))$ we can lift uniquely with $\gamma_{t}(0)$ lifting to $0$.  However then $\gamma_{0}(s)$ must lift to $s\cdot x$, $\gamma_{1}(s)$ to $s\cdot y$, and $\gamma_{t}(1)\in exp_{p}^{-1}(p)$.  Since $exp$ is a local diffeomorphism this tells us $x=y$.  Hence the exponential map is one to one as claimed.
\end{proof}

The above is of course closely related to Gromov's notion of the pseudofundamental group, and our main application of it is to prove the following lemma:

\begin{lemma}\label{lem_inj_sub}
Let $(M,g)$ be a Riemannian manifold, $S\subseteq M$ a complete submanifold and $r<\pi$ such that $B_{r}(S)$ has compact closure in $M$.  Assume $|sec_{M}|, |T_{S}|\leq 1$, where $T_{S}$ is the second fundamental form of $S$.  Then there exists a universal $C>0$ such that if $p\in S$ with $inj_{S}(p) > r$ and the normal injectivity radius of $S$ in $M$ satisfies $inj_{M}(S)>r$, then $inj_{M}(p)>C r$.
\end{lemma}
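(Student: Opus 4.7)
The plan is to deduce the result from Lemma \ref{lem_inj}. Concretely, I will exhibit a universal constant $c>0$ so that every closed loop $\gamma$ based at $p$ with $|\gamma|<2cr$ admits a one-parameter family of closed loops contracting it to the constant loop at $p$, each of length at most $4cr$; then Lemma \ref{lem_inj} applied at radius $2cr$ yields $inj_M(p)\geq cr$, which is the desired conclusion.

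The contraction is built in two stages using the Fermi-type structure afforded by $S$. Because the normal injectivity radius satisfies $inj_M(S)>r$, the normal exponential map is a diffeomorphism from the open $r$-disk bundle of $\nu S$ onto $B_r(S)$, and in particular the nearest-point projection $\pi\colon B_r(S)\to S$ is smooth. Taking $c$ small enough that $2cr<r/2$, any loop $\gamma$ at $p$ with $|\gamma|<2cr$ stays inside $B_{cr}(p)\subseteq B_r(S)$, so we may write $\gamma(s)=\exp^{M}_{\pi(\gamma(s))}(\nu(s))$ for a unique smooth normal field $\nu(s)$ along $\pi\circ\gamma$ with $|\nu(s)|\leq cr$. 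Stage one retracts $\gamma$ onto $S$ via
\[
H(t,s):=\exp^{M}_{\pi(\gamma(s))}\bigl((1-t)\nu(s)\bigr).
\]
Since $p\in S$, one has $\nu(0)=\nu(1)=0$ and $\pi(p)=p$, so each $H(t,\cdot)$ is a closed loop based at $p$, with $H(0,\cdot)=\gamma$ and $H(1,\cdot)=\pi\circ\gamma\subset S$. Stage two contracts $\pi\circ\gamma$ inside $S$ radially from $p$: after verifying that $|\pi\circ\gamma|<inj_S(p)$, set
\[
\widetilde H(t,s):=\exp^{S}_{p}\bigl((1-t)(\exp^{S}_{p})^{-1}(\pi(\gamma(s)))\bigr),
\]
which is a homotopy of loops based at $p$ inside $S$ from $\pi\circ\gamma$ to the constant loop. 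Concatenating $H$ followed by $\widetilde H$ gives the desired contraction of $\gamma$ through loops based at $p$.

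It remains to control lengths. For stage one, $\partial_s H(t,s)$ is a Jacobi field along the normal geodesic $u\mapsto\exp^{M}_{\pi(\gamma(s))}(u\nu(s))$ of length $\leq cr$; its initial value is $d\pi(\gamma'(s))$ and its initial covariant derivative is $\nabla^{M}_{\partial_s}\nu$, both of which are bounded by $|\gamma'(s)|$ up to a factor controlled by the second fundamental form bound $|T_S|\leq1$. Jacobi field comparison under $|\sec_M|\leq1$ then gives $|H(t,\cdot)|\leq(1+\epsilon_1(cr))|\gamma|$ with $\epsilon_1(cr)\to0$ as $cr\to0$. For stage two, the Gauss equation yields $|\sec_S|\leq|\sec_M|+|T_S|^2\leq2$, and the analogous Jacobi field estimate inside $S$ along radial geodesics of length $\leq|\pi\circ\gamma|\leq(1+\epsilon_1)|\gamma|<2cr$ gives $|\widetilde H(t,\cdot)|\leq(1+\epsilon_2(cr))|\pi\circ\gamma|$. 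Combining, each loop in the concatenated homotopy has length at most $(1+\epsilon_1)(1+\epsilon_2)|\gamma|$. Choosing $c$ to be a universal constant small enough that $(1+\epsilon_1(cr))(1+\epsilon_2(cr))\leq2$ for all $r<\pi$ (which is possible since $cr\leq c\pi$), the hypothesis of Lemma \ref{lem_inj} at radius $2cr$ is verified.

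The genuinely nontrivial step is the Jacobi field bookkeeping in stage one, where $\nabla^{M}_{\partial_s}\nu$ mixes the tangential motion of $\pi\circ\gamma$ against the normal data, and must be handled using the second fundamental form bound; once this is in place the remaining ingredients (Gauss equation, radial contraction in $S$, and the application of Lemma \ref{lem_inj}) are entirely standard.
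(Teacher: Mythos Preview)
Your proposal is correct and follows essentially the same approach as the paper: a two-stage contraction (first retract onto $S$ via the normal exponential map, then contract radially in $S$ using $\exp^S_p$), with length control from the curvature and second fundamental form bounds, followed by an application of Lemma \ref{lem_inj}. The paper's proof is terser about the length estimates, simply asserting $|\gamma_t|\leq\frac{4}{3}|\gamma|$ in each stage from the uniform bounds, whereas you spell out the Jacobi field comparison; but the underlying argument is the same.
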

\begin{proof}
By scaling it is enough to assume $r=1$.  So let $\gamma$ be a closed curve at $p$ in $M$ of length $|\gamma|\leq l$, $l$ to be chosen.  If $l<2$ then because of the normal injectivity radius bound on $S$ we can use the normal exponential map $exp_{S}$ to define $\gamma_{t}(s)=exp_{S}((1-2t)exp^{-1}_{S}(\gamma(s)))$.  Hence $\gamma_{0}=\gamma$, $\gamma_{1/2}\subseteq S$ and because the curvatures of $M$ and second fundamental form of $S$ are uniformly bounded we have that if $l$ is sufficiently small then $|\gamma_{t}|\leq \frac{4}{3}|\gamma| \leq \frac{4}{3}l$.  Now $\gamma_{1/2}$ is a closed curve in $S$ at $p$, and so since we have a lower injectivity radius bound in $S$ at $p$ we have that if $|\gamma_{1/2}| < 2$ then we can similarly use the exponential map in $S$ at $p$ to define $\gamma_{t}(s)=exp_{p}((2-2t)exp^{-1}_{p}(\gamma_{1/2}(s)))$.  The curvature of $S$ with the induced metric is uniformly bounded because the second fundamental form is uniformly bounded, so again for $l$ small we then have that for $t\in[\frac{1}{2},1]$ that $|\gamma_{t}|\leq \frac{4}{3}|\gamma_{1/2}|< 2|\gamma| \leq 2 l$.  Since $\gamma$ was arbitrary up to the bound on its length we have by the Lemma \ref{lem_inj} that there is a uniform lower bound on the injectivity radius of $M$ at $p$.
\end{proof}

Now we can prove the main result of this section:

\begin{proof}[Proof of Theorem \ref{thm_main_collapse}]
Let $(M_{i},g_{i},p_{i})\stackrel{GH}{\rightarrow} (X,d,p_{\infty})$ be as in the statement of the theorem.  First we prove Theorem \ref{thm_main_collapse} $(1)$ and find $\epsilon>0$ and $U_{i}\supseteq B_{2\epsilon}$ with the desired lower injectivity radius bounds on $\tilde U_{i}$.  So assume no uniform $\epsilon$ exists, then in particular after passing to a subsequence we can assume the statement fails for $\epsilon_{i}=\frac{1}{i}$.  For $i$ sufficiently large we will show a contradiction.

Now for $r=\frac{1}{\sqrt A^{0}}$ we have that $(B_{3r}(p_{i}),g_{i})\stackrel{GH}{\rightarrow} (B_{3r}(p_{\infty}),d)$.  If we fix some $\delta = \frac{r}{100}$ we can let $\tilde g_{i}$ be smooth metrics on $B_{2r}(p_{i})$ as in \cite{A} such that $e^{-\delta}g_{i}<\tilde g_{i}<e^{\delta}g_{i}$, $|\nabla^{g_{i}}-\nabla^{\tilde g_{i}}|_{C^{0}}< \delta$, $|Rm[\tilde g_{i}]|\leq 2A^{0}$, and $|\nabla^{j}Rm[\tilde g_{i}]| < C(n,j,\delta)$ for $\forall$ $j$.  Let $(FB_{2r}(p_{i}),\tilde h_{i})$ be the frame bundle above $B_{2r}(p_{i})$ with metric $\tilde h_{i}$ as in Definition \ref{def_frame_metr} with respect to the perturbed metric $\tilde g_{i}$ on $B_{2r}(p_{i})$.  By Lemmas \ref{lem_frame_reg} and  \ref{lem_frame_lim} we have that there exists $\{B\}_{0}^{\infty}=\{B(n,A^{0})\}_{0}^{\infty}$ such that $((FB_{2r}(p_{i}),\tilde h_{i})$ is $\{B\}_{0}^{\infty}$-regular and such that after possibly passing to a subsequence we have $(FB_{2r}(p_{i}),\tilde h_{i},O(n))\stackrel{eGH}{\rightarrow}(Y_{\infty},\tilde h_{\infty},O(n))$, where $(Y_{\infty},\tilde h_{\infty})$ is also a $\{B\}_{0}^{\infty}$-regular Riemannian manifold.

Now let $(Y_{i},\tilde h_{i}) = ((FB_{r}(p_{i}),\tilde h_{i})$ be the portion of the frame bundle above $B_{r}(p_{i})$ and let $\iota < min\{r,inj(Y_{\infty})\}$.  Then by mollifying equivariant Gromov-Hausdorff maps between $Y_{i}$ and $Y_{\infty}$ it follows from \cite{CFG} that for large $i$ there exist $O(n)$-equivariant maps $f_{i}:Y_{i}\rightarrow Y_{\infty}$ which are almost Riemannian submersions such that there exists $C=C(n,B,\iota)$ such that $|\nabla^{2}f_{i}|\leq C$.  In particular for all $i$ sufficiently large we may assume $\frac{1}{C}|X|\leq |df_{i}[X]| \leq C|X|$ for all $x\in Y_{i}$ and for all horizontal vectors $X$ (recall that a horizontal vector at $x\in Y_{i}$ is by definition one which is perpendicular to the fiber tangent of $f^{-1}_{i}(f_{i}(x))$).  We also have the property that the level sets of $f_{i}$ are connected with diameter tending to zero as $i\rightarrow\infty$.  Let $\tilde p_{i}\in Y_{i}$ be a lift of $p_{i}$ and $\tilde p_{\infty}=lim_{i\rightarrow\infty}f_{i}(\tilde p_{i})\in Y_{\infty}$ (after passing to a further subsequence).

Now the second fundamental form of $O(n)\cdot\tilde p_{\infty} \equiv \tilde S_{\infty}$ has some uniform bound $D>0$. Let us define $\tilde S_{i} \equiv f_{i}^{-1}(O(n)\cdot\tilde p_{\infty})$, which are also invariant under the $O(n)$ action on $Y_{i}$.  Because of our bounds on $f_{i}$ and because $\tilde S_{\infty}$ has uniform bounds on its second fundamental form it is a small calculation to check that, after possibly modifying $D$, the second fundamental form of $\tilde S_{i}$ is also uniformly bounded by $D$ with respect to the metric $\tilde h_{i}$.

Now let $\iota_{0}=min\{r,inj_{Y_{\infty}}(\tilde S_{\infty})\}$, where $inj_{Y_{\infty}}(\tilde S_{\infty})$ is the normal injectivity radius of the surface $\tilde S_{\infty}$.  The claim is that there exists $\epsilon_{0} = \epsilon_{0}(n,D,\iota_{0})$ such that the normal injectivity radius of $\tilde S_{i}$ satisfies $inj_{Y_{i}}(\tilde S_{i})>\epsilon_{0}$.  To see this notice that because the curvature of $Y_{i}$ is bounded by $B^{0}$ and the second fundamental form of $\tilde S_{i}$ is bounded by $D$ that if $inj(\tilde S_{i})$ is sufficiently small, depending on just $B^{0}$ and $D$, that there must be a geodesic $\tilde\gamma$ with length $|\tilde\gamma|=l$ small, such that $\tilde\gamma(0),\tilde\gamma(l)\in \tilde S_{i}$ and $\dot{\tilde\gamma}(0),\dot{\tilde\gamma}(l)$ are horizontal to $\tilde S_{i}$.  If $\gamma(t) \equiv f_{i}(\tilde\gamma(t))$ is the projected curve in $Y_{\infty}$ then because $|\dot{\tilde\gamma}| (t)=1$ we have $|\dot\gamma|(t)\leq C$, and since $\dot{\tilde\gamma} (0)$ is perpendicular to $\tilde S_{i}$ we have that $|\dot\gamma|(0)\geq 1/C$ and is nearly horizontal to $O_{f_{i}(\tilde p_{i})}$.  If we let $\phi(t)=d_{\tilde h_{\infty}}(\tilde S_{\infty},\gamma(t))$ be the distance function to $\tilde S_{\infty}$, then if $l<2\iota_{0}$ we have that $\phi(t)$ is smooth for $t\in (0,l)$.  Our conditions on $\gamma(0)$ and $\dot\gamma(0)$ guarantee that $\phi(0)=0$ and $|\dot\phi|(0)\geq \tilde C=\tilde C(C)$, and because $\tilde\gamma$ is a geodesic in $Y_{i}$ and $|\nabla^{2}f_{i}|\leq C$ we have that we can alter $\tilde C$ such that the geodesic curvature of $\gamma$ satisfies $|\nabla_{\dot\gamma}\dot\gamma|(t)\leq \tilde C^{-1}$.  Because the curvature of $Y_{\infty}$ is bounded by $B^{0}$ and the second fundamental form of $\tilde S_{\infty}$ is bounded by $D$, we have the distance function to $\tilde S_{\infty}$ has uniform lower hessian bounds at each interior point $\gamma(t)$, and combining this with our geodesic curvature bounds on $\gamma(t)$ we see that we can pick $\tilde C$ such that $\frac{d^{2}}{dt^{2}}\phi(t)\geq -\tilde C$ for all $t\in (0,l)$.  In particular, since $\phi(0)=\phi(l)=0$ and $|\dot\phi|(0)\geq \tilde C$, we have a lower bound on $l$ by some $\epsilon_{0}$.

So far our construction has been on $Y_{i}$, so to descend to $M_{i}$ we now notice that since our smooth submanifold $\tilde S_{i}$ is $O(n)$ invariant it defines, through the $O(n)$ quotient map, a smooth submanifold $S_{i}$ of $M_{i}$.  Since $S_{i} = O(n) \backslash (O(n)\cdot f_{i}^{-1}(\tilde p_{\infty}))$ and $diam  f_{i}^{-1}(\tilde p_{\infty})\rightarrow 0$ as $i\rightarrow\infty$, we have that the diameter of $S_{i}$ tends to zero as $i$ tends to infinity. Since the quotient map $Y_{i}\stackrel{O(n)}{\rightarrow}B_{r}(p_{i})$ is a Riemannian submersion the second fundamental form bound on $\tilde S_{i}$ tells us the second fundamental form of $S_{i}$ is also bounded by $D$, and further that since $inj_{Y_{i}}(\tilde S_{i})>\epsilon_{0}$ we have that the normal injectivity radius of $S_{i}$ satisfies $inj_{M_{i}}(S_{i})>\epsilon_{0}$.  Now these estimates hold with respect to the perturbed metric $\tilde g_{i}$, but because $g_{i}$ and $\tilde g_{i}$ are $C^{1}$ close, closeness depending only on $A^{0}$,  we can modify $D$ and $\epsilon_{0}$ such that they hold for $g_{i}$ as well.  Now we define $U_{i} \equiv B_{\epsilon_{0}}(S_{i})$, and so $U_{i}$ is in fact diffeomorphic to a vector bundle over $S_{i}$ (compare \cite{F3}).  The uniform curvature and second fundamental form bounds on $S_{i}$ tells us the curvature of the induced metric on $S_{i}$ also has uniform curvature bounds.  Hence by \cite{R} we have that for large $i$ that $S_{i}$ is an infranil manifold, and in particular for some simply connected nilpotent Lie group $N_{i}$ we have that the fundamental group of $S_{i}$ satisfies $\Lambda_{i}\leq N_{i}\rtimes Aut(N_{i})$ with $|\Lambda_{i}/(\Lambda_{i}\cap N_{i})|\leq w(n)$.  Now let $\bar S_{i}$ be the lift of $S_{i}$ in $\tilde U_{i}$, which is itself isometric to the universal cover of $S_{i}$, and the claim is that there is a lower injectivity radius bound of $(\tilde U_{i},g_{i})$ in $B_{\epsilon_{0}/2}(\bar S_{i}) \subseteq \tilde U_{i}$.  To see this let $q\in \bar S_{i}$ be arbitrary, then if we can show a lower bound of $inj_{\tilde U_{i}}(q)$ at any such $q$ we will have established the claim because the curvature of $\tilde U_{i}$ are uniformly bounded and $B_{\epsilon_{0}}(\bar S_{i}) \subseteq \tilde U_{i}$.  Now it follows from \cite{BK} that for all $i$ sufficiently large that $\bar S_{i}$ also has a uniform injectivity radius bound by some $\epsilon_{0}$ (modify $\epsilon_{0}$).  But now we can just apply lemma \ref{lem_inj_sub} to see that there is a lower injectivity radius bound of $\tilde U_{i}$ at $q$ because $inj_{\tilde U_{i}}(\bar S_{i}) \geq inj_{U_{i}}(S_{i}) > \epsilon_{0}$.  Finally since $\tilde p_{i}\rightarrow \tilde p_{\infty}$ we see that for $i$ sufficiently large that $\tilde U_{i}$ satisfies \ref{thm_main_collapse} $(1)$ with $\frac{\epsilon_{0}}{2}>\frac{1}{i}$.

Now we show the structure of limit and estimate the isotropy group.  After passing to a subsequence we can pick $U_{i}$ as above.  Let $q_{i}\in S_{i}$ and notice that $d(p_{i},q_{i})\rightarrow 0$.  It follows from the lower injectivity radius bound  and because $g_{i}$ are $\{A\}_{0}^{k}$-regular that we can now pass to a subsequence such that if $\bar p_{i},\bar q_{i}\in \tilde U_{i}$ are lifts of $p_{i}$ and $q_{i}$ with $d(\bar p_{i},\bar q_{i})=d(p_{i},q_{i})$ then $(\tilde U_{i},g_{i}, \Lambda_{i}, \bar q_{i})\stackrel{eGH - C^{k+1,\alpha}}{\rightarrow}(\tilde U_{\infty}, g_{\infty}, N, \bar q_{\infty})$, where $N$ is a closed subgroup of the isometry group of $g_{\infty}$.  Because there is a subgroup of $\Lambda_{i}$ (namely $\tilde\Lambda_{i} \equiv \Lambda_{i}\cap N_{i}$) of bounded index which is at most $n$-step nilpotent, it follows that there is a subgroup of $N$ of bounded index which is at most $n$-step nilpotent.  In particular the identity component of $N$ is nilpotent.  Now note that if $dim(S_{i})=0$ then $S_{i}$ is discrete and thus because of the normal injectivity radius bound on $S_{i}$ this implies a uniform lower injectivity radius bound on $M_{i}$.  Hence if we assume that $inj_{M_{i}}(p_{i})\rightarrow 0$ then $dim(\bar S_{i})=dim(S_{i})\geq 1$.  Notice also that $\bar S_{i}$ is invariant under the action of the fundamental group $\Lambda_{i}$, and further because the diameter of $S_{i}$ is tending to zero the action of $\Lambda_{i}$ on $\bar S_{i}$ maps $\bar q_{i}$ to increasingly dense subsets of $\bar S_{i}$.  The second fundamental form bounds on $\bar S_{i}$ and the uniform lower bound on the injectivity radius of $\bar S_{i}$ guarantee that $\bar S_{i}$ limits, at least on compact subsets of $\tilde U_{\infty}$, to a submanifold $\bar S_{\infty}\subseteq \tilde U_{\infty}$ which also satisfies $dim(\bar S_{\infty})\geq 1$.  But $N$ now acts transitively on $\bar S_{\infty}$, in particular $\bar S_{\infty} = N\cdot \bar q_{\infty} = N\cdot \bar p_{\infty}$ and because $N$ is closed in the isometry group we can identify $\bar S_{\infty} = N/I_{\bar p_{\infty}}$. Since $dim(\bar S_{\infty})>0$ we have $dim N >dim I_{\tilde p_{\infty}}$.  Further because $\bar S_{\infty}$ is connected, since $\bar S_{i}$ are, and $I_{\bar p_{\infty}}$ is compact we see that $N$ is at most a finite extension of its nilpotent identity component.

\end{proof}

\begin{remark}
In the above if $f_{i}$ were not only almost Riemannian submersions in the $C^{1}$ sense but in the $C^{2}$ sense (which is not guarenteed by \cite{CFG}) then we could see that not only is the normal injectivity radius of $S_{i}$ bounded from below, but as $i$ grows should be approaching the injectivity radius of $\tilde S_{\infty}$ in $Y_{\infty}$.  This will be a consequence of some estimates in part II of this paper.
\end{remark}

\section{Geometry of Toric Quotients}\label{sec_gtq}

Many of the techniques of this section are valid for arbitrary quotient geometries, however we will only derive sharp estimates for quotients by finite extensions of tori.  The main purpose of this section is to prove the following

\begin{theorem}\label{thm_main_torus}
Let $(M^{n},g)$ be an $n$-dimensional Riemannian Manifold with $C^{k,\alpha}$ metric and $G$ be an $l$ dimensional Lie Group with a faithful, proper, and isometric action on $M$. Assume for each $x\in M$ that the isotropy is a finite extension of a torus $I_{x} = \tilde T$ with $dim I_{x}\leq i$.  Let $B\subseteq M/G$ be the set of non-$C^{k,\alpha}$ orbifold points, then $dim_{Haus}B \leq min\{n-(l-i)-4,dim M/G-3\}$.
\end{theorem}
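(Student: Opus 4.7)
\textit{Proof plan.} The strategy is to reduce to a local linear model via the slice theorem and then carry out a stratified codimension count on the resulting torus quotient. For any $[p] \in M/G$, the slice theorem identifies a neighborhood of $[p]$ in $M/G$ with a neighborhood of the origin in $V/\tilde T$, where $V$ is the normal slice to the orbit at $p$ and $\tilde T = I_p$ is a finite extension of a torus $T$. Since $\dim V = n - l + \dim I_p \leq n - (l-i)$ (with equality at points of maximum isotropy dimension), a codimension-$4$ bound on the non-orbifold set in $V$ yields the first bound of the theorem, and a codimension-$3$ bound in $V/\tilde T$ itself yields the second.

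The linear action of $\tilde T$ on $V$ decomposes $V$ into weight subspaces for the identity component $T$, which induces a stratification by orbit type: a point $v = \sum v_\alpha$ has stabilizer $T_v = \bigcap_{v_\alpha \neq 0}\ker\chi_\alpha$ and normal slice $V_v$ on which $T_v$ acts linearly. Using the notion of a geometric reduction developed in this section, $[v]$ is a $C^{k,\alpha}$ orbifold point of $V/\tilde T$ precisely when the $T_v$-action on $V_v$ admits a geometric reduction to a finite group action (possibly after splitting off a free subtorus factor). Consequently, the non-orbifold set in $V/\tilde T$ is the image of those strata of $V$ at which no such reduction exists.

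The quantitative core is the claim that every faithful linear torus representation on $\mathds{R}^m$ with $m \leq 3$ admits a geometric reduction to a finite group. This is seen by direct classification: up to equivalence only the trivial action, $S^1$ rotating $\mathds{R}^2$ (giving the $\mathds{Z}_2$-quotient $\mathds{R}/\mathds{Z}_2$), and $S^1$ rotating a plane in $\mathds{R}^3$ (giving $\mathds{R}\times(\mathds{R}/\mathds{Z}_2)$) arise, and each is a smooth Riemannian orbifold with explicit reduction. Hence any non-orbifold stratum in $V$ must correspond to a slice $V_v$ of real dimension at least $4$ on which the isotropy acts as a positive-dimensional subtorus; combined with the isotropy-dimension constraint $\dim I_x \leq i$, this forces every such stratum to have codimension at least $4$ in $V$. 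Passing to $V/\tilde T$ contracts each stratum by the free subtorus dimension, which is at least $1$ at any non-orbifold point (otherwise the action would already be essentially finite and yield an orbifold), so the codimension drops by at most one, giving the codimension-$3$ bound in $V/\tilde T$. A stratified induction on $\dim V$ (using the recursive slice structure on each stratum) upgrades these pointwise codimension bounds to the claimed Hausdorff dimension estimate on $B$.

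The main obstacle is the combinatorial analysis of weight configurations combined with the base case: classifying faithful linear torus representations on $\mathds{R}^m$ for $m \leq 3$ and verifying that each yields a Riemannian orbifold with $C^{k,\alpha}$ metric via an explicit geometric reduction. Handling the finite-extension part of $\tilde T$ uniformly, so that the reduction descends from $T$ to $\tilde T$ without loss of regularity and so that the finite quotient interacts compatibly with the torus stratification, is a delicate point that must be checked in parallel with the stratified induction.
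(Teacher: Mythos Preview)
Your overall architecture matches the paper's: reduce via the geometric slice theorem to a linear quotient $(\mathds{R}^{m},h)/\tilde T$ with $m\le n-(l-i)$, verify the base case $m\le 3$ by explicit classification, and then run an inductive/stratified argument. Your codimension-$4$ bound in $V$ is also correct: at a non-orbifold stratum the effective normal slice $W$ (the non-fixed part of the slice under the isotropy torus $T_v$) must have $\dim W\ge 4$ by the base case, and $\dim W$ equals the codimension of the stratum in $V$.

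The gap is in your passage from codimension $4$ in $V$ to codimension $3$ in $V/\tilde T$. When you quotient, a stratum $S$ with isotropy torus $T_v$ of dimension $d_v$ contracts by the orbit dimension $\dim T-d_v$, while the ambient $V$ contracts by $\dim T$; hence the codimension in the quotient is $(\text{codim in }V)-d_v$. Your sentence ``the free subtorus dimension is at least $1$ \ldots so the codimension drops by at most one'' confuses the two quantities: orbit dimension $\ge 1$ only gives $d_v\le \dim T-1$, not $d_v\le 1$, and the parenthetical (finite isotropy would give an orbifold) shows $d_v\ge 1$ at non-orbifold strata, which goes the wrong way. The drop can genuinely exceed $1$: e.g.\ $T^{2}$ acting on $\mathds{C}^{3}$ with weights $(1,0),(0,1),(1,1)$ has $d_v=2$ at the origin.

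What actually saves the bound is a sharper count: since $T_v$ acts faithfully on $W\cong\mathds{R}^{c}$ with no nonzero fixed vectors, $d_v\le\lfloor c/2\rfloor$, and moreover if $d_v=\lfloor c/2\rfloor$ with $c$ even then $T_v$ is a \emph{maximal} torus in $O(c)$ and hence acts as a split (diagonal) action, which has a local Riemannian reduction and is therefore an orbifold point---a contradiction. This extra ``maximal torus is split'' observation is exactly what is needed for the critical case $c=4$ (forcing $d_v\le 1$); for $c\ge 5$ the crude bound $d_v\le\lfloor c/2\rfloor$ already suffices. The paper carries out essentially this analysis, though framed globally via the identity component $F_0$ of the subgroup fixing the bad set and a case split on the number $k$ of weight planes $F_0$ moves; the delicate $k=2$, $\dim F_0=2$ case is resolved by the same maximal-torus observation (their Lemma on split actions). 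You should replace your one-line codimension claim with this argument.

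Finally, your acknowledgment that the finite-extension step is ``delicate'' is accurate but understates it: the paper needs a separate regularity theorem (isometries between $C^{k,\alpha}$ Riemannian orbifolds are $C^{k+1,\alpha}$ in the orbifold sense) to push the finite quotient through, and this does not follow from the torus analysis.
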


We spend much of this section proving a Euclidean version of the above theorem:

\begin{prop}\label{prop_m_torus}
Let $(\mathds{R}^{m},g)$ be a $C^{k,\alpha}$ Riemannian manifold and assume $\widetilde{T}\leq O(m)$ is a finite extension of a torus which acts faithfully and isometrically on $(\mathds{R}^{m},g)$.  Then if $B\subseteq (\mathds{R}^{m},g)/\widetilde{T}$ is the set of non-$C^{k,\alpha}$ orbifold points, then $dim_{Haus}B \leq min\{m-4,dim \mathds{R}^{m}/\widetilde{T}-3\}$.
\end{prop}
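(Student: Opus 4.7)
\textbf{Proof plan for Proposition~\ref{prop_m_torus}.} The plan is to argue by induction on $m$, using the orbit-type stratification of $\mathds{R}^{m}$, the slice theorem, and the local-reduction machinery for toric quotients developed in Section~\ref{sec_gtq}. Since passing to the finite quotient $\widetilde{T}\to T=\widetilde{T}^{0}$ preserves both the orbifold property and all Hausdorff dimensions, it suffices to assume $\widetilde{T}=T$ is a connected torus.

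Stratify $\mathds{R}^{m}$ by the identity component of isotropy: for each subtorus $T'\leq T$, the set $M_{T'}=\{x:T_{x}^{0}=T'\}$ is a smooth $T$-invariant submanifold, and these form a locally finite stratification whose top stratum is open, dense, and maps to a smooth Riemannian submanifold of the quotient. Every non-orbifold point lies in the image of some non-principal stratum $M_{T'}$, so I would fix such a $p\in M_{T'}$ and apply the slice theorem: a $T$-invariant neighborhood of $T\cdot p$ is equivariantly diffeomorphic to $T\times_{T_{p}}N_{p}$, with $T_{p}$ (identity component $T'$) acting linearly and isometrically on the slice $N_{p}\cong\mathds{R}^{m-\dim T+\dim T'}$; the quotient near $\pi(p)$ is isometric to $N_{p}/T_{p}$. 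When $T'\subsetneq T$ this slice has ambient dimension strictly less than $m$, so the inductive hypothesis applied to $N_{p}$ with the residual action of $T_{p}$ controls the non-orbifold locus there. The remaining case is $T'=T$, i.e.\ points in the $T$-fixed set $F=(\mathds{R}^{m})^{T}$, where the slice representation of $T$ on $N_{p}F\cong\mathds{R}^{m}/F$ must be analyzed directly.

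Decompose $N_{p}F$ into $T$-weight spaces $V_{\chi_{1}}\oplus\cdots\oplus V_{\chi_{s}}$, each a real $2$-plane on which $T$ rotates by a nonzero character $\chi_{j}$. The key classification question, to be resolved using the reduction theory of Section~\ref{sec_gtq} and Appendix~\ref{sec_gq}, is which configurations $(\chi_{1},\dots,\chi_{s})$ admit a local reduction of the $T$-action to a finite group action — equivalently, for which configurations $(\bigoplus_{j} V_{\chi_{j}})/T$ is locally isometric to a Euclidean space modulo a finite group near the origin. Example~\ref{exam_sharp_est} (weights $(1,2)$) together with the diagonal $S^{1}$-action on $\mathds{C}^{2}$ show that if $s\geq 2$ and the weights are not of a very restrictive "smooth fan" type, no such reduction exists. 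The upshot is a necessary condition for non-orbifold-ness: the non-trivial part of the slice representation must have real dimension $\geq 4$, which forces $\dim M_{T}=\dim F\leq m-4$ and so $\dim B\cap\pi(F)\leq m-4$. Running the same analysis on the quotient side — each faithfully acting $S^{1}$-factor of $T$ drops the quotient dimension by one less than it drops the ambient dimension — yields the parallel bound $\dim B\leq\dim(\mathds{R}^{m}/\widetilde{T})-3$. Summing over the finitely many strata and invoking the inductive hypothesis on slices of smaller ambient dimension then closes the argument.

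The main obstacle I expect is the classification in the third paragraph: pinpointing exactly which weight configurations admit a local finite reduction — the standard $T^{2}$-action on $\mathds{C}^{2}$ giving the orbifold $\mathds{R}_{\geq 0}^{2}$ does, while both the diagonal and the $(1,2)$ weighted $S^{1}$-action on $\mathds{C}^{2}$ do not — and phrasing this uniformly as a statement about the character lattice that survives as $p$ ranges over all strata $M_{T'}$ simultaneously. The bookkeeping required to make both dimension bounds $m-4$ and $\dim X-3$ drop out of the same weight-lattice analysis is the delicate technical heart of the proof.
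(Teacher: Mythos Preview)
Your plan is essentially correct and uses the same core ingredients as the paper (induction on $m$, the slice theorem, the local Riemannian reduction machinery of Section~\ref{sec_gtq}, and Proposition~\ref{finite_ext_quot} to pass to the connected torus), but the inductive mechanism and the bookkeeping for the second bound differ in useful ways.

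For the bound $\dim B\le m-4$, the paper does not stratify by isotropy type; instead it writes $\mathds{R}^{m+1}\setminus\{0\}=\mathds{R}\times S^{m}$ and observes that a slice through any point of $S^{m}$ has dimension $\le m$, so Lemma~\ref{lRr_isotropy} together with the inductive hypothesis handles every nonzero point at once, and Lemma~\ref{lRr_product} passes the reduction from $S^{m}$ to $\mathds{R}\times S^{m}$. This avoids singling out the fixed-point stratum. More importantly, the ``main obstacle'' you anticipate --- a full classification of which weight configurations admit a local Riemannian reduction --- is \emph{not needed} for this bound: the only input beyond the machinery is the base case $m\le 3$ (Lemma~\ref{lem_tor_3}). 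Your orbit-type stratification also works, but you should recognise that the hard classification step never actually enters here.

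For the bound $\dim B\le \dim(\mathds{R}^{m}/T)-3$, the paper's argument is quite different from your sketch. Rather than tracking how each $S^{1}$-factor affects ambient versus quotient dimension, the paper lets $F_{0}\le T$ be the identity component of the subtorus fixing the bad set $\tilde B\subseteq\mathds{R}^{m}$, lets $k$ be the number of two-dimensional weight planes $V^{i}$ on which $F_{0}$ acts nontrivially, and balances the two inequalities $\dim(\tilde B/T)\le \dim(\mathds{R}^{m}/T)+k-4$ (from $\dim F_{0}\le k$ and $\dim\tilde B\le m-4$) and $\dim(\tilde B/T)\le \dim(\mathds{R}^{m}/T)-k$ (from $\tilde B\subseteq\mathrm{Fix}(F_{0})$). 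This handles $k\ne 2$ immediately; the residual case $k=2$, $\dim F_{0}=2$ forces $F_{0}$ to be the maximal torus on $V^{1}\oplus V^{2}$, which is split and hence has a local Riemannian reduction by Lemma~\ref{lem_tor_split}, allowing a further reduction. So the only piece of the ``classification'' actually invoked is that a split torus action admits a reduction --- not the converse, and not any delicate character-lattice statement. Your approach via slices can also be made to yield $\dim X-3$ (the key point being that the slice quotient $N_{p}/T_{p}$ has the \emph{same} dimension as $\mathds{R}^{m}/T$, since $T_{p}$ acts effectively on $N_{p}$), but your write-up does not isolate this observation, and your dimensional accounting as stated is too vague to close the argument.
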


The proof of the proposition will be done in several parts.  In Lemma \ref{lem_tor_3} we first prove it in the case when $m\leq 3$ and $\tilde T=T$ is a torus, so that $B$ is empty in this case.  With some of the tools of this section we will then be able to prove it for larger $m$ inductively, and by using the results of Appendix \ref{sec_orb} we will show the result still holds when we take a finite extension of $T$.

The next simple lemma will be useful when we move to studying stratified spaces.

\begin{lemma}
Let $(M,g)$, $(N,k)$ be Riemannian manifolds and $\pi:M\rightarrow N$ a Riemannian
submersion.  Define the semi-definite tensor $h$ on $M$ by $h(v,w)=g(p_{\mathcal{H}}(v),p_{\mathcal{H}}(w))$,
where $p_{\mathcal{H}}$ is the projection to the horizontal distribution.  Let $S\subseteq M$ be a submanifold
such that $\pi|_{S}$ is a diffeomorphism onto its image.  Then $h|_{S}$ is a Riemannian metric and
$\pi|_{S}:(S,h|_{S})\rightarrow \pi(S)$ is an isometry.
\end{lemma}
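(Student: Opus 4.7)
The plan is to prove this in two essentially elementary steps: first verify positive-definiteness of $h|_S$, then deduce the isometry statement directly from the definition of a Riemannian submersion. The key observation that makes everything go is that the kernel of $h$ as a semi-definite form on $TM$ is exactly the vertical distribution $\mathcal{V}$, i.e.\ $h(v,v)=0$ iff $v=p_{\mathcal{H}}(v)+p_{\mathcal{V}}(v)$ has $p_{\mathcal{H}}(v)=0$, which happens iff $v\in\mathcal{V}$.

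First I would show that $h|_S$ is a genuine Riemannian metric. Suppose $v\in T_xS$ with $h(v,v)=0$. By the above, $v\in \mathcal{V}_x=\ker d\pi_x$, so $d\pi_x(v)=0$. Since $\pi|_S:S\to\pi(S)$ is a diffeomorphism, its differential $d(\pi|_S)_x=d\pi_x|_{T_xS}$ is injective, forcing $v=0$. Thus $h|_S$ is positive definite; smoothness is automatic since $h$ itself is smooth on $M$, and symmetry is immediate from the definition.

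Next I would verify the isometry property. For any $v,w\in T_xS$, decompose $v=p_{\mathcal{H}}(v)+p_{\mathcal{V}}(v)$ and likewise for $w$. Since $d\pi$ annihilates the vertical distribution we have $d\pi_x(v)=d\pi_x(p_{\mathcal{H}}(v))$ and $d\pi_x(w)=d\pi_x(p_{\mathcal{H}}(w))$. Using the defining property of a Riemannian submersion, which says that $d\pi_x$ restricted to $\mathcal{H}_x$ is a linear isometry onto $T_{\pi(x)}N$, we compute
\begin{equation*}
k\bigl(d\pi_x(v),d\pi_x(w)\bigr)=k\bigl(d\pi_x(p_{\mathcal{H}}(v)),d\pi_x(p_{\mathcal{H}}(w))\bigr)=g\bigl(p_{\mathcal{H}}(v),p_{\mathcal{H}}(w)\bigr)=h(v,w).
\end{equation*}
This is exactly the statement that $\pi|_S:(S,h|_S)\to(\pi(S),k|_{\pi(S)})$ is an isometry of Riemannian manifolds, since we already know $\pi|_S$ is a diffeomorphism onto its image.

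There is really no main obstacle here; the lemma is a direct consequence of the defining property of a Riemannian submersion combined with the injectivity of $d\pi|_{TS}$. The only thing to watch is that one must use the hypothesis that $\pi|_S$ is a diffeomorphism (not merely an immersion or just smooth), since this is what rules out the degenerate case $T_xS\cap\mathcal{V}_x\neq 0$ that would cause $h|_S$ to fail to be positive definite.
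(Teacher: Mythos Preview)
Your proof is correct and follows essentially the same approach as the paper's: both identify the kernel of $h$ with the vertical distribution, use injectivity of $d\pi|_{TS}$ from the diffeomorphism hypothesis to get positive-definiteness, and then invoke the defining property of a Riemannian submersion to obtain $k(\pi_*(v),\pi_*(w))=g(p_{\mathcal{H}}(v),p_{\mathcal{H}}(w))=h(v,w)$. The only cosmetic difference is that the paper phrases the second step via horizontal lifts $v^*,w^*$ of $\pi_*(v),\pi_*(w)$ and then notes $v^*=p_{\mathcal{H}}(v)$, which amounts to the same computation you wrote directly.
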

\begin{proof}
Let $v,w\in T_{x}S$ and $\mathcal{V}$, $\mathcal{H}$ be the vertical and horizontal distributions associated to the Riemannian submersion, respectively.  Then $h(v,v)=0$ $\Leftrightarrow$ $p_{\mathcal{H}}(v)=0$ $\Leftrightarrow$ $v\in\mathcal{V}$ $\Leftrightarrow$ $\pi_{\ast}(v)=0$ $\Leftrightarrow$ $v=0$ since $\pi|_{S}$ is a diffeomorphism onto its image.  Now let $v^{*},w^{*}\in\mathcal{H}$ such that $\pi_{*}(v^{*})=\pi_{*}(v)$ and $\pi_{*}(w^{*})=\pi_{*}(w)$, hence $w^{*}=p_{\mathcal{H}}(w)$ and $v^{*}=p_{\mathcal{H}}(v)$.  Then $k(\pi_{*}(v),\pi_{*}(w)) = k(\pi_{*}(v^{*}),\pi_{*}(w^{*})) = g(v^{*},w^{*})=h(v,w)$ as claimed.
\end{proof}

The purpose of the above is simply to see an intrisic way to write the pullback of the metric on $N$ to any submanifold $S$.  This intrinsic viewpoint will generalize in a useful way in the stratified category.

Recall now that if $M$ is a smooth manifold and $G$ is a Lie Group acting smoothly and properly on $M$, then there is a natural stratification structure on $M$ given by collecting together points in $M$ whose isotropy groups lie in the same conjugacy class.  This in turn induces a stratification structure on the quotient space $M/G$.  If $M$ is Riemannian and $G$ acts isometrically, then the action induces both a quotient distance function and a stratified Riemannian structure on the quotient space, see \cite{Pf}.  A stratified Riemannian structure on a stratified space always itself induces a length distance function with respect to piecewise stratified curves (generally this means continuous curves which can be decomposed into a countable union of curves, each of which lie in a single stratum and are smooth.  The length of such curves is induced from the stratified Riemannian structure.  In fact, for a quotient space, we can get away with curves which can be decomposed into a finite number of such pieces). It can be checked that the induced length space distance and the quotient distance function are the same on $M/G$.

\begin{definition}
Let $(M,g)$ be a Riemannian manifold with $G$ a Lie group acting properly and isometrically on it.  Let $p_{\mathcal{H}}$ be the projection to the horizontal distribution at each point.  We call the tensor $h$ on $M$ defined by $h(v,w)=g(p_{\mathcal{H}}(v),p_{\mathcal{H}}(w))$ the full pullback.
\end{definition}

Note that $h$ above is not the pullback of the stratified Riemannian metric on $M/G$.  It is in fact larger than the standard pullback.  Additionally it is worth pointing out that $h$ is not even continuous, as $p_{\mathcal{H}}$ isn't continuous away from the principal stratum.

\begin{definition}
Let $M$ be a smooth manifold with $G$ a Lie Group acting smoothly and properly on $M$. Let $S\subseteq M$ be a smooth submanifold and $\Gamma \leq G$ a finite subgroup.  We say the pair $(S,\Gamma)$ is a local reduction of the group action if

1) $\Gamma$ restricts to an action on $S$

2) Giving $S$ the induced stratification structure from $M$ and letting $\iota: S/\Gamma \rightarrow M/G$ be the natural map with $s\Gamma \mapsto s G$, then the image of $\iota$ is open and $\iota$ is a stratified diffeomorphism onto its image.

We say $(S,\Gamma)$ is a local reduction at $x$ if $S$ contains $x$.
\end{definition}

\begin{prop}\label{prop_lr}
Let $(M,g)$ be a Riemannian Manifold with $G$ a Lie Group acting properly and isometrically on $M$.  Let $(S,\Gamma)$ be a local reduction.  Then the full pullback $h$ defines a stratified metric on $S/\Gamma$, hence induces a distance function on $S/\Gamma$, and $\iota: S/\Gamma \rightarrow \iota(S/\Gamma)\subseteq M/G$ is an isometry of metric spaces with respect to this distance.
\end{prop}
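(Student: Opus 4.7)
The plan is to reduce to the Riemannian-submersion/submanifold lemma proved earlier stratum-by-stratum, and then assemble the pieces via the length-space structure. Although the full pullback $h$ on $M$ is not continuous across strata of $M$, its restriction to each stratum $\Sigma_M$ of $M$ coincides with the full pullback of an honest Riemannian submersion $\pi_M|_{\Sigma_M}:\Sigma_M\to\Sigma_M/G$, and this is all we will need.

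First I would check that, for $x$ in a stratum $\Sigma_M$, one has $V_x \subseteq T_x\Sigma_M$ because $G$-orbits lie within strata. Hence the orthogonal decomposition $T_xM = V_x \oplus \mathcal{H}_x$ restricts to a compatible decomposition of $T_x\Sigma_M$, the map $\pi_M|_{\Sigma_M}$ is a Riemannian submersion with respect to the induced stratified Riemannian metric on $\Sigma_M/G$, and $h|_{T\Sigma_M}$ equals its full pullback. For a stratum $\Sigma_S = S \cap \Sigma_M$ of $S$, the stratified diffeomorphism hypothesis on $\iota$ implies that $\pi_M|_{\Sigma_S}$ has injective derivative (equivalently $V_x \cap T_x\Sigma_S = 0$), since the possible identifications under the composition $\Sigma_S \to \Sigma_S/\Gamma \hookrightarrow \Sigma_M/G$ come from the finite $\Gamma$-action, not from collapsing along $\Sigma_S$. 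Applying the earlier lemma locally, I conclude that $h|_{\Sigma_S}$ is a Riemannian metric on $\Sigma_S$ and that $\pi_M|_{\Sigma_S}$ is a local isometry onto its image. Because $\Gamma\leq G$ preserves both $g$ and the $G$-orbit structure, it preserves $h$ and acts isometrically on each $\Sigma_S$; hence $h|_S$ descends to a stratified Riemannian metric on $S/\Gamma$, and on each stratum $\iota$ becomes a local isometry onto the corresponding stratum of $\iota(S/\Gamma)\subseteq M/G$.

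To promote this stratum-wise identification to a genuine isometry of metric spaces, I would use that both the distance on $S/\Gamma$ (from $h$) and the distance on $\iota(S/\Gamma)\subseteq M/G$ are defined as infima of lengths of piecewise stratified curves. Because $\iota$ is a stratified diffeomorphism onto an open subset of $M/G$, piecewise stratified curves in $S/\Gamma$ correspond bijectively under $\iota$ to piecewise stratified curves in $\iota(S/\Gamma)$, and the stratum-wise isometry preserves the length of each piece; hence the induced length distances agree and $\iota$ preserves distances. The main subtlety to handle is that $h$ itself is not continuous across strata of $S$, so one must verify that the stratum-wise Riemannian metrics fit together into a length structure that is actually a metric and compatible with the one on $M/G$; this I expect to be the main obstacle, but it resolves itself because $\iota$ identifies our construction with the restriction of the (already well-posed) stratified Riemannian structure on $M/G$ to the open set $\iota(S/\Gamma)$.
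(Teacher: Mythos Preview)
Your proposal is correct and follows essentially the same approach as the paper: both arguments work stratum-by-stratum, use that $\pi_G$ restricted to a stratum is a Riemannian submersion, invoke the earlier submersion/submanifold lemma to see that $h$ restricts to a genuine metric on each $S\cap\Sigma_M$ with $\pi_G$ a local isometry there, observe $\Gamma$-invariance of $h$, and then pass from the stratified Riemannian isometry to a metric-space isometry via the piecewise-stratified length structure. The only organizational difference is that the paper phrases the stratum-wise step by first lifting a neighborhood in $(\mathcal{S}\cap S)/\Gamma$ back to $S$ via the $\Gamma$-covering map and then applying the lemma, whereas you apply the lemma directly on $\Sigma_S\subset M$ and then descend; these are equivalent.
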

\begin{proof}
Let $\mathcal{S}\subseteq M$ be a stratum of $M$ which intersects $S$.  Let $\pi_{G}:M\rightarrow M/G$ and $\pi_{\Gamma}:S \rightarrow S/\Gamma$ be the projection maps.  Note that $\pi_{G}|_{\mathcal{S}}$ is a Riemannian submersion and $\pi_{\Gamma}|_{\mathcal{S}}$ is a covering map.  Let $p\in (\mathcal{S}\cap S)/\Gamma$ and $U$ a neighborhood of $p$ in the stratum $(\mathcal{S}\cap S)/\Gamma$.  Since the projection is a covering map we can lift $l:U\rightarrow\tilde{U}\subseteq\mathcal{S}$ in $M$, for $U$ small and $l$ a diffeomorphism.  Now for $x\in U$ we have that $\iota(x)=\pi_{G}(l(x))$.  Since by assumption the restriction of $\iota$ to $U$ becomes a diffeomorphism onto its image we have that the same holds for $\pi_{G}$ restricted to $\tilde{U}$, so by the last lemma that $h|_{\tilde{U}}$ is a metric on $\tilde U$ and $\pi_{G}|_{\tilde{U}}$ is a Riemannian isometry onto its image.  Since $h$ is invariant under the $\Gamma$ action and $p$ was arbitrary we see that $h$ induces Riemannian structure on $(\mathcal{S}\cap S)/\Gamma$ and the restriction of $\iota$ to this stratum is a Riemannian isometry onto its image.  Since the stratum $\mathcal{S}$ was arbitrary we see that $h$ induces a stratified Riemannian structure on $S/\Gamma$ and $\iota$ is a stratified Riemannian isometry onto its image.  Finally we point out as before that the quotient distance function on $M/G$ is induced by piecewise stratified curves, and hence a stratified Riemannian isometry induces an isometry of metric spaces with induced length space structures.
\end{proof}

\begin{definition}
Let $M$ be a smooth manifold with $G$ a Lie Group acting smoothly and properly on $M$. Let $S\subseteq M$ be a smooth submanifold and $\Gamma \leq G$ a finite subgroup.  We say the pair $(S,\Gamma)$ is a local Riemannian reduction of the group action if $(S,\Gamma)$ is a local reduction and additionally it holds that for every $C^{k,\alpha}$ metric $g$ on $M$ that is invariant under the $G$ action, the restriction of the full pullback tensor $h$ to $S$ is a $C^{k,\alpha}$ metric.
\end{definition}

\begin{remark}
It can be checked that this condition is equivalent to having that the restrictions of either of the projection maps $p_{\mathcal{H}}$ or $p_{\mathcal{V}}$ to maps $TS\rightarrow TM$ are $C^{k,\alpha}$ maps.  Also it may not seem like it but we will see that this condition really only depends on the structure of the group action and not on any underlying geometry.
\end{remark}

\begin{corollary}\label{cor_lRr}
Let $(M,g)$ be a $C^{k,\alpha}$ Riemannian Manifold with $G$ a Lie Group acting properly and isometrically on $M$.  Let $(S,\Gamma)$ be a local Riemannian reduction, then $\pi(S)\subseteq M/G$ has a $C^{k,\alpha}$ Riemannian Orbifold structure.
\end{corollary}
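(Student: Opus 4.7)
The plan is to transport the natural orbifold structure from $S/\Gamma$ to $\pi(S)\subseteq M/G$ via the map $\iota$ supplied by Proposition \ref{prop_lr}. The definition of a local Riemannian reduction is precisely what is needed to upgrade the stratified metric of Proposition \ref{prop_lr} into a genuine $C^{k,\alpha}$ Riemannian structure.

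First I would observe that, by the definition of a local Riemannian reduction, the full pullback tensor $h$ restricts to a $C^{k,\alpha}$ Riemannian metric on $S$, so $(S,h|_{S})$ is an honest $C^{k,\alpha}$ Riemannian manifold. Next I would verify that $\Gamma$ acts on $(S,h|_{S})$ by isometries. Since $G$ acts isometrically on $(M,g)$ it preserves both the vertical distribution $\mathcal{V}$ (tangent to orbits) and its $g$-orthogonal complement $\mathcal{H}$; equivalently $p_{\mathcal{H}}\circ \phi_{*}=\phi_{*}\circ p_{\mathcal{H}}$ for every $\phi\in G$. Combined with the $G$-invariance of $g$ this gives the $G$-invariance of $h$, and hence the $\Gamma$-invariance of $h|_{S}$ since $\Gamma\leq G$ and $\Gamma$ restricts to $S$ by the definition of a local reduction.

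Because $\Gamma$ is a finite group of $C^{k,\alpha}$ isometries of the $C^{k,\alpha}$ Riemannian manifold $(S,h|_{S})$, the quotient $(S,h|_{S})/\Gamma$ inherits a standard $C^{k,\alpha}$ Riemannian orbifold structure: at each $x\in S$ a $\Gamma_{x}$-invariant slice to the $\Gamma$-orbit furnishes a uniformizing chart, with $\Gamma_{x}$ as the local group (if the $\Gamma$-action has an ineffective kernel $K$ one replaces $\Gamma$ by $\Gamma/K$, which does not affect the quotient).

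Finally I would invoke Proposition \ref{prop_lr}, which guarantees that $\iota\colon S/\Gamma\to \pi(S)\subseteq M/G$ is a bijective metric space isometry, and use $\iota$ to transport the Riemannian orbifold structure from $S/\Gamma$ onto $\pi(S)$. This yields the claimed $C^{k,\alpha}$ Riemannian orbifold structure on $\pi(S)$. There is essentially no hard step: the real work has been done in Proposition \ref{prop_lr} and in promoting the definition of local reduction to local Riemannian reduction, and the only small verification is the $G$-invariance (and hence $\Gamma$-invariance) of the full pullback $h$.
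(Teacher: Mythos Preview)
Your proposal is correct and follows essentially the same approach as the paper's proof: both use the definition of local Riemannian reduction to upgrade $h|_{S}$ to a genuine $C^{k,\alpha}$ metric, then invoke Proposition~\ref{prop_lr} to identify $(S,h|_{S})/\Gamma$ isometrically with $\pi(S)$. Your version is more explicit about the $\Gamma$-invariance of $h$ (which the paper already noted in the proof of Proposition~\ref{prop_lr}) and about why a finite isometric quotient is a Riemannian orbifold, but there is no substantive difference in strategy.
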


\begin{proof}
The length space distance function on $S/\Gamma$ is then the same as the quotient distance function induced by $(S,h)/\Gamma$ and by the last lemma $\iota: S/\Gamma \rightarrow \pi(S)$ is an isometry of metric spaces.
\end{proof}

Notice in the case of a finite group action a local reduction and a local Riemannian reduction are necessarily the same, this need not be the case when the group has dimension.  Also for a proper Lie Group action by $G$ on a smooth manifold $M$ there exists a trivial example of a local Riemannian reduction at a point $x\in M$ when the isotropy at that point is finite, in particular let $\Gamma = I_{x}$ and $S=S_{x}$ be a slice through $x$ (rigorously this is a corollary of Lemma \ref{lRr_isotropy}, though it is at least intuitively clear).  It may not be immediately clear that when the isotropy is not finite that a local Riemannian reduction will even exist, but the next lemma will show that there are in fact many such examples.

\begin{lemma}\label{lem_tor_3}
Let $(\mathds{R}^{m},g)$ be a $C^{k,\alpha}$ Riemannian manifold with $m\leq 3$.  Assume $T\leq O(m)$ is a torus which acts isometrically on $(\mathds{R}^{m},g)$.  Then $(\mathds{R}^{m},g)/T$ has a $C^{k,\alpha}$ Riemannian Orbifold structure.
\end{lemma}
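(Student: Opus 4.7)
The plan is to exhibit a single global local Riemannian reduction $(S,\mathds{Z}_2)$ whose image under $\iota$ is all of $(\mathds{R}^m,g)/T$, and then to invoke Corollary \ref{cor_lRr}. Since $T\leq O(m)$ is a torus and the maximal tori of $SO(m)$ for $m\leq 3$ are at most one-dimensional, $T$ is either trivial (in which case the conclusion is tautological) or $T\cong S^1$, and I work in the latter case. Orthogonally decompose $\mathds{R}^m = F\oplus W$ into the $T$-fixed subspace $F$ (of dimension $m-2\in\{0,1\}$) and the $2$-plane $W$ on which $T$ acts by rotations; pick any line $L\subset W$, and set $S:=F\oplus L$, letting $\Gamma=\mathds{Z}_2$ act as $l\mapsto-l$ on $L$. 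This $\mathds{Z}_2$ is nothing but $R_\pi\in T$ restricted to $S$, so it automatically preserves every $T$-invariant object.

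That $(S,\mathds{Z}_2)$ is a local reduction is elementary: each $T$-orbit through $(f,w)\in F\oplus W$ meets $S$ in exactly the $\mathds{Z}_2$-orbit $\{(f,\pm l_0)\}$ with $|l_0|=|w|$, so $\iota\colon S/\mathds{Z}_2\to\mathds{R}^m/T$ is a bijection onto the whole (hence open) quotient; off $F$, $S$ is a codimension-one slice transverse to the orbits with $\mathds{Z}_2$ acting freely, and on $F$ the map is the identity, so in either stratum $\iota$ is a diffeomorphism. The substantive step, and the one I expect to require the most care, is verifying that the full pullback $h$ restricts to a $C^{k,\alpha}$ metric on $S$ across the fixed axis $F$. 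In coordinates $(z,x,y)$ with $z\in F$, $(x,y)\in W$ and $L=\{y=0\}$, the Killing field generating $T$ is $X=-y\partial_x+x\partial_y$; at a point of $S$ off the axis $X=x\partial_y$ is vertical, and writing $\partial_a,\partial_b$ for the $S$-tangent basis $\{\partial_{z_i},\partial_x\}$ one obtains
\[
h|_S(\partial_a,\partial_b)\;=\;g_{ab}\,-\,\frac{g_{ay}\,g_{by}}{g_{yy}}.
\]
Because $g$ is $C^{k,\alpha}$ and $g_{yy}=g(\partial_y,\partial_y)>0$ everywhere (a metric component, not the Killing-field norm), the right-hand side is a $C^{k,\alpha}$ symmetric $2$-tensor on all of $S$, including the axis. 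Moreover, $T$-invariance of $g$ at axis points (fixed by all of $T$, in particular by $R_{\pi/2}$) forces $g_{ay}(z,0,0)=0$ for $a\in\{z_i,x\}$, so the correction term vanishes there and $h|_S=g|_S$ on the axis, which is exactly the correct geometric value (at axis points $\mathcal{V}=\{0\}$, so $h=g$). Positive-definiteness is then immediate, off-axis from transversality and on-axis from $g|_S>0$.

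Since $\mathds{Z}_2\subset T$ also preserves $h|_S$, $(S,\mathds{Z}_2)$ is a local Riemannian reduction with $\iota(S/\mathds{Z}_2)=\mathds{R}^m/T$, and Corollary \ref{cor_lRr} then yields the desired $C^{k,\alpha}$ Riemannian orbifold structure. The only real difficulty in the argument is the regularity step at the axis: off-axis the formula is manifestly $C^{k,\alpha}$ in the components of $g$, but the geometric projection defining $h$ degenerates as the Killing field vanishes, and one has to notice that the algebraic formula survives this degeneration because its denominator $g_{yy}$ does not.
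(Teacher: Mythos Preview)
Your proof is correct and follows essentially the same approach as the paper: both construct the local Riemannian reduction $(S,\mathds{Z}_2)$ with $S$ the span of the fixed axis and a line in the rotated plane, and both establish regularity of $h|_S$ by observing that on $S$ the vertical projection coincides with the (globally smooth) orthogonal projection onto the constant $\partial_y$-direction. The paper phrases this as $p_{\mathcal{V}}|_S=p_{\mathcal{Y}}|_S$, while you write out the resulting coordinate formula $h_{ab}=g_{ab}-g_{ay}g_{by}/g_{yy}$ explicitly and check its value on the axis; the content is the same.
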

\begin{proof}
$m=1$:  There is only the trivial action.

$m=3$:  After possibly conjugating coordinates there is only one torus $O(3)$ action, which is the action of $T=S^{1}$ by rotation around the $z$-axis.  Let $S=xz-plane$ and $\Gamma=\{1,e^{i\pi}\}$. Note that the orbit map $\iota:S/\Gamma \rightarrow \mathds{R}^{3}/T$ is a homeomorphism and its restriction to the two strata of $\mathds{R}^{3}$ and $S$, namely the $z$-axis and $\mathds{R}^{3}-\{z$-axis$\}$, are diffeomorphisms.  Hence $(S,\Gamma)$ is a local reduction of the group action.  We show $(S,\Gamma)$ is a local Riemannian reduction and then can apply the last corollary.  So $h|_{S}$ is $C^{k,\alpha}$ iff $p_{\mathcal{H}}|_{S}$ is $C^{k,\alpha}$ iff $p_{\mathcal{V}}|_{S}$ is $C^{k,\alpha}$.  Let $\mathcal{Y}$ be the smooth distribution which at each point is just the $y$-axis.  Then clearly $p_{\mathcal{Y}}$ is $C^{k,\alpha}$ globally.  Let $(x,0,z)\in S$ with $x\neq 0$,  then $\mathcal{V}=\mathcal{Y}$ and hence $p_{\mathcal{V}}=p_{\mathcal{Y}}$.  If $x=0$ then because $g$ is invariant by rotation around the $z$-axis we have that $p_{\mathcal{Y}}(v)=0$ $\forall v\in T_{(0,0,z)}S$ and that $p_{\mathcal{V}}(v)=0$
for all $v$.  Hence $p_{\mathcal{V}}|_{S} = p_{\mathcal{Y}}|_{S}$ and so $p_{\mathcal{V}}|_{S}$ is $C^{k,\alpha}$.

$m=2$:  Same as above but use $S$=$x$-axis (alternatively notice that in this case quotient is always just the isometric half line, which has the natural orbifold cover by the real line).
\end{proof}
\begin{remark}
In fact in the above we could also have assumed $T$ is a finite abelian extension of a torus without any change in the proof, since if $T$ were finite then we could take the whole of $\mathds{R}^{m}$ as our reduction, while if $T$ was not finite then it must still only be the circle.
\end{remark}

Now given Proposition \ref{finite_ext_quot} we have an immediate corollary.

\begin{corollary}\label{cor_3_torus}
Let $(\mathds{R}^{m},g)$ be a $C^{k,\alpha}$ Riemannian manifold, $m\leq 3$.  Assume $\widetilde{T}\leq O(m)$ is a finite extension of a torus which acts isometrically on $(\mathds{R}^{m},g)$.  Then $(\mathds{R}^{m},g)/\widetilde{T}$ has a $C^{k,\alpha}$ Riemannian Orbifold structure.
\end{corollary}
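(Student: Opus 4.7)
The plan is to reduce the finite-extension case directly to the pure-torus case already handled in Lemma \ref{lem_tor_3}, and then to absorb the remaining finite quotient using Proposition \ref{finite_ext_quot}. Writing $T$ for the identity component of $\widetilde{T}$, I would observe that $T$ is a (closed, connected) subtorus of $O(m)$ which acts faithfully, properly, and isometrically on $(\mathds{R}^{m},g)$, and $F:=\widetilde{T}/T$ is a finite group. Moreover, $F$ acts on the quotient $(\mathds{R}^{m},g)/T$ by isometries, and there is a canonical identification
\[
\bigl((\mathds{R}^{m},g)/T\bigr)/F \;=\; (\mathds{R}^{m},g)/\widetilde{T}
\]
as metric spaces (this is the usual two-step quotient identity for a normal subgroup of a Lie group acting isometrically).

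Next, I would apply Lemma \ref{lem_tor_3} to $T$ (which is either trivial or a circle in the range $m\leq 3$) to conclude that $(\mathds{R}^{m},g)/T$ already carries a $C^{k,\alpha}$ Riemannian orbifold structure. The residual action of the finite group $F$ on this orbifold is by isometries (since the $\widetilde{T}$-action was), so we are in the situation of a finite isometric quotient of a $C^{k,\alpha}$ Riemannian orbifold.

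Finally, I would invoke Proposition \ref{finite_ext_quot}, which is precisely the statement that such a finite isometric quotient of a $C^{k,\alpha}$ Riemannian orbifold is again a $C^{k,\alpha}$ Riemannian orbifold. Combining the identification above with this fact yields that $(\mathds{R}^{m},g)/\widetilde{T}$ has a $C^{k,\alpha}$ Riemannian orbifold structure, which is what we wanted.

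There is no real obstacle here: the only nontrivial input is Lemma \ref{lem_tor_3}, whose proof exhibits explicit local Riemannian reductions in the relevant low-dimensional torus actions, and the black-box Proposition \ref{finite_ext_quot} about finite extensions. The role of the corollary is essentially bookkeeping, making explicit that enlarging $T$ to a finite extension $\widetilde{T}$ does not destroy the orbifold conclusion established for the identity component.
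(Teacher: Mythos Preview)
Your proposal is correct and matches the paper's approach exactly: the paper states this corollary as an immediate consequence of Proposition \ref{finite_ext_quot} applied on top of Lemma \ref{lem_tor_3}, which is precisely the two-step reduction you outline. One small remark: Proposition \ref{finite_ext_quot} is phrased with a semidirect product $\tilde G = G \rtimes F$, whereas you only use that $T$ is normal with finite quotient $F=\widetilde{T}/T$; this is harmless since the proof of that proposition only uses normality, but it is worth being aware of the mismatch in hypotheses.
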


To finish Proposition \ref{prop_m_torus} (and Theorem \ref{thm_main_torus}) we need the following two lemmas for the inductive procedure.

\begin{lemma}\label{lRr_product}
Let $M$ be a smooth manifold with Lie Group $G$ acting smoothly and properly on it.   Let $(S,\Gamma)$ be a local Riemannian reduction in $M$.  Then for the $G$ action on $M\times\mathds{R}$, where $G$ acts by acting fixing the $\mathds{R}$ factor and acting on each $M$ fiber, we have that $(S\times\mathds{R},\Gamma)$ is a local Riemannian reduction in $M\times\mathds{R}$.
\end{lemma}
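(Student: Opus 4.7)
The plan is to verify both clauses of the local Riemannian reduction definition for $(S \times \mathds{R}, \Gamma)$ in $M \times \mathds{R}$ under the diagonal $G$-action (trivial on the $\mathds{R}$-factor).

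First, the local reduction clause is immediate from the product structure. Since $G$ acts trivially on $\mathds{R}$, every orbit lies in a slice $M \times \{t\}$, so the stratification of $M \times \mathds{R}$ and the quotient $(M \times \mathds{R})/G$ both split as products with $\mathds{R}$; similarly $(S \times \mathds{R})/\Gamma$ splits. Under the canonical identifications, the orbit map $(S \times \mathds{R})/\Gamma \to (M \times \mathds{R})/G$ becomes $\iota_M \times \mathrm{Id}_{\mathds{R}}$, and inherits the stratified-diffeomorphism-onto-open-subset property from $\iota_M$.

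For the Riemannian clause, I would take an arbitrary $G$-invariant $C^{k,\alpha}$ metric $\tilde g$ on $M \times \mathds{R}$, set $g_t = \tilde g|_{M \times \{t\}}$ (a $C^{k,\alpha}$ family of $G$-invariant $C^{k,\alpha}$ metrics on $M$), and introduce the auxiliary $C^{k,\alpha}$ vector field $\zeta(x, t) \in T_xM$ characterized by $g_t(\zeta, \cdot) = \tilde g(\partial_t, \cdot)|_{T_xM}$, so that $\partial_t - \zeta$ is $\tilde g$-orthogonal to the slice $T_xM$. A direct algebraic computation using the decomposition $v + a\partial_t = (v + a\zeta) + a(\partial_t - \zeta)$ and the horizontality of the second summand yields the clean formula
\[
\tilde h\bigl((v_1, a_1\partial_t),(v_2, a_2\partial_t)\bigr) = h_t(v_1 + a_1\zeta,\, v_2 + a_2\zeta) + a_1 a_2\bigl(\tilde g(\partial_t, \partial_t) - g_t(\zeta, \zeta)\bigr),
\]
where $h_t$ is the full pullback tensor on $(M, g_t)$. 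For $v_i \in T_sS$, most pieces on the right are visibly $C^{k,\alpha}$: the pure term $h_t(v_1, v_2)$ is controlled directly by the LRr hypothesis for $(S, \Gamma)$ in $M$, while the mixed terms $h_t(v_i, \zeta)$ are handled by writing $h_t(\zeta, v) = g_t(\zeta, v) - g_t(\zeta, p_{\mathcal{V}}^{g_t}(v))$ and using self-adjointness of $p_{\mathcal{V}}^{g_t}$ to push the vertical projection onto the $T_sS$-argument, where the hypothesis supplies regularity. The remaining ingredients $g_t$, $\zeta$, $\tilde g(\partial_t, \partial_t)$, $g_t(\zeta, \zeta)$ are manifestly $C^{k,\alpha}$ on $M \times \mathds{R}$.

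The genuinely delicate contribution is $h_t(\zeta, \zeta) = g_t(\zeta, \zeta) - |p_{\mathcal{V}}^{g_t}(\zeta)|^2$, since $\zeta \notin T_sS$ in general and self-adjointness cannot eliminate the factor $|p_{\mathcal{V}}^{g_t}(\zeta)|^2$; this is the main obstacle. The key observation that resolves it is that $\zeta$ is automatically $G$-invariant: both $\tilde g$ and $\partial_t$ are $G$-invariant, so $\tilde g(\partial_t, \cdot)|_{T_xM}$ is a $G$-invariant $1$-form, and raising it with the $G$-invariant metric $g_t$ yields a $G$-invariant vector field. For any smooth $G$-invariant vector field on $M$, its vertical projection by a $G$-invariant $C^{k,\alpha}$ metric is again a $C^{k,\alpha}$ vector field --- a standard consequence of the slice theorem for proper $G$-actions, since on each equivariant tube $G \times_I W$ the decomposition of a $G$-invariant section corresponds to a linear splitting of the slice representation. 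Granting this, $|p_{\mathcal{V}}^{g_t}(\zeta)|^2$ is $C^{k,\alpha}$ on $M \times \mathds{R}$, and combining with the previous paragraph gives the $C^{k,\alpha}$-regularity of $\tilde h$ on $T(S \times \mathds{R})$, completing the verification.
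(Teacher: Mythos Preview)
Your approach and the paper's are the same idea expressed in dual language. The paper works directly with the vertical projection and writes the one-line factorization $p_{\tilde{\mathcal{V}}} = p_{\mathcal{V}}\circ p_{M}$, where $p_{M}$ is the $\tilde g$-orthogonal projection onto $TM$; your vector $\zeta$ is exactly $p_{M}(\partial_{t})$, and your decomposition $(v+a\zeta)+a(\partial_{t}-\zeta)$ is just this factorization unpacked at the level of the full pullback $\tilde h$. The paper then simply asserts that the composition is $C^{k,\alpha}$ because each factor is, without commenting on the fact that $p_{M}$ does not carry $T(S\times\mathds{R})$ into $TS$, so that the hypothesis $p_{\mathcal{V}}|_{TS}\in C^{k,\alpha}$ does not literally apply to the term $p_{\mathcal{V}}(\zeta)$. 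Your proof is more honest here: you isolate this term, observe that $\zeta$ is $G$-invariant, and argue that $p_{\mathcal{V}}$ of a $G$-invariant vector field is automatically regular. That observation is correct, but your justification (``linear splitting of the slice representation'') is a bit hand-wavy; the cleanest way to make it rigorous is precisely the adjoint-metric construction the paper develops in Appendix~\ref{sec_gq}: write $p_{\mathcal{V}}\zeta = e_{*}\eta$ with $\eta = \tilde h^{-1}\bigl(g(\zeta,e_{*}\cdot)\bigr)$, which is manifestly $C^{k,\alpha}$ since $\tilde h$ is nondegenerate $C^{k,\alpha}$ and $g(\zeta,e_{*}\cdot)$ is a $C^{k,\alpha}$ section of $M\times\mathfrak{g}^{*}$. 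With that substitution your argument is complete and in fact more transparent than the paper's on this point.
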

\begin{proof}
First note that $(S\times\mathds{R},\Gamma)$ is clearly a local reduction.  As in the proof of proposition \ref{prop_lr} we observe that $(S\times\mathds{R},\Gamma)$ is a local Riemannian reduction iff for each $C^{k,\alpha}$ metric $g$ on $M$ which is invariant under $G$ that the restriction of $p_{\mathcal{\tilde{V}}}$ to $S\times\mathds{R}$ is $C^{k,\alpha}$, where $\tilde{\mathcal{V}}$ is the vertical distribution with respect to the $G$ action on $M\times\mathds{R}$.  But since $G$ acts trivially on each $\mathds{R}$ factor, we see that $p_{\mathcal{\tilde{V}}} = p_{\mathcal{V}}\circ p_{M} $ where $p_{M}$ is the projection to the tangent of $M$ and $p_{\mathcal{V}}$ is the projection from $M$ to the vertical distribution induced by the action of $G$ on $M$.  Thus since $p_{M}$ is $C^{k,\alpha}$ and $p_{\mathcal{V}}$ restricted to $S$ is $C^{k,\alpha}$ we have that $p_{\mathcal{\tilde{V}}}$ restricted to $S\times\mathds{R}$ is $C^{k,\alpha}$.
\end{proof}

The following requires some of the structure from Appendix \ref{sec_gq} to prove.  Note that if $G$ is a smooth and proper action and $x\in M$, then there is an $O(m)$ action of $I_{x}$ on $\mathds{R}^{m}$ defined by a choice of slice $(S_{x},\phi_{x})$ at $x$ (and it is easy to check this action is, up to conjugation, independent of the slice map $\phi_{x}$, see Appendix \ref{sec_gq}).

\begin{lemma}\label{lRr_isotropy}
Let $M$ be a smooth manifold with $G$ a Lie Group acting smoothly and properly on $M$, and  let $x\in M$.  If there exists a local Riemannian reduction for the induced action of $I_{x}$ on $\mathds{R}^{m}$ at $0$ then there exists a local Riemannian reduction for the action of $G$ on $M$ at $x$.
\end{lemma}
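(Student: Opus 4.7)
The plan is to use the slice theorem (Appendix \ref{sec_gq}) to transport the hypothesis from $\mathds{R}^{m}$ to $M$. Pick an $I_{x}$-equivariant slice $(S_{x},\phi_{x})$ at $x$, where $\phi_{x}:S_{x}\to\mathds{R}^{m}$ is an $I_{x}$-equivariant diffeomorphism sending $x$ to $0$ and realizing the slice representation, and where the induced map $\Psi:G\times_{I_{x}}S_{x}\to U$ is a $G$-equivariant diffeomorphism onto an open $G$-invariant neighborhood $U$ of $G\cdot x$. Given the hypothesized local Riemannian reduction $(S',\Gamma)$ for the $I_{x}$-action on $\mathds{R}^{m}$ at $0$, set $S:=\phi_{x}^{-1}(S')\subseteq S_{x}\subseteq M$; since $\phi_{x}$ is $I_{x}$-equivariant and $\Gamma\leq I_{x}\leq G$, the group $\Gamma$ acts on $S$. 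That $(S,\Gamma)$ is a local reduction of the $G$-action follows from slice theory: the orbit map $\iota:S/\Gamma\to M/G$ factors as $S/\Gamma\to S_{x}/I_{x}\to U/G$, the second arrow being the identification from $\Psi$; the first is a stratified diffeomorphism onto an open set by hypothesis, and the strata on $S_{x}$ and $U$ match because $I_{y}^{M}=I_{y}^{I_{x}}$ for every $y\in S_{x}$ near $x$.

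For the Riemannian condition, let $g$ be any $G$-invariant $C^{k,\alpha}$ metric on $M$. Fix an $\mathrm{Ad}(I_{x})$-invariant splitting $\mathfrak{g}=\mathfrak{i}_{x}\oplus\mathfrak{m}$ (available since $I_{x}$ is compact), so that under the natural identification $T_{[e,y]}(G\times_{I_{x}}S_{x})\cong\mathfrak{m}\oplus T_{y}S_{x}$ the $G$-orbit tangent becomes $V^{M}_{[e,y]}=\mathfrak{m}\oplus V^{I_{x}}_{y}$, with $V^{I_{x}}_{y}=\{Y^{*}_{y}:Y\in\mathfrak{i}_{x}\}\subseteq T_{y}S_{x}$. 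Write $g$ at $[e,y]$ in block form with smooth $C^{k,\alpha}$ components $g^{mm}(y)$, $g^{mS}(y)$, $g^{SS}(y)$. A direct computation of the least-squares equations defining the $g$-orthogonal projection of $(0,v)\in T_{y}S\hookrightarrow\mathfrak{m}\oplus T_{y}S_{x}$ onto $V^{M}_{[e,y]}$ produces its two components as $p_{1}=A_{y}(v-p_{2})\in\mathfrak{m}$ and $p_{2}\in V^{I_{x}}_{y}$, where $A_{y}:=(g^{mm}(y))^{-1}g^{mS}(y):T_{y}S_{x}\to\mathfrak{m}$ is $C^{k,\alpha}$, and where $p_{2}$ is the $\tilde g_{y}$-orthogonal projection of $v$ onto $V^{I_{x}}_{y}$ for the Schur-complement bilinear form $\tilde g_{y}:=g^{SS}(y)-g^{Sm}(y)(g^{mm}(y))^{-1}g^{mS}(y)$ on $T_{y}S_{x}$.

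The decisive point is that $\tilde g$ is a $C^{k,\alpha}$, $I_{x}$-invariant Riemannian metric on $S_{x}$: positive-definiteness is the classical fact that the Schur complement of a positive-definite symmetric block matrix is positive-definite, smoothness follows from smoothness of $g$ and invertibility of $g^{mm}$, and $I_{x}$-invariance follows from the $G$-invariance of $g$ combined with the $\mathrm{Ad}(I_{x})$-invariance of the splitting. Pulling $\tilde g$ back via $\phi_{x}$ gives an $I_{x}$-invariant $C^{k,\alpha}$ metric on $\mathds{R}^{m}$ to which the hypothesis applies: $p^{I_{x},\tilde g}_{\mathcal{V}}|_{TS'}$ is $C^{k,\alpha}$, hence so is $p_{2}$ as a function on $TS$, and therefore $p_{1}=A_{y}(v-p_{2})$ is $C^{k,\alpha}$ as well. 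This proves $p^{M,g}_{\mathcal{V}}|_{TS}$ is $C^{k,\alpha}$, which by the remark after the definition of local Riemannian reduction is equivalent to $(S,\Gamma)$ being one for the $G$-action on $M$ at $x$. The main obstacle I anticipate is the bookkeeping in the $\mathfrak{m}\oplus T_{y}S_{x}$ identification and in deriving the Schur-complement identity: one must verify that the bilinear form on $T_{y}S_{x}$ whose $V^{I_{x}}$-projection computes the $p_{2}$-component of the $G$-projection is precisely $\tilde g$, and confirm that no additional hidden derivatives of $g$ appear that would break the clean reduction to the $I_{x}$-hypothesis.
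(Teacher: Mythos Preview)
Your argument is correct, and the Schur-complement computation does exactly what you claim: writing the $g$-projection of $(0,v)$ onto $\mathfrak{m}\oplus V^{I_x}_y$ in block form, orthogonality to $\mathfrak{m}$ forces $p_1=A_y(v-p_2)$ with $A_y=(g^{mm})^{-1}g^{mS}$, and substituting back yields $\tilde g(v-p_2,w)=0$ for all $w\in V^{I_x}_y$, so $p_2$ is precisely the $\tilde g$-projection onto the $I_x$-vertical with $\tilde g=g^{SS}-g^{Sm}(g^{mm})^{-1}g^{mS}$. No hidden derivatives appear.

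The paper's proof reaches the same conclusion by a related but different decomposition. Instead of splitting $\mathcal{V}$ as the \emph{direct sum} $\mathfrak{m}\oplus V^{I_x}$ and using the Schur complement, it splits $\mathcal{V}$ as the $g$-\emph{orthogonal} sum $\mathcal{I}\oplus\mathcal{I}^{\perp}$, where $\mathcal{I}$ is the distribution generated by $I_x$ and $\mathcal{I}^{\perp}$ is its $g$-orthogonal complement inside $\mathcal{V}$. The hypothesis is then applied with the restricted metric $g|_{S_x}$ (rather than your Schur complement $\tilde g$) to control $p_{\mathcal{I}}|_S$, and the complementary piece $\mathcal{I}^{\perp}$ is shown to be a genuinely $C^{k,\alpha}$ distribution on all of $G_S$ via the adjoint-metric construction of Appendix~\ref{sec_gq}: one passes to the trivial bundle $G_S\times\mathfrak{g}$, builds a nondegenerate $C^{k,\alpha}$ fiber metric $\tilde h$ there, and identifies $\mathcal{I}^{\perp}=e_*(i_x^{\perp_{\tilde h}})$. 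Your route is more elementary in that it replaces this appendix machinery by a single block-elimination, and it makes transparent that the metric to which the hypothesis is applied is exactly the quotient metric of Corollary~\ref{cor_geom_slice}. The paper's route has the compensating advantage of producing the stronger intermediate fact that $\mathcal{I}^{\perp}$ is $C^{k,\alpha}$ everywhere on $G_S$, not merely along $S$.
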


\begin{proof}
Let $(S_{x},\phi)$ be a slice at $x$ and $(S^{m},\Gamma)$ be a local Riemannian reduction at $0\in \mathds{R}^{m}$ with respect to the induced $I$ action.  Define $S=\phi^{-1}(S^{m})\subseteq S_{x}$.  It is clear that $(S,\Gamma)$ is a local reduction for $G$ at $x$, we need to prove it is a Riemannian reduction.  Let $\mathcal{V}$ be the (noncontinuous) vertical distribution on $G_{S}$ and $g$ a $C^{k,\alpha}$ $G$-invariant metric on $G_{S}$ ($G_{S}\equiv S_{x}\cdot G$, see Appendix \ref{sec_gq}).  We need to show the projection $p_{\mathcal{V}}|_{S}$ is $C^{k,\alpha}$.

Let $\mathcal{I}\subseteq \mathcal{V}$ be the vertical distribution generated by the $I\leq G$ action, and let $\mathcal{I}^{\perp}\subseteq\mathcal{V}$ be the perpendicular of $\mathcal{I}$ in $\mathcal{V}$ with respect to $g$.  Then $p_{\mathcal{V}} = p_{\mathcal{I}} + p_{\mathcal{I}^{\perp}}$ and if we can show $p_{\mathcal{I}}|_{S}$ and $p_{\mathcal{I}^{\perp}}|_{S}$ are $C^{k,\alpha}$ then $p_{\mathcal{V}}|_{S}$ is also $C^{k,\alpha}$.  Since $\mathcal{I}$ is tangent to $S_{x}$ we see that $p_{\mathcal{I}}|_{S} = \phi_{*}^{-1}\circ p_{\mathcal{I}^{m}}\circ\phi_{*}|_{S}$ where $\mathcal{I}^{m}$ is the horizontal distribution on $\mathds{R}^{m}$ generated by $I$.  Since this is a composition of $C^{k,\alpha}$ functions it is also $C^{k,\alpha}$.

We will show that $\mathcal{I}^{\perp}$ is in fact a $C^{k,\alpha}$ distribution, hence the projection map to it is as well.  So let $\mathds{R}^{m}\times G$ be the total space above $G_{S}$ with $\tilde g$ a total metric as in Appendix \ref{sec_gq}.  Also as in Appendix \ref{sec_gq} consider the trivial bundle $G_{S}\times\mathfrak{g}$ equipted with a $C^{k,\alpha}$ adjoint metric $\tilde h$ and let $e_{*}:G_{S}\times\mathfrak{g}\rightarrow TG_{S}$ be the vector bundle map by pushing forward the lie algebra by the derivative of the $G$ action.  The construction of $\tilde h$ was such that for any $p\in S_{x}$ if $i_{p}$ is the lie algebra of the isotropy $I_{p}$ and $i_{p}^{\perp}$ is its perpendicular with respect to $\tilde h$, then $e_{*}(p,\cdot)$ restricted to $i_{p}^{\perp}$ is an isometry.  In particular since $\forall p\in S_{x}$ we have $i_{p}\leq i_{x}$ we have that $e_{*}(p,\cdot)$ restricted to $i_{x}^{\perp}$ is an isometry for all $p$ and $\mathcal{I}^{\perp}(p)=e_{*}(p,i_{x}^{\perp})$.  But $i_{x}$ is a constant distribution in $G_{S}\times\mathfrak{g}$ and $\tilde h$ is $C^{k,\alpha}$, so $i_{x}^{\perp}$ is a $C^{k,\alpha}$ distribution in $G_{S}\times\mathfrak{g}$.  Since $e_{*}$ is an isometry on this distribution we thus have $e_{*}(i_{x}^{\perp}) =\mathcal{I}^{\perp}$ is a $C^{k,\alpha}$ distribution, as claimed.

\end{proof}
\begin{remark}\label{rem_lRr_product}
Using techniques similar to the last two lemmas one can also show that if $M$ and $N$ are smooth with $G$ acting smoothly and properly on $M$ and $H$ on $N$, then if $(m,n)\in M\times N$ is such that there exists a local Riemannian reduction for the $G$ action at $m\in M$ and for the $H$ action at $n\in N$, then there exists a local Riemannian reduction for the $G\times H$ action at $(m,n)\in M\times N$.
\end{remark}

The difficulty in the above is that both $\mathcal{V}$ and $\mathcal{I}$ are not even continuous, yet their difference $\mathcal{I}^{\perp}$ has full regularity.  The pullback metric on $G_{S}\times\mathfrak{g}$ from the map $e_{*}$ is degenerate, and so trying to construct the perpendicular $\mathcal{I}^{\perp}$ directly in $\mathfrak{g}$ you find it is not well defined. The process of constructing a smoothing of this pullback metric, namely the adjoint metric $\tilde h$, allows you to do this construction in a canonical and hence smooth way.

Now we can finish the proof of Proposition \ref{prop_m_torus}:

\begin{proof}[Proof of Proposition \ref{prop_m_torus}]

We need only assume $\tilde T = T$ is just a torus, as then Proposition \ref{finite_ext_quot} shows the result holds for $\tilde T$, a finite extension of $T$.  In the following it will be convenient to allow a little more however, that $\tilde T = T$ is at most a finite abelian extension of a torus.

We will first prove by induction on $m$ that if $\tilde B\subseteq\mathds{R}^{m}$ is the set of points which do not have local Riemannian reductions with respect to the  $T$ action, then $dim_{Haus}\tilde B\leq m-4$.  To begin the induction argument note that for $m\leq 3$ that the proof of Lemma \ref{lem_tor_3} shows that the result holds.  Now assume it holds for some $m\geq 3$, we will show it holds for $m+1$.  So let $(\mathds{R}^{m+1},g)$ be a $C^{k,\alpha}$ Riemannian manifold with $T$ a finite abelian extension of a torus with an $O(m+1)$ isometric action.  Then we can write $\mathds{R}^{m+1}-\{0\}=\mathds{R}\times S^{m}$ with $T$ acting on each $S^{m}$ fiber.  If $T$ acts on $S^{m}$ then the dimension of a slice through any point certainly has dimension $\leq m$, and since the isotropy of this action must also be a finite abelian extension of a torus we have by the inductive hypothesis and Lemma \ref{lRr_isotropy} that the subset $\tilde B_{S}\subseteq S^{m}$ of points without local Riemannian reductions satisfies $dim_{Haus}\tilde B_{S}\leq m-4$.  Thus by Lemma \ref{lRr_product} for every $(t,x)$ outside set $\mathds{R}\times \tilde B_{S}$ there exists a local Riemannian reduction and $dim_{Haus}\mathds{R}\times \tilde B_{S}\leq m-3 = (m+1)-4$ as claimed.

Now notice that $\tilde B$ is $T$ invariant and if $B\subseteq (\mathds{R}^{m},g)/T$ is the set of non-$C^{k,\alpha}$ orbifold points then $B\subseteq \tilde B/T$ with $dim \tilde B/T \leq dim \tilde B \leq m-4$, which gives the first inequality.

To obtain the second inequality we need to estimate more carefully the dimension of $\tilde B/T$.  So let $\mathds{R}^{m}=V^{1}\oplus \ldots \oplus V^{n}$ (if $m=2n$ is even dimensional, $\mathds{R}^{m}=V^{1}\oplus \ldots \oplus V^{n}\oplus\mathds{R}$ if $m=2n+1$ is odd dimensional) be the decomposition of $\mathds{R}^{m}$ into two dimensional orthogonal subspaces determined by the action of a maximal torus in $O(m)$ which contains $T$.  Let $F\leq T$ be the subgroup of $T$ which fixes $\tilde B$ with $F_{0}$ its identity component.  Because the actions of $T$ and $F_{0}$ are generically free we see that $dim (\mathds{R}^{m}/T) = m-dim T$ and $dim \tilde B/T = dim\tilde B - dim T + dim F_{0}$. Because $F_{0}$ is also a torus action there is some subcollection of $\{V^{i}\}$ that $F_{0}$ acts nontrivially on while it fixes the rest.  After reordering we can assume $F_{0}$ acts nontrivially on $V^{1},\ldots, V^{k}$ while it acts trivially on $V^{k+1},\ldots, V^{n}$.  Because $F_{0}$ is a torus action we then see that the fix point set $Fix(F_{0})=\{0\}\times V^{k+1}\oplus\ldots\oplus V^{n}$ (since if the projection of a point in $\mathds{R}^{m}$ into $V^{i}$ is nonzero for some $i\in\{1,\ldots, k\}$ then some element of $F_{0}$ rotates the point).  Now on the one hand since $F_{0}$ embeds as a torus action on $V^{1}\oplus\ldots\oplus V^{k}$ we have the estimate $dim F_{0} \leq k$.  Combining with the estimate $dim\tilde B\leq m-4$ we see

\[
dim \tilde B/T \leq m - dim T + k -4 = dim (\mathds{R}^{m}/T) + k -4.
\]

But also $\tilde B \subseteq Fix(F_{0})=\{0\}\times V^{k+1}\oplus\ldots\oplus V^{n}$ by construction and so we can also estimate $dim\tilde B \leq m -2k$ to get

\[
dim \tilde B/T \leq m - 2k - dim T  + k = dim (\mathds{R}^{m}/T) - k.
\]

If $k=0,1$ then the first estimate gives us $dim \tilde B/T \leq dim (\mathds{R}^{m}/T) - 3$, while if $k\geq 3$ then the second estimate also gives us $dim \tilde B/T \leq dim (\mathds{R}^{m}/T) - 3$.  We thus only need to analyze the $k=2$ case.  If $dim F_{0}=1$ then we are done since then $dim\tilde B \leq dim (\mathds{R}^{m}/T) - 3$, so we need only consider $dim F_{0}=2$.  In this case $F_{0}$ embeds as the maximal torus in $O(V^{1}\oplus V^{2})$ and so by Lemma \ref{lem_tor_split} the quotient $V^{1}\oplus V^{2}/F_{0}$ is an orbifold and $V^{1}\oplus V^{2}$ admits a local Riemannian reduction at every point.  Hence by remark \ref{rem_lRr_product} if we consider the action of $T/F_{0}$ on $V^{k+1}\oplus\ldots\oplus V^{n}$ then any point which admits a local Riemannian reduction with respect to this action admits one in $\mathds{R}^{m}$ for the $T$ action, and so by the previous estimates $dim\tilde B \leq dim(V^{k+1}\oplus\ldots\oplus V^{n})-4 = m-8$ and hence $dim\tilde B/T \leq m - 6$.

\end{proof}

Finally we can prove the main theorem of this section

\begin{proof}[Proof of Theorem \ref{thm_main_torus}]
Let $x\in M$ and $I_{x}$ be the isotropy group with $dim I_{x} \leq i$ and $(S_{x},\phi_{x})$ a smooth slice through $x$ as in Appedix \ref{sec_gq}.  Then $I_{x}$ has an $O(m)$ action on $\mathds{R}^{m}$ where $m=n-(dim G - dim I_{x})\leq n - (l-i)$ is the dimension of a slice through $x$. By Corollary \ref{cor_geom_slice} there exists a $C^{k,\alpha}$ metric $h$ on $\mathds{R}^{m}$ and an $O(m)$ action by $I_{x}$ which is isometric with respect to $h$ such that a neighborhood of $x G \in M/G$ is isometric to $(\mathds{R}^{m},h)/I_{x}$.  Thus by Proposition \ref{prop_m_torus} the set $B\subseteq (\mathds{R}^{m},h)/I_{x}$ of non-$C^{k,\alpha}$ orbifold points satisfies $dim B \leq \min\{n-(l-i)-4,m-i-3\} = min\{n-(k-i)-4, dim M/G -3\}$.  Since $x$ was arbitrary this completes the proof.
\end{proof}

\section{Geometry of Toric Quotients II}\label{sec_gtq2}
In this section we will give a necessary and sufficient condition for a torus action to have a local Riemannian reduction at a point.  It is interesting to note that the results of this section can in fact be generalized to nontorus actions, however the techniques for this are a little different and we will not need this result.  For notational convenience we introduce the following definitions

\begin{definition}
Let $T^{k}$ be a torus acting orthogonally on $\mathds{R}^{m}$.  We call the action by $T^{k}$ split if for every two dimensional subspace $V\subseteq\mathds{R}^{m}$ which is invariant under the $T^{k}$ action but not fixed, then there exists $S^{1}\leq T^{k}$ such that the induced $S^{1}$ action rotates $V$ but fixes its orthogonal compliment.
\end{definition}

\begin{lemma}\label{lem_split_curv}
Let $T^{k}$ be a torus which acts orthogonally on $\mathds{R}^{m}$ and let $\{V^{T}_{i}\}$ be its orthogonal decomposition.  Then the quotient $\mathds{R}^{m}/T^{k}$ has bounded stratified curvature if and only if the action is split.
\end{lemma}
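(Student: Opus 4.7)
The key structural observation is that because $T^{k}$ commutes with scalar multiplication on $\mathds{R}^{m}$, the quotient $\mathds{R}^{m}/T^{k}$ is a metric cone with vertex at the origin over the spherical link $\Sigma = S^{m-1}/T^{k}$, and the dilation $y \mapsto \lambda y$ descends to a homothety of $\mathds{R}^{m}/T^{k}$ scaling the metric by $\lambda^{2}$ and every stratum's sectional curvature by $\lambda^{-2}$. For the easy direction (split $\Rightarrow$ bounded stratified curvature), I would use the split hypothesis to produce for each $V_{i}^{T}$ in the orthogonal decomposition $\mathds{R}^{m} = F \oplus V_{1}^{T} \oplus \cdots \oplus V_{n}^{T}$ a subcircle $S^{1}_{i} \leq T^{k}$ rotating $V_{i}^{T}$ and fixing $F \oplus \bigoplus_{j \neq i} V_{j}^{T}$. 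After reducing to the faithful quotient, $S^{1}_{1} \times \cdots \times S^{1}_{n}$ exhausts $T^{k}$ up to a finite group, and the quotient of $\mathds{R}^{m}$ by $\prod S^{1}_{i}$ is isometric to the flat Euclidean product $F \times [0,\infty)^{n}$; every stratum is flat, and the residual finite group gives an orbifold quotient that is still flat on each stratum.

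For the converse (not split $\Rightarrow$ unbounded stratified curvature) I would apply O'Neill's formula to the Riemannian submersion $\mathds{R}^{m}_{\mathrm{princ}} \to (\mathds{R}^{m}/T^{k})_{\mathrm{princ}}$: since $\mathds{R}^{m}$ is flat, $K_{\mathrm{quot}}(X,Y) = 3|A(X,Y)|^{2} / (|X|^{2}|Y|^{2} - \langle X,Y\rangle^{2}) \geq 0$ with $A(X,Y) = \tfrac{1}{2} V[X,Y]$ the O'Neill tensor, and equality iff $A$ vanishes on the horizontal plane. The central algebraic claim is then that $A \equiv 0$ on the principal stratum iff the action is split. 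I would verify this by an explicit polar-coordinate computation: at a principal point, using $(r_{i},\phi_{i})$ on each $V_{i}^{T}$, the horizontal distribution is spanned by $F$-directions, the radial vectors $\partial_{r_{i}}$, and the angular combinations $W_{\alpha} = \sum_{i} (\alpha_{i}/r_{i}^{2}) \partial_{\phi_{i}}$ with $\alpha \in \ker C^{T} \subseteq \mathds{R}^{n}$ (where $C$ is the $n \times k$ weight matrix $c_{ij} = \chi_{i}(e_{j})$). A direct bracket gives $[\partial_{r_{i}}, W_{\alpha}] = -(2\alpha_{i}/r_{i}^{3}) \partial_{\phi_{i}}$, and projecting onto the vertical distribution (spanned by the orbit generators $\sum_{i} c_{ij} \partial_{\phi_{i}}$, under the $r_{i}^{2}$-weighted inner product induced from the flat metric) yields $A(\partial_{r_{i}}, W_{\alpha}) = -(\alpha_{i}/r_{i}) \cdot C(C^{T} R C)^{-1} \chi_{i}$ with $R = \mathrm{diag}(r_{i}^{2})$. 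Since each non-fixed $\chi_{i} \neq 0$ on the principal stratum, requiring $A = 0$ for all $i$ and all $\alpha \in \ker C^{T}$ forces $\ker C^{T} = 0$, i.e.\ $\mathrm{rank}\, C = n$; and this rank condition is precisely the split condition, because it says every standard basis vector $e_{i} \in \mathds{R}^{n}$ lies in the image of $C$, yielding a one-parameter subgroup of $T^{k}$ rotating only $V_{i}^{T}$ (and closing up to a genuine $S^{1}$ by density of rational directions in the integer weight lattice). Hence in the non-split case $A \neq 0$ at some principal point $x$, so $K_{\mathrm{quot}}(x) > 0$, and by the cone scaling $K_{\mathrm{quot}}(\lambda x) = K_{\mathrm{quot}}(x)/\lambda^{2} \to \infty$ as $\lambda \to 0^{+}$, giving unbounded stratified curvature at the vertex.

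The main obstacle will be executing the $A$-tensor computation cleanly, in particular keeping track of the horizontal distribution in polar coordinates, using the correct $r_{i}^{2}$-weighted inner product when decomposing into vertical and horizontal pieces, and extracting the rank condition on the weight matrix $C$ from the vanishing of the vertical projection. A secondary issue is handling repeated weights, which force automatic non-splitness (any invariant $2$-plane inside an isotypic component has orthogonal complement containing further invariant planes of the same weight, so no $S^{1} \leq T^{k}$ can rotate it alone); in that setting the bracket computation above still applies after choosing a real $2$-plane decomposition compatible with the weights, and the resulting $A$-tensor is again nonzero, so the cone-scaling argument delivers curvature blow-up at the origin exactly as before.
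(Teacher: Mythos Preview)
Your proposal is correct and follows the same overall architecture as the paper: for the split direction you exhibit the quotient as a flat product $F\times[0,\infty)^{n}$, and for the converse you show the O'Neill tensor $A$ is nonzero at some principal point and then use the cone scaling to force curvature blow-up at the origin. The paper does exactly this.

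Where you differ is in the execution of the $A\neq 0$ computation. The paper works with two specific vector fields $\bar v=(-y_{1},x_{1},0,\ldots)$ and $\bar w=(-x_{2},0,x_{1},0,\ldots)$, takes their horizontal projections by hand, and computes the bracket and its pairing with a specific vertical vector at a carefully chosen point; the argument is concrete but somewhat ad hoc and requires a case check ($\bar w$ versus $\bar w_{y}$) at the end. Your route is more structural: you parametrize the entire horizontal distribution in polar coordinates, compute $A(\partial_{r_{i}},W_{\alpha})$ in closed form as $-(\alpha_{i}/r_{i})\,C(C^{T}RC)^{-1}\chi_{i}$, and reduce $A\equiv 0$ to the linear-algebraic condition $\ker C^{T}=0$, which you then identify with the split hypothesis. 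This buys you a clean equivalence ($A\equiv 0$ on the principal stratum $\Leftrightarrow$ split) rather than just the one implication the lemma needs, and it handles repeated weights uniformly without a separate case. One small point: your remark about a ``residual finite group'' in the split direction is unnecessary---once the action is faithful and $\operatorname{rank}C=n$, the induced map $T^{k}\to T^{n}$ on angular coordinates is already surjective, so the quotient is exactly $F\times[0,\infty)^{n}$ with no further finite quotient.
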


\begin{proof}
We can identify $T^{k}$ with its image in $O(m)$ without any loss of generality.  First assume the action is not split, then we can find a maximal torus $T^{n}$ containing $T^{k}$ such that when we write $\mathds{R}^{m}$ in coordinates $(x_{1},y_{1}, \ldots , x_{n},y_{n})$ where $V^{T}_{i}=span\{x_{i},y_{i}\}$ (if $m=2n$, otherwise we use coordinates $(x_{1},y_{1}, \ldots , x_{n},y_{n},z_{n+1})$ if $m=2n+1$, though since $z_{n+1}$ is fixed there is no loss of generality in assuming $m=2n$) form an orthogonal decomposition with respect to the action by $T^{n}$ and $V^{T}_{1}$ is a two dimensional subspace for which $T^{k}$ fails the split assumption.  Let $U\subseteq\mathds{R}^{m}$ be the open dense subset defined by $U\equiv \{(x_{1},y_{1},\ldots,x_{n},y_{n})\in\mathds{R}^{m}: x_{i},y_{i}\neq 0$ $\forall i\}$.  Notice that $T^{n}$ acts freely on $U$.  Assuming the action is not split we will construct a point $\bar{x}\in U$ and orthogonal horizontal vectors $v,w\in \mathcal{H}_{T^{k}}$ at $\bar{x}$ such that the vertical projection $p_{\mathcal{V}_{T^{k}}}([v,w])\neq 0$ (this is a tensor, and so independent of how you extend $v$ and $w$ to compute the lie bracket).  Thus at $\tilde{x}=\bar{x}T^{k}\in \mathds{R}^{m}/T^{k}$, which is a smooth point of the quotient, the sectional curvatures strictly increase.  Since the quotient is a cone this forces the sectional curvatures along the line connecting the origin to $\tilde{x}$ to blow up as you approach the origin.

Now since $T^{k}$ fails the split assumption for $V^{T}_{1}$ there is no $S^{1}\leq T^{k}$ which rotates only $V^{T}_{1}$, however $V^{T}_{1}$ is also not a fixed plane.  In particular, the killing field $\bar{v}=(-y_{1},x_{1},0, \ldots , 0)$ generated from an $S^{1}$ action which rotates only $V^{T}_{1}$ is not contained in $\mathcal{V}_{T^{k}}$ at any point where it is nondegenerate, in particular at any point in $U$.  Let $\bar{w}=(-x_{2},0,x_{1},0,\ldots,0)$ and fix for the moment $\bar{x}\in U$.  Let $\bar{v}_{T^{k}}$ be the killing field induced by an element of the lie algebra of $T^{k}$ with the property that at $\bar{x}$ we have that $\bar{v}_{T^{k}}$ is the projection of $\bar{v}$ into the vertical subspace $\mathcal{V}_{T^{k}}$.  Similarly let $\bar{w}_{T^{n}}$ be the killing field induced by an element of the lie algebra of $T^{n}$ such that at $\bar{x}$ we have that $\bar{w}_{T^{n}}$ is the projection of $\bar{w}$ into the vertical subspace $\mathcal{V}_{T^{n}}$.  Let $v=\bar{v} -\bar{v}_{T^{k}}$ and $w=\bar{w}-\bar{w}_{T^{n}}$, which at $\bar{x}$ are horizontal projections of $\bar{v}$ and $\bar{w}$.  We show that at $\bar{x}$ we have that $v$ and $\bar{v}_{T^{k}}$ are nonzero.  Since $V^{T}_{1}$ is not a fixed plane and any element of $T^{k}$ which moves $V^{T}_{1}$ must move some other $V^{T}_{i}$ as well, we have that any element of $\mathcal{V}_{T^{k}}$ with a component in the $V^{T}_{1}$ direction must have a component in the $V^{T}_{i}$ direction.  Since $\bar{v}\not\in\mathcal{V}_{T^{k}}$ by construction, we have that $\bar{v}_{T^{k}}$ is nonzero with a component in the $V^{T}_{1}$ and hence $V^{T}_{i}$ directions.  Since $\bar{v}$ does not have a component in the $V^{T}_{i}$ direction the difference $v$ is nonzero.

Now by construction there exists $\lambda_{i}=\lambda_{i}(\bar{x})$ and $\mu_{i}=\mu_{i}(\bar{x})$ such that $\bar{v}_{T^{k}} = (-\lambda_{1}y_{1}, \lambda_{1}x_{1},-\lambda_{2}y_{2}, \lambda_{2}x_{2},\ldots)$ and $\bar{w}_{T^{n}} = (-\mu_{1}y_{1}, \mu_{1}x_{1},-\mu_{2}y_{2}, \mu_{2}x_{2},\ldots)$.  In particular we have that $w = (x_{2}+\mu_{1}y_{1}, -\mu_{1}x_{1},x_{1}+\mu_{2}y_{2}, -\mu_{2}x_{2},\ldots)$.  At $\bar{x}$ we see that if $w=0$ then $\mu_{1} = \frac{\bar{x}_{2}}{\bar{y}_{1}} \neq 0 \Rightarrow -\mu_{1}\bar{x}_{1} \neq 0 $, which is not possible and hence $w\neq 0$.  Also note that by construction that $\langle v,w\rangle = 0$ because $v\in \mathcal{V}_{T^{n}}$ but $w\in \mathcal{H}_{T^{n}}$.

We know by our construction that $\bar{v}\not\in \mathcal{V}_{T^{k}}$ or $\mathcal{H}_{T^{k}}$ at $\bar{x}$, and hence $\bar{v}\cdot\bar{v}_{T_{k}}>0$ and $|\bar{v}|^{2}>|\bar{v}_{T_{k}}|^{2}$ at $\bar{x}$.  Hence $\lambda_{1}(\bar{x}_{1}^{2}+\bar{y}_{1}^{2})>0 \Rightarrow \lambda_{1}>0$ and $(\bar{x}_{1}^{2}+\bar{y}_{1}^{2}) > \lambda^{2}_{1}(\bar{x}_{1}^{2}+\bar{y}_{1}^{2}) \Rightarrow \lambda_{1} < 1$.  Now $[v,w] = [\bar{v} - \bar{v}_{T_{k}}, \bar{w} - \bar{w}_{T_{n}}] = [\bar{v} - \bar{v}_{T_{k}}, \bar{w}]$ (since $v$ and $\bar{w}_{T_{n}}$ are both pushforwards from a commuting lie algebra) $= (-\lambda_{2}y_{2},(1-\lambda_{1})x_{2},(-1+\lambda_{1})y_{1},\lambda_{2}x_{1},0,\ldots,0)$. Hence at $\bar{x}$ we have that $\langle[v,w],\bar{v}_{T_{k}}\rangle = \bar{y}_{1}\bar{y}_{2}(\lambda_{1}\lambda_{2}+\lambda_{2}(1-\lambda_{1}))+ \bar{x}_{1}\bar{x}_{2}(\lambda_{2}^{2}+\lambda_{1}(1-\lambda_{1}))$.  Now we could have instead considered the vector field $\bar{w}_{y}=(0,-y_{2},0,y_{1},0,\ldots,0)$.  Repeating the above arguments with this vector we get at $\bar{x}$ that $\langle [v,w_{y}],\bar{v}_{T_{k}}\rangle = \bar{x}_{1}\bar{x}_{2}(\lambda_{1}\lambda_{2}+\lambda_{2}(1-\lambda_{1}))+ \bar{y}_{1}\bar{y}_{2}(\lambda_{2}^{2}+\lambda_{1}(1-\lambda_{1}))$.  We wish to find $\bar{x}$ where at least one of the two quantities $\langle[v,w],\bar{v}_{T_{k}}\rangle$ or $\langle[v,w_{y}],\bar{v}_{T_{k}}\rangle$ is nonzero.  So assume at some $\bar{x}$ both vanish.  Then if $\langle[v,w],\bar{v}_{T_{k}}\rangle = 0$ and $\langle[v,w_{y}],\bar{v}_{T_{k}}\rangle = 0$ we have that $ (\bar{x}_{1}\bar{x}_{2} +\bar{y}_{1}\bar{y}_{2})(\lambda_{2}^{2}+\lambda_{2} +\lambda_{1}(1-\lambda_{1})) = 0 \Rightarrow \lambda_{2}^{2}+\lambda_{1}(1-\lambda_{1}) = -\lambda_{2} \Rightarrow$ (plugging into first equation) $\lambda_{2}(\bar{x}_{1}\bar{x}_{2}-\bar{y}_{1}\bar{y}_{2})=0$.  Now if we pick $\bar{x}$ such that $\bar{x}_{1}\bar{x}_{2}-\bar{y}_{1}\bar{y}_{2} \neq 0$ then this implies $\lambda_{2} = 0$.  But this implies $\lambda_{1}(1-\lambda_{1}) = 0$, which is a contraction because we showed this quantity is positive.  Hence not both vanish and we have constructed our horizontal vectors.

Conversely if $T^{k}$ is split such that $T^{n}$ is a maximal torus containing $T^{k}$ then we can write $\mathds{R}^{m} = V^{T}_{1}\oplus\ldots\oplus V^{T}_{n}$, where $V^{T}_{i}$ are again an orthogonal decomposition with respect to the $T^{n}$ action, such that we can identify $T^{k}=S^{1}\times S^{1} \ldots \times S^{1}$ where the $i th$ $S^{1}$ factor acts by rotating only $V^{T}_{i}$.  Hence $\mathds{R}^{m}/T^{k}$ is isometric to a product of lines and half lines, and is flat.
\end{proof}

To see how this is useful we need the following

\begin{lemma}\label{lem_tor_split}
Let $T^{k}$ be a split action on $\mathds{R}^{m}$.  Then there exists a local Riemannian reduction for the action at the origin.
\end{lemma}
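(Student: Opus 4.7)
The plan is to exploit the split hypothesis to write the $T^k$ action as a product of circle rotations on orthogonal planes, and then build the local Riemannian reduction at the origin as the corresponding product of the reductions produced in Lemma \ref{lem_tor_3}.

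First I would extract a product decomposition of the action. Choose a maximal torus $T^n \leq O(m)$ containing $T^k$ and let $\mathds{R}^m = V^T_1 \oplus \cdots \oplus V^T_n$ (plus a fixed line if $m$ is odd) be the associated orthogonal decomposition into $T^n$-irreducible two-planes. For each index $i$ such that $V^T_i$ is not $T^k$-fixed, the split hypothesis furnishes an $S^1 \leq T^k$ which rotates $V^T_i$ and fixes its orthogonal complement; this $S^1$ must be precisely the $i$-th circle factor of $T^n$. Letting $I$ be the set of such indices, the $T^k$ action is trivial on $V^T_j$ for $j \notin I$, so under the identification $T^n = S^1_1 \times \cdots \times S^1_n$ we have $T^k \leq \prod_{i\in I} S^1_i$; combined with the reverse containment from the split condition, this forces $T^k = \prod_{i\in I} S^1_i$ and $|I|=k$. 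After reordering, $\mathds{R}^m = V_1 \oplus \cdots \oplus V_k \oplus W$ with $T^k = S^1_1 \times \cdots \times S^1_k$, where $S^1_i$ rotates only $V_i$ and $W$ is fixed pointwise.

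Next I would apply Lemma \ref{lem_tor_3} to each $S^1_i$ acting on $V_i \cong \mathds{R}^2$, which produces (as in the $m=2$ case of its proof) the local Riemannian reduction $(L_i, \mathds{Z}_2)$ at the origin of $V_i$, where $L_i \subset V_i$ is a line through the origin and $\mathds{Z}_2 = \{1,-1\} \leq S^1_i$ acts by reflection. The trivial action on $W$ trivially admits the reduction $(W, \{e\})$. I would then combine these reductions through Remark \ref{rem_lRr_product}: since $\mathds{R}^m = V_1 \times \cdots \times V_k \times W$ carries the corresponding product action, applying the remark inductively shows that
\[
S \;=\; L_1 \oplus \cdots \oplus L_k \oplus W, \qquad \Gamma \;=\; (\mathds{Z}_2)^k
\]
is a local Riemannian reduction for the $T^k$ action at the origin of $\mathds{R}^m$.

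The core conceptual step is the reduction of a split action to a literal product of circles on orthogonal planes; once this is in hand, the construction and the verification of Riemannian regularity reduce entirely to the two-dimensional case of Lemma \ref{lem_tor_3} combined with the product principle of Remark \ref{rem_lRr_product}, neither of which introduces new analytic difficulties. The main point requiring care is ensuring that the circle subgroups furnished by the split hypothesis actually exhaust $T^k$ rather than only generate a proper subgroup, but this is forced by the triviality of $T^k$ on the factors $V^T_j$ with $j \notin I$ together with $T^k \leq T^n$.
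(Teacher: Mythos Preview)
Your proposal is correct and arrives at exactly the same pair $(S,\Gamma)=(L_1\oplus\cdots\oplus L_k\oplus W,(\mathds{Z}_2)^k)$ that the paper constructs. The only difference is in how the local \emph{Riemannian} reduction property is verified: the paper repeats the coordinate computation of Lemma~\ref{lem_tor_3} directly in the product setting, showing by hand that $p_{\mathcal{V}}|_S$ coincides with projection onto the smooth distribution $\bigoplus_i y_i$-axis, whereas you factor the verification through Remark~\ref{rem_lRr_product}. Your route is cleaner and more modular; the paper's is more self-contained, since Remark~\ref{rem_lRr_product} is only asserted there (``using techniques similar to the last two lemmas'') rather than proved. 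In effect the paper's direct argument is precisely the proof of that remark in this specific case, so the two approaches differ in packaging rather than in content.
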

\begin{proof}
The proof is just a more involved version of Lemma \ref{lem_tor_3}.  Identifying $T^{k}$ with its image in $O(m)$ we can, because $T^{k}$ is split, write $\mathds{R}^{m} = V_{1}\oplus V_{2}\ldots \oplus V_{k}\oplus \mathds{R}^{m-2k}$ and $T^{k}=S^{1}\times S^{1} \ldots \times S^{1}$ where $i$th $S^{1}$ factor acts by rotation on $V_{i}$.  In particular we write $\mathds{R}^{m} = (x_{1},y_{1},\ldots,x_{k},y_{k},z)$ where $V_{i}=span\{x_{i},y_{i}\}$ and $z\in \mathds{R}^{m-2k}$.  Let $S\subseteq \mathds{R}^{m}$ be the subset $S=\{(x_{1},y_{1},\ldots,x_{k},y_{k},z): y_{i}=0\}$.  Let $\Gamma\leq T$ be the subgroup $\Gamma = \mathds{Z}_{2}\times\ldots\times\mathds{Z}_{2}$ where the $i$th $\mathds{Z}_{2}$ acts by rotating $V_{i}$ by an angle of $\pi$.

This action induces a stratification on $\mathds{R}^{m}$ where for each subset $N\subseteq \{1,2,\ldots k\}$ the sets $\mathcal{S}_{N}=\{(x_{1},y_{1},\ldots,x_{k},y_{k},z) : (x_{s},y_{s})=(0,0)$ if $s\in N,(x_{s},y_{s}) \neq (0,0)$ if $s\not\in N \}$ are the strata.  We see as in Lemma \ref{lem_tor_3} that the orbit map $\iota:S/\Gamma\rightarrow \mathds{R}^{m}/T^{k}$ is a homeomorphism and its restriction to each stratum is a diffeomorphism, and so $(S,\Gamma)$ is a local reduction.  We need to show that it is a local Riemannian reduction.  So let $g$ be a smooth metric on $\mathds{R}^{m}$ which is invariant under the torus action $T^{k}$ and let $p_{\mathcal{V}}$ be the projection to the vertical subspace.  We will show the restriction of $p_{\mathcal{V}}$ to $S$ is smooth.

Similar to Lemma \ref{lem_tor_3} let $\mathcal{Y}$ be the smooth distribution on $\mathds{R}^{m}$ which at each point is $\bigoplus_{i}y_{i}$-axis, the span of the $y$-axes. If $x\in S$ then $x$ is in a unique stratum $\mathcal{S}_{N}$.  We see that the isotropy subgroup $I_{x}$ at $x$ is spanned by those $S^{1}$ factors which act on $V_{s}$ for $s\in N$, and that the vertical distribution $\mathcal{V}$ for the $T^{k}$ action at $x$ is $\bigoplus_{s\not\in N}y_{s}$-axis.  By identifying the tangent space at $x$ with $\mathds{R}^{m}$ itself we see that $\forall s\in N$  if $v\in V_{s}$ and $w\not\in V_{s}$, where additionally $w$ has no components in $V_{s}$, there exists an element of the isotropy which fixes $w$ and maps $v$ to $-v$.  Since the action is isometric we must have $g(v,w)=g(-v,w)=0$.  On the other hand since the isotropy contains the full $S^{1}$ rotation group on $V_{s}$ we see the restriction of $g$ to $V_{s}$ is conformal to the standard metric from $\mathds{R}^{m}$.  Hence we see if $v$ points along the $y_{s}$-axis then $g(v,T_{x}S)=0$.  In particular we see that for $w\in T_{x}S$ that $p_{\mathcal{Y}}(w)=p_{\mathcal{V}}(w)$.  Since $x$ was arbitrary we see $p_{\mathcal{Y}}|_{S}=p_{\mathcal{V}}|_{S}$.  But $p_{\mathcal{Y}}$ is smooth, so $p_{\mathcal{V}}$ is smooth.
\end{proof}
\begin{remark}\label{rem_orb_lRr_eq}
Notice that Lemmas \ref{lem_split_curv} and \ref{lem_tor_split} also now show that $p\in \mathds{R}^{m}/T^{k}$ has a local Riemannian orbifold structure iff for any lift $\tilde p\in \mathds{R}^{m}$ there must be a local Riemannian reduction (since in this case the curvatures of $\mathds{R}^{m}/T^{k}$ at $p$ must be bounded, but if there was not a local Riemannian reduction at $\tilde p$ then the isotropy action of $T^{k}$ at $\tilde p$ would not be split and hence the curvatures at $p$ would be unbounded).
\end{remark}

The above also tells us that if the quotient $M/T$ has bounded stratified curvature then it is a Riemannian orbifold, see also \cite{BKLT}.
\section{Proof of Theorems \ref{thm_main} and \ref{thm_main2}}\label{sec_main1}

Using the work of the previous sections it is now possible to complete the proofs of the main theorems.

\begin{proof}[Proof of Theorem \ref{thm_main} and \ref{thm_main2}]
Let $(M^{n}_{i},g_{i},p_{i})\stackrel{GH}{\rightarrow} (X,d,p_{\infty})$ and $p\in X$.  By Theorem \ref{thm_main_collapse} there exists a Riemannian manifold $(\tilde U,g_{\infty})$ with $g_{\infty}$ a $C^{k+1,\alpha}$ metric and a proper and isometric action by $N$, a finite extension of a nilpotent, such that $\tilde U/N$ is isometric to the neighborhood $B_{\epsilon}(p)$ for some small $\epsilon$.  Now if $\tilde p$ is a lift of $p$ to $\tilde U$ then the isotropy $I_{\tilde p}$ has identity component which is a compact nilpotent, and hence is a torus, with $dim I_{\tilde p}<dim N$ by Theorem \ref{thm_main_collapse}.  Then by Theorem \ref{thm_main_torus} we thus have that $dim(B\cap B_{\epsilon}(p))\leq min\{n-5,dim X-3\}$.  Since $p$ was arbitrary we have proved Theorem \ref{thm_main}.

To prove Theorem \ref{thm_main2} assume for $\epsilon$ small that the stratified curvature of $X$ in $B_{\epsilon}(p)$ is uniformly bounded.  Now assume in this case that the induced isotropy action of $I_{\tilde p}$ on $\mathds{R}^{m}$, $m = n - dim N + dim I$, is not split.  Then since the stratified curvature of $X$ is bounded we see that the tangent cone at $p$ is stratified flat (this is a cone geometry so limit is okay), but the tangent cone is also isometric to $\mathds{R}^{m}/I_{\tilde p}$.  By Lemma \ref{lem_split_curv} $\mathds{R}^{m}/I_{\tilde p}$ has unbounded curvature if $I_{\tilde p}$ is not split, and hence $I_{\tilde p}$ must be a split action.  Then in follows from Lemma \ref{lem_tor_split} that there is a local Riemannian reduction for this action, and so a neighborhood of $p$ is isometric to a Riemannian orbifold.  Conversely if $p\in X$ has a neighborhood isometric to a $C^{k+1,\alpha}$ Riemannian orbifold then the result is clear.
\end{proof}
\begin{remark}
To prove Theorem \ref{thm_main} under the assumption that the $(M^{n},g_{i})$ have only Ricci curvature bounds and a lower bound on the conjugacy radius we note that by the results of \cite{A} and \cite{PWY} we can perturb $(M^{n},g_{i})$ to $(M^{n},\tilde g_{i})$ in a $C^{1,\alpha}$ controlled way such that $(M^{n},\tilde g_{i})$ has uniform bounds on its curvature.  Then we can use Theorem \ref{thm_main_collapse} on $(M^{n},\tilde g_{i})$ to locally unwrap with $C^{1,\alpha}$ controlled geometry, which must also be $C^{1,\alpha}$ controlled for the same local unwrapping of $(M^{n},g_{i})$.  Since both unwrappings are invariant under the same fundamental group action we still see that the equivariant Gromov Hausdorff limit of the local unwrappings of $(M^{n},g_{i})$ limits with a finite extension of a nilpotent action as in Theorem \ref{thm_main_collapse}, and we can repeat the arguments of above.  If $(M^{n},g_{i})$ only has a uniform lower bound on the Ricci we can use \cite{AC} and \cite{PWY} to again perturb, though this time the new metric may only be $C^{0,\alpha}$ close to the original.
\end{remark}

\section{Proof of Theorem \ref{thm_main3}}\label{sec_main2}

Finally we are in a position to use the results of \cite{CT} to finish Theorem \ref{thm_main3}

\begin{proof}[Proof of Theorem \ref{thm_main3}]
Because of the bounds on the Ricci curvature we can certainly construct $(X,d,p)$ such that $(M_{i},g_{i},p_{i})\stackrel{GH}{\rightarrow}(X,d,p)$ after passing to a subsequence.  Let $d_{i}$ be distance functions on the disjoint unions $M_{i}\sqcup X$ such that the Hausdorff distances satisfy $\delta_{i}=d_{i,H}(M_{i},X)\rightarrow 0$.  It follows from this $\epsilon$-regularity theorem of \cite{CT} and a standard covering argument that there exists $C>0$ such that for each $i$ there are points $\{q_{ij}\}^{N}_{j=1}\in M_{i}$, where $N\leq N(D)$, such that on $(M_{i},g_{i})$ we have $|Rm_{i}|(y)\leq min\{1,C d(y,\{q_{ij}\})^{-2}\}$ away from the $\{q_{ij}\}$.  For each $q_{ij}$ let $\tilde q_{ij}\in X$ such that $d_{i}(q_{ij},\tilde q_{ij})<2\delta_{i}$.  Now for each $j$ fixed either $\tilde q_{ij}$ tends to infinity or after passing to a subsequence we can limit to a point $\tilde q_{j}$.  If $j$ is such that $q_{ij}$ tends to infinity then we drop it, and then after passing to a subsequence we get $\{\tilde q_{ij}\}\rightarrow \{q_{j}\}^{N}_{1}$ ($N$ maybe smaller than before).

Now let $x\in X$, $d\equiv d(x,\{q_{j}\})$ and $x_{i}\in M_{i}$ be such that $d_{i}(x,x_{i})<2\delta_{i}$.  Then for all $i$ sufficiently large, which certainly depends on $x$, we have that $d(x_{i},\{q_{ij}\})>d/2$.  Hence on $B_{d/4}(x_{i})$ we have that $|Rm_{i}|\leq min\{1,Cd^{-2}\}$ (where $C$ is possibly altered by a factor from before).  By standard estimates that means that on $B_{d/8}(x_{i})$ that $g_{i}$ is $\{A_{d}\}^{\infty}_{0}$ regular, where $A^{i}_{d}=A^{i}d^{-(2+i)}$ and $A^{i}$ only depends on $C$ (and in particular is independent of $x$).  Hence by Theorem \ref{thm_main} and Remark \ref{rem_thm_main} a neighborhood $U$ of $x$ is a smooth Riemannian orbifold.  Lower curvature bounds pass to the limit in the Alexandroff sense, so in particular on the open dense smooth manifold part of $U$ we have the desired lower curvature bounds in the standard sense.  Since the metric on $U$ lifts by a finite cover to a metric on Euclidean space and extends smoothly over the isotropy we see the lower curvature bound holds in the orbifold sense.
\end{proof}

It is interesting to point out that if $(X,d)$ is a collapsed Einstein manifold with singularities at $\{p_{j}\}$ and $(Y,\bar d, O(n))$ is the collapsed frame bundle, so that $Y/O(n)\approx X$, then $Y$ is a smooth manifold away from the orbits above $\{p_{j}\}$ and has a uniform upper $C/d(\{O(n)\cdot p_{j}\},\cdot)^{2}$ bound on the curvatures as well as such a lower bound.  In this context it is then clear that the upper bound on orbifold curvature of $X$ is then controlled by the O'Neill integrability tensor $A$ of the $O(n)$ action on $Y$, in particular by the last comment we have that $|sec_{X}+|A|^{2}|$ blows up at most quadratically near a singular point.  Further when we are near a singular point $p$ of $X$ an upper bound on the sectional curvatures of $X$ can be controlled at least under some additional assumptions on $X$, for instance if the tangent cones at $p$ are noncollapsing.

\appendix
\section{Geometry of Quotients}\label{sec_gq}

We first fix some notation.  Note that our definition of a slice is mildly extended from the usual definition.

\begin{definition}\label{slice}
Let $M^{n}$ be a smooth manifold with $G$ a finite dimensional Lie Group acting smoothly and properly on it and $x\in M$.  Let $G_{x} = \{x\cdot g : g\in G\}$ be the orbit of $x$ and $I_{x}= \{g\in G : x\cdot g = x\}$ be the isotropy subgroup at $x$.  We call a pair $(S_{x},\phi_{x})$ a slice at $x$ if $S_{x}\subseteq M$ is an $I_{x}$-invariant submanifold which is a slice at $x$ in the usual sense and $\phi_{x}:S_{x}\rightarrow \mathds{R}^{m}$ is an $I_{x}$-equivariant diffeomorphism with $\phi_{x}(x)=0$, $m=n-(dim G -dim I_{x})$ and where $I_{x}$ has an $O(m)$ action on $\mathds{R}^{m}$.  We denote the open set $G_{S}\equiv S\cdot G$.
\end{definition}

Throughout this section, unless otherwise stated, $M^{n}$ will denote a smooth manifold with $G$ a finite dimensional Lie Group acting smoothly and properly on it.  As in Definition \ref{slice} it will be convenient for us to a consider a smooth slice as being the pair $(S_{x},\phi_{x})$ as opposed to the more standard notation of just the submanifold $S_{x}$.  As usual if we have a smooth $G$-invariant metric $g$ on $M$ we can construct a slice and a slice map at a point $x\in M$ by considering $S_{x}\equiv \{exp_{x}(v): v\in TG^{\perp}_{x}$ ,$|v|<r \}$ and letting $\phi_{x}$ be a rescaled inverse of the exponential map at $x$ restricted to $G_{x}^{\bot}$ (note if $G$ acts properly then this is a slice for $r$ sufficiently small).  We will later be considering $G$-invariant metrics on $M$ which are not smooth, and hence it will be useful to have a smooth slice $(S_{x},\phi_{x})$ fixed beforehand as opposed to constructing one with respect to the given metric (the exponential map behaves badly on nonsmooth metrics).

Now if $M$ is a smooth manifold with a smooth proper action by $G$ and $(S_{x},\phi_{x})$ is a slice, then we can define a map $\tilde\phi_{x}:\mathds{R}^{m}\times G \rightarrow S_{x}\cdot G = G_{S}$ by $\tilde\phi_{x}(v,g) \equiv \phi_{x}^{-1}(v)\cdot g$.  By construction this map is equivariant with respect to the right $G$-action.  We also have a left $I_{x}$-action on $\mathds{R}^{m}\times G$ by $i\cdot(c,g) = (i^{-1}(v),i\cdot g)$ and with respect to this action we see that $\tilde\phi_{x}(i\cdot(v,g)) = \phi^{-1}_{x}(i^{-1}(v))\cdot ig = \phi^{-1}_{x}(v)\cdot i^{-1} ig = \tilde\phi_{x}(v,g)$.  This tells us that $\tilde\phi_{x}$ is left $I_{x}$-invariant and we have a well defined map $\tilde\phi_{x}: I_{x}\backslash(\mathds{R}^{m}\times G) \rightarrow G_{S}$, and for a proper action this map is a diffeomorphism.  Notice this structure follows uniquely given the slice map $\phi_{x}$, and since we will make use of it it will be convenient to give it a name:

\begin{definition}
We call the map $\tilde\phi_{x}:\mathds{R}^{m}\times G \rightarrow G_{S}$ the total slice map and $\mathds{R}^{m}\times G$ equipped with the right $G$-action and left $I_{x}$-action as before the total space.
\end{definition}

The term total space simply refers to the fact that $\tilde\phi_{x}$ defines an $I_{x}$-principal bundle structure over $G_{S}$.  When $M$ is equipped with a $G$ invariant metric we can put an associated geometry on the total space $\mathds{R}^{m}\times G$:

\begin{lemma}
Let $M$ and $G$ as before with $(S_{x},\phi_{x})$ a smooth slice.  Let $g$ be a $C^{k,\alpha}$ $G$-invariant metric on $M$, then there exists a $C^{k,\alpha}$ metric $\tilde g$ on $\mathds{R}^{m}\times G$ which is invariant under both the right $G$ and left $I_{x}$-actions, and for which the mapping $\tilde\phi_{x}$ is a Riemannian submersion.
\end{lemma}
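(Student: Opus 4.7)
The plan is to construct $\tilde g$ as the orthogonal sum of a pulled-back metric on a carefully chosen horizontal distribution and a fixed invariant metric on the vertical distribution of $\tilde\phi_{x}$. Since $G$ acts properly, the isotropy $I_{x}$ is compact, so by averaging over $I_{x}$ I can fix an $\operatorname{Ad}(I_{x})$-invariant inner product $\langle\cdot,\cdot\rangle$ on the Lie algebra $\mathfrak{g}$. This orthogonally splits $\mathfrak{g} = \mathfrak{i}_{x} \oplus \mathfrak{i}_{x}^{\perp}$, and since $\operatorname{Ad}(I_{x})$ preserves $\mathfrak{i}_{x}$ it preserves $\mathfrak{i}_{x}^{\perp}$ as well.

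Next I would define a horizontal distribution $\mathcal{H}$ on $\mathds{R}^{m} \times G$ by setting
\[
\mathcal{H}_{(v,g)} = T_{v}\mathds{R}^{m} \oplus \{(R_{g})_{*} X : X \in \mathfrak{i}_{x}^{\perp}\},
\]
i.e.\ the sum of the $\mathds{R}^{m}$ tangent space and the right-translates of $\mathfrak{i}_{x}^{\perp}$ by $g$. The vertical distribution $\mathcal{V} = \ker(\tilde\phi_{x})_{*}$ consists of the infinitesimal directions of the left $I_{x}$-action, i.e.\ vectors of the form $(-\xi_{\mathds{R}^{m}}(v),(R_{g})_{*}\xi)$ for $\xi \in \mathfrak{i}_{x}$. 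The orthogonal split of $\mathfrak{g}$ forces $\mathcal{H} \cap \mathcal{V} = 0$, and a dimension count gives $\mathcal{H} \oplus \mathcal{V} = T(\mathds{R}^{m} \times G)$. Right $G$-invariance of $\mathcal{H}$ is immediate since right translation preserves right-invariant vector fields, while left $I_{x}$-invariance follows from the identity $(L_{i})_{*} X^{R}_{g} = (\operatorname{Ad}(i)X)^{R}_{ig}$ together with the $\operatorname{Ad}(I_{x})$-invariance of $\mathfrak{i}_{x}^{\perp}$.

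With $\mathcal{H}$ and $\mathcal{V}$ in hand I would define $\tilde g$ by declaring $\mathcal{H} \perp \mathcal{V}$, using $(\tilde\phi_{x}|_{\mathcal{H}})^{*}g$ as the metric on $\mathcal{H}$ (well-defined since $\tilde\phi_{x}$ restricts to a fiberwise isomorphism on $\mathcal{H}$), and using the fixed inner product on $\mathfrak{i}_{x}$ as the fiber metric via the natural identification $\mathcal{V}_{(v,g)} \cong \mathfrak{i}_{x}$. That $\tilde g$ is $C^{k,\alpha}$ is automatic: $g$ is $C^{k,\alpha}$, while $\mathcal{H}$, $\mathcal{V}$, the identification $\mathcal{V} \cong \mathfrak{i}_{x}$, and $\tilde\phi_{x}$ itself are all smooth. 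That $\tilde\phi_{x}$ is a Riemannian submersion is built directly into the definition.

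The main obstacle will be the bookkeeping in the invariance checks. Right $G$-invariance of $\tilde\phi_{x}^{*}g$ follows from the $G$-equivariance of $\tilde\phi_{x}$ and the $G$-invariance of $g$, while left $I_{x}$-invariance follows because $\tilde\phi_{x}$ is constant on $I_{x}$-orbits. The $\operatorname{Ad}$-twists that appear when transporting right-invariant vector fields under left translation require care, but this is exactly the reason $\operatorname{Ad}(I_{x})$-invariance of the inner product on $\mathfrak{g}$ is the correct ingredient. A further subtlety is that the identification $\mathcal{V}_{(v,g)} \cong \mathfrak{i}_{x}$ transforms by $\operatorname{Ad}(i)$ under the left $I_{x}$-action, so $I_{x}$-invariance of the fiber metric requires precisely the invariance property built in.
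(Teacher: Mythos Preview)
Your proposal is correct and follows essentially the same strategy as the paper: construct a bi-invariant horizontal complement to the vertical $I_{x}$-distribution using an $\operatorname{Ad}(I_{x})$-invariant inner product on $\mathfrak{g}$, then define $\tilde g$ by lifting $g$ along the horizontal and placing the fixed invariant inner product on the vertical. The paper phrases this slightly more compactly by first choosing a right-$G$, left-$I_{x}$-invariant product metric $\bar g$ on $\mathds{R}^{m}\times G$ (which is exactly what your inner product on $\mathfrak{g}$ generates) and taking the $\bar g$-orthogonal horizontal, but the content is the same.
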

\begin{proof}
Begin by picking a metric on $G$ which is right invariant and invariant under the left action of $I_{x}$ ($I_{x}$ is compact so this is possible), and let $\bar g$ be the induced smooth product metric on $\mathds{R}^{m}\times G$.  So $\bar g$ is also invariant under the right $G$ and left $I_{x}$-actions, and in particular defines a smooth principal horizontal distribution with respect to the $I_{x}$-action which is also invariant under the right $G$ action.  Then we can alter $\bar g$ to a $C^{k,\alpha}$ metric $\tilde g$ by lifting the metric $g$ on $M$ along the horizontal distribution defined by $\bar g$.  By the construction of this horizontal distribution the metric $\tilde g$ is still invariant under the $G$ and $I_{x}$ actions and now $\tilde\phi_{x}$ is a Riemannian submersion.
\end{proof}

We will sometimes refer to the metric $\tilde g$ from the above as a total metric on $\mathds{R}^{m}\times G$.  A corollary of the above is a geometric version of the slice theorem:

\begin{corollary}\label{cor_geom_slice}
Let $(M^{n},g)$ be a $C^{k,\alpha}$ Riemannian Manifold with $G$ a Lie Group acting faithfully, properly and isometrically on $M$, and let $x\in M$.  Then for $m=n-dim G_{x}+dim I_{x}$ there exists a $C^{k,\alpha}$ metric $h$ on $\mathds{R}^{m}$ and an $O(m)$ action by $I_{x}$ which also acts isometrically with respect to $h$, such that a neighborhood of $x G \in M/G$ is isometric to $(\mathds{R}^{m},h)/I_{x}$.
\end{corollary}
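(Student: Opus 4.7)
The strategy is to invoke the preceding lemma on total metrics and then exchange the order of the two commuting quotients on the total space $\mathds{R}^{m}\times G$. First fix a smooth slice $(S_{x},\phi_{x})$ at $x$; such a slice exists by the ordinary (smooth) slice theorem applied to any auxiliary smooth $G$-invariant metric in a neighborhood of $x$, and by construction $m = n - dim G_{x} + dim I_{x}$ while the isotropy representation of $I_{x}$ on $T_{x}S_{x}\cong\mathds{R}^{m}$ is orthogonal (any $I_{x}$-invariant inner product works, as $I_{x}$ is compact). Now apply the preceding lemma to the given $C^{k,\alpha}$ metric $g$ to obtain a $C^{k,\alpha}$ total metric $\tilde g$ on $\mathds{R}^{m}\times G$ that is simultaneously invariant under the right $G$-action $(v,g)\cdot g' = (v,gg')$ and the left $I_{x}$-action $i\cdot(v,g) = (i^{-1}v,ig)$, and for which $\tilde\phi_{x}:(\mathds{R}^{m}\times G,\tilde g)\to(G_{S},g)$ is a Riemannian submersion with $I_{x}$-orbits as fibers.

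Next observe that the right $G$-action is free, is trivial on the $\mathds{R}^{m}$-factor, and commutes with the left $I_{x}$-action (one checks $(i\cdot(v,g))\cdot g' = (i^{-1}v, igg') = i\cdot((v,g)\cdot g')$). Hence the $G$-quotient of $\mathds{R}^{m}\times G$ is canonically $\mathds{R}^{m}$, and since $\tilde g$ is $G$-invariant it descends to a Riemannian metric $h$ on $\mathds{R}^{m}$. Its regularity is $C^{k,\alpha}$: the vertical distribution of the smooth principal $G$-bundle is itself smooth, its orthogonal complement with respect to $\tilde g$ is therefore $C^{k,\alpha}$ (orthogonal projection is algebraic in the metric coefficients), and $h$ is obtained by pushing forward the restriction of $\tilde g$ to these horizontals. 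Because the two actions commute, the left $I_{x}$-action descends to an action on $(\mathds{R}^{m},h)$; reading off the formula shows it is precisely the slice-representation action (up to inversion, which remains $O(m)$-valued), and it is isometric because $\tilde g$ was $I_{x}$-invariant.

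It remains to identify $(G_{S},g)/G$ with $(\mathds{R}^{m},h)/I_{x}$ as metric spaces, which is achieved by exchanging the order of the two quotients. Since $G$ and $I_{x}$ act by commuting isometries on $(\mathds{R}^{m}\times G,\tilde g)$, the combined quotient may be formed in either order: collapsing $I_{x}$ first reconstructs $(G_{S},g)$ via $\tilde\phi_{x}$, after which the $G$-quotient yields the desired neighborhood of $xG$ in $M/G$; collapsing $G$ first yields $(\mathds{R}^{m},h)$, after which the $I_{x}$-quotient yields $(\mathds{R}^{m},h)/I_{x}$. Both routes produce the same length-space metric on the double quotient, so the two outcomes agree as metric spaces. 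I expect this final exchange at the metric-space level to be the main obstacle: it rests on the variational description of the quotient distance from a proper isometric action as the infimum of horizontal-path lengths in the total space, which together with the commutativity of the two actions makes the double-quotient distance manifestly independent of the order in which the actions are collapsed.
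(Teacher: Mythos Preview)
Your proposal is correct and follows essentially the same route as the paper: construct the total metric $\tilde g$ on $\mathds{R}^{m}\times G$ from the preceding lemma, then swap the order of the commuting $I_{x}$- and $G$-quotients to identify $(G_{S},g)/G$ with $(\mathds{R}^{m},h)/I_{x}$. You supply more detail than the paper does on the $C^{k,\alpha}$ regularity of $h$ and on why the double quotient is order-independent at the metric level, but the argument is the same.
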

\begin{remark}
Topologically this is just the standard slice theorem, however it is not in general true that if you restrict the metric from $M$ to a slice $S$ that the quotient of $S$ by $I$ will be isometric to a neighborhood of $M/G$, hence the metric $h$ must be constructed by other means.
\end{remark}
\begin{proof}
Let $(S_{x},\phi_{x})$ be a smooth slice at $x$ and let $\tilde g$ be a total metric on the total space $\mathds{R}^{m}\times G$ as in the last lemma.  Because the action is proper and the right action of $G$ and left action by $I$ commute we have that $(\mathds{R}^{m}\times G,\tilde g)/I/G\stackrel{isom}{\approx} (\mathds{R}^{m}\times G,\tilde g)/G/I$.  But $(\mathds{R}^{m}\times G,\tilde g)/I/G \stackrel{isom}{\approx} (G_{S},g)/G$ is exactly isometric to a neighborhood of $x G$ in $M/G$ and $(\mathds{R}^{m}\times G,\tilde g)/G \stackrel{isom}{\approx} (\mathds{R}^{m}, h)$ for some $C^{k,\alpha}$ metric $\tilde h$.  Hence a neighborhood of $x G$ is isometric to $(\mathds{R}^{m}, h)/I$, as claimed.
\end{proof}

Now consider the trivial vector bundle $G_{S}\times\mathfrak{g}$ over $G_{S}$, where $\mathfrak{g}$ is the lie algebra of $G$.  If $M$ is equipped with a $G$-invariant metric $g$ then using a total metric $\tilde g$ on $\mathds{R}^{m}\times G$ we can construct a fiber metric $\tilde h$ on this vector bundle as follows:  Let $x\in G_{S}$ and $\xi, \eta\in\mathfrak{g}$ be elements of the vector bundle above $x$.  Let $\tilde x\in \mathds{R}^{m}\times G$ be a lift of $x$ to the total space and $\tilde e_{*}:\mathfrak{g}\rightarrow T(\mathds{R}^{m}\times G)$ be the pushforward derived from the derivative of the right $G$ action on $\mathds{R}^{m}\times G$.  Then we can define $\tilde h_{x}(\xi,\eta) = \tilde g_{\tilde x}(\tilde e_{*}(\xi),\tilde e_{*}(\eta))$, where we note that this is independent of the lift $\tilde x$ and is a $C^{k,\alpha}$ fiber metric on the vector bundle $G_{S}\times\mathfrak{g}$.  Now there is a vector bundle mapping $e_{*}: G_{S}\times\mathfrak{g}\rightarrow TG_{S}$ where $e_{*}(x,\xi)\in T_{x}G_{S}$ is the pushforward of $\xi$ under the differential of the $G$ action on $G_{S}$ at $x$.  If $\xi \in i_{x}$ (which by definition is the lie algebra of the isotropy group $I_{x}$) then $e_{*}(x,\xi)=0$, and more interestingly at each $x$ if we restrict $e_{*}$ to $i_{x}^{\bot}$, the perpendicular is taken with respect to the metric $\tilde h$, then $e_{*}$ becomes a linear isometry.  Thus the metric $\tilde h$ has given us a nondegenerate extension of the noncontinuous pullback metric from $e_{*}$.

\begin{definition}
We call the fiber metric $\tilde h$ constructed above on $G_{S}\times\mathfrak{g}$ an adjoint metric.
\end{definition}
\begin{remark}
Note that if $G$ acts freely then $\tilde h$ is just the metric induced on the adjoint bundle by a right invariant metric.
\end{remark}

\section{Orbifolds}\label{sec_orb}
The goal of this section is to show the following

\begin{theorem}\label{thm_main_orb}
Let $(O,g)$ and $(U,h)$ be $C^{k,\alpha}$ Riemannian orbifolds, $k\in \mathds{N}$ and $0<\alpha<1$.  Let $\phi:O\rightarrow U$ be a distance preserving homeomorphism.  Then $\phi$ is $C^{k+1,\alpha}$ in the orbifold sense.
\end{theorem}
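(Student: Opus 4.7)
The plan is to reduce to the classical Myers--Steenrod theorem for Riemannian manifolds with $C^{k,\alpha}$ metrics by passing to local orbifold covers. Let $p\in O$ with isotropy $\Gamma_p$ and set $q=\phi(p)\in U$ with isotropy $\Gamma_q$. Choose uniformizing orbifold charts
\[
\pi_O:(\tilde B,\tilde g)\to (V,g), \qquad \pi_U:(\tilde C,\tilde h)\to (W,h),
\]
with $V$ a neighborhood of $p$, $W$ a neighborhood of $q$ chosen so that $\phi(V)\subseteq W$, and with $\tilde B,\tilde C$ open sets in $\mathbb{R}^n$ carrying $\Gamma_p$- and $\Gamma_q$-invariant $C^{k,\alpha}$ metrics. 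Fix lifts $\tilde p\in\pi_O^{-1}(p)$, $\tilde q\in\pi_U^{-1}(q)$. The entire proof is to produce a $\Gamma_p$-equivariant $C^{k+1,\alpha}$ lift $\tilde\phi:(\tilde B,\tilde g)\to(\tilde C,\tilde h)$ of $\phi$ with $\tilde\phi(\tilde p)=\tilde q$.

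The first step is to identify the isotropy groups using the metric. Because $\phi$ is a distance-preserving homeomorphism, it commutes with the formation of tangent cones in the Gromov--Hausdorff sense. The tangent cone of $(O,g)$ at $p$ is isometric to $(\mathbb{R}^n,g_{\tilde p})/\Gamma_p$, which determines $\Gamma_p\leq O(n)$ up to conjugation. Hence $\Gamma_q$ is conjugate to $\Gamma_p$, and after identifying $\tilde B,\tilde C$ with $\Gamma$-invariant balls for a common finite group $\Gamma=\Gamma_p\cong\Gamma_q$, the linearized action of $\Gamma$ at $\tilde p$ and at $\tilde q$ can be taken to coincide. By the same reasoning, $\phi$ maps the stratum of points with isotropy type $(H)$ in $O$ homeomorphically onto the corresponding stratum in $U$; in particular it carries the manifold part $O^{reg}$ onto $U^{reg}$.

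The second step is to build the lift $\tilde\phi$. On $O^{reg}\cap V$ and $U^{reg}\cap W$ the classical Myers--Steenrod theorem (applied to the $C^{k,\alpha}$ Riemannian manifolds $O^{reg},U^{reg}$) shows that $\phi|_{O^{reg}\cap V}$ is a $C^{k+1,\alpha}$ isometry. Since $\pi_O$ restricted to $\tilde B^{reg}=\tilde B\setminus\mathrm{Fix}(\Gamma)$ is a covering map onto $V\cap O^{reg}$, path-lifting from $\tilde p$ produces a $C^{k+1,\alpha}$ lift
\[
\tilde\phi^{reg}:\tilde B^{reg}\to\tilde C^{reg}, \qquad \pi_U\circ\tilde\phi^{reg}=\phi\circ\pi_O|_{\tilde B^{reg}}.
\]
Uniqueness of the lift, together with $\Gamma$-invariance of $\phi$ and the agreement of the $\Gamma$-action at $\tilde p$ and $\tilde q$, forces $\tilde\phi^{reg}$ to be $\Gamma$-equivariant (choosing $\tilde\phi^{reg}(\tilde p)=\tilde q$ once in one sheet and propagating by equivariance). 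The key observation is that for $\tilde x,\tilde y\in \tilde B$ lying in the same Dirichlet fundamental domain for $\Gamma$ based at $\tilde p$ we have $d_{\tilde g}(\tilde x,\tilde y)=d_g(\pi_O\tilde x,\pi_O\tilde y)$, and similarly for $\tilde C$; combined with the distance-preservation of $\phi$, this shows that $\tilde\phi^{reg}$ is locally distance-preserving in a neighborhood of any point of $\tilde B^{reg}$.

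For the third step, extend $\tilde\phi^{reg}$ continuously across the singular locus $\mathrm{Fix}(\Gamma)\subset\tilde B$. Given $\tilde x_0\in\mathrm{Fix}(\Gamma)$ and a sequence $\tilde x_i\to\tilde x_0$ in $\tilde B^{reg}$, the points $\pi_U(\tilde\phi^{reg}(\tilde x_i))=\phi(\pi_O(\tilde x_i))$ converge to $\phi(\pi_O(\tilde x_0))$; since $\pi_U$ is proper and the fibers are finite $\Gamma$-orbits, and since $\tilde\phi^{reg}$ is locally distance-preserving, the sequence $\tilde\phi^{reg}(\tilde x_i)$ is Cauchy (once we pass to a tail lying in a single Dirichlet cell), so it converges. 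Any two limit points differ by an element of $\Gamma_{\tilde q}\cap \Gamma$, but equivariance pins down a unique extension $\tilde\phi(\tilde x_0)$. This gives a continuous lift $\tilde\phi:\tilde B\to\tilde C$ that is locally distance-preserving on $\tilde B^{reg}$, hence on $\tilde B$ by continuity of the metric and density of $\tilde B^{reg}$.

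The final step is the regularity conclusion: $\tilde\phi$ is a locally distance-preserving map between two $C^{k,\alpha}$ Riemannian manifolds, so the Calabi--Hartman version of Myers--Steenrod yields $\tilde\phi\in C^{k+1,\alpha}$. Passing to the quotient by $\Gamma$, this is precisely the statement that $\phi$ is $C^{k+1,\alpha}$ in the orbifold sense near $p$. Since $p$ was arbitrary, the theorem follows.

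The principal obstacle is the third step, namely the continuous extension across the singular locus and the verification that the extension is still distance-preserving on the smooth cover. The subtlety is that $\tilde B^{reg}$ need not be simply connected (think of a reflection subgroup in $\Gamma$, whose fixed set is of codimension one), so one cannot globally lift $\phi$ by a single monodromy argument; one must instead lift on each connected component of $\tilde B^{reg}$ and then use the equivariance under $\Gamma$ and the tangent-cone identification of $\Gamma_p\cong\Gamma_q$ to glue the local lifts consistently into a well-defined continuous map on all of $\tilde B$. The local-distance-preserving property across the singular set is what ultimately allows the Calabi--Hartman regularity result to upgrade continuity to $C^{k+1,\alpha}$ smoothness on the cover.
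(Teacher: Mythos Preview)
Your overall architecture matches the paper's: reduce to a local chart, pass to the tangent cone to compare isotropy, lift $\phi$ on the regular set, extend over the singular locus, and then invoke Calabi--Hartman. But two of the steps you flag as routine are where essentially all of the paper's work lives, and as written your argument does not close them.

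First, the lifting in Step~2 is not free. Even restricting to a single connected component $D_1$ of $\tilde B^{reg}$, the fundamental group is not trivial: the paper computes $\pi_1(D_1)\cong\mathds{Z}^{k}$ (loops around the codimension-two fixed sets) and $\pi_1((\mathds{R}^{n}/\Gamma)_{reg})\cong\mathds{Z}^{k}\rtimes(\Gamma/R)$, where $R\leq\Gamma$ is the reflection subgroup. The existence of a lift $\tilde\phi^{reg}:D_1\to D_2\subseteq\tilde C^{reg}$ requires the lifting criterion $(\phi\circ\pi_O)_*\pi_1(D_1)\subseteq(\pi_U)_*\pi_1(D_2)$, and the paper verifies this by observing that an isomorphism between two groups of the form $\mathds{Z}^{k}\rtimes(\text{finite})$ must carry the $\mathds{Z}^{k}$ factor to the $\mathds{Z}^{k}$ factor. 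Your ``path-lifting from $\tilde p$'' skips this entirely (and note $\tilde p\notin\tilde B^{reg}$ when $\Gamma_p\neq\{e\}$, so the basepoint itself needs care).

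Second, the glueing across components and the extension over the singular locus are where the reflection subgroup really enters, and ``equivariance pins down a unique extension'' does not do it. The paper first proves the \emph{flat} statement (its Proposition~B.2): for $\mathds{R}^{n}/\Gamma_1\to\mathds{R}^{n}/\Gamma_2$ it shows that each $D_i$ is a fundamental domain for the reflection subgroup $R_i$, lifts $\phi$ to an affine isometry $\tilde\phi=A(\cdot)+b$ on a single $D_1\to D_2$, and then proves a separate lemma that such an affine map automatically conjugates $R_1$ to $R_2$ and hence is a global lift on $\cup_r rD_1\to\cup_r rD_2$. Only after this is the curved case handled by writing $\phi=\exp_2\circ\Lambda\circ\exp_1^{-1}$, blowing up to identify $\Lambda$ with an isometry of the flat quotients, applying the flat lift just constructed, and composing with the (equivariant) exponential maps. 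Your Step~1 also leans on ``the tangent cone determines $\Gamma_p$ up to conjugacy,'' but that fact is precisely a corollary of the flat proposition you have not proved, so invoking it up front is circular.

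In short: the skeleton is right, but you need the explicit fundamental-group computation to justify the lift on a component, and the reflection-group/fundamental-domain lemma to propagate the lift across components and identify $\Gamma_1$ with $\Gamma_2$. Those are the substantive ingredients, and the paper carries them out in the flat model first before bootstrapping via $\exp$.
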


Recall that being $C^{k+1,\alpha}$ in the orbifold sense implies both the existence of local lifts of $\phi$ to the Euclidean orbifold covers and regularity of this local lift.  Once a local lift is constructed the regularity question is standard, see for instance \cite{CH} and \cite{T}, and so most of this section is dedicated to the construction of such a lift.  Our main use of this theorem is the following proposition.

\begin{prop}\label{finite_ext_quot}
Let $(M,g)$ be a $C^{k,\alpha}$ Riemannian Manifold and let $\tilde{G}$ be a Lie Group acting properly and isometrically on $M$.  Assume $\tilde{G}=G\rtimes F$ where $F$ is a finite group and $(M,g)/G$ is a $C^{k,\alpha}$ Riemannian orbifold.  Then $(M,g)/\tilde{G}$ is a $C^{k,\alpha}$ Riemannian orbifold.
\end{prop}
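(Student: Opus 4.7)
The plan is to reduce the problem to lifting finite-group actions on Riemannian orbifolds by isometries, and then to invoke Theorem \ref{thm_main_orb} to promote such isometric actions to smooth actions on the Euclidean orbifold covers.

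First I would set up the descent. Because $G$ is normal in $\tilde G = G\rtimes F$, the $\tilde G$-action on $M$ induces a well-defined action of $F$ on $O \equiv M/G$ by $f\cdot [x] = [f\cdot x]$: if $x' = gx$ then $fx' = (fgf^{-1})fx$ and $fgf^{-1}\in G$. Since $\tilde G$ acts by isometries on $M$, this $F$-action on $O$ is by isometries of the quotient metric space, and $(O,d_O)/F$ is canonically isometric to $M/\tilde G$. Thus it suffices to show that if $(O,g)$ is a $C^{k,\alpha}$ Riemannian orbifold and $F$ is a finite group acting on $O$ by isometries, then $O/F$ is a $C^{k,\alpha}$ Riemannian orbifold.

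Fix $p\in O/F$ and pick a lift $\tilde p\in O$. Because $F$ is finite I may replace $F$ by its isotropy $F_{\tilde p}$ after restricting to a small, $F_{\tilde p}$-invariant neighborhood of $\tilde p$ whose $F$-translates are disjoint; locally $O/F$ is isometric to $V/F_{\tilde p}$ for such a $V$. By the orbifold hypothesis there is an orbifold chart at $\tilde p$: a finite group $\Gamma\leq O(n)$ and an isometry $\pi\colon (B_\epsilon(0),\tilde g)/\Gamma \to (V,g|_V)$ with $\pi(0)=\tilde p$ and $\tilde g$ a $C^{k,\alpha}$ $\Gamma$-invariant metric on $B_\epsilon(0)\subseteq \mathds{R}^n$. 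Each $f\in F_{\tilde p}$ restricts to a distance-preserving homeomorphism of $V$, so via $\pi$ it becomes a distance-preserving self-homeomorphism of the Riemannian orbifold $(B_\epsilon(0),\tilde g)/\Gamma$ fixing the image of $0$.

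Here is where Theorem \ref{thm_main_orb} enters, and this is the key step: each such $f$ admits a smooth (in fact $C^{k+1,\alpha}$) lift to an orbifold map, and for a finite quotient this means $f$ can be lifted to a genuine isometry $\tilde f\colon (B_\epsilon(0),\tilde g)\to (B_\epsilon(0),\tilde g)$, uniquely determined up to left composition with an element of $\Gamma$. Collecting all such lifts over all $f\in F_{\tilde p}$ gives a group $\tilde F\leq \mathrm{Isom}(B_\epsilon(0),\tilde g)$ sitting in a short exact sequence
\[
1 \longrightarrow \Gamma \longrightarrow \tilde F \longrightarrow F_{\tilde p} \longrightarrow 1,
\]
so $\tilde F$ is finite. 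Taking the quotient in two steps gives an isometry $(B_\epsilon(0),\tilde g)/\tilde F \approx V/F_{\tilde p}$, which is the required orbifold chart around $p$; linearizing $\tilde F$ at $0$ via the exponential map in the usual way (or by conjugating by an isometry to a linear action) realizes $\tilde F$ as a finite subgroup of $O(n)$ acting on a small ball, completing the $C^{k,\alpha}$ Riemannian orbifold structure at $p$.

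The main obstacle is producing the smooth lift $\tilde f$: a priori $f$ is only a metric-space isometry of $V$, and it is not automatic that it lifts from the quotient $B_\epsilon(0)/\Gamma$ to the cover $B_\epsilon(0)$ as a smooth isometry. Theorem \ref{thm_main_orb} is precisely what supplies this lift and its regularity, after which finiteness of $\tilde F$ and the group-theoretic bookkeeping above are straightforward.
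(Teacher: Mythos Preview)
Your proposal is correct and follows essentially the same route as the paper: descend to an $F$-action by isometries on the orbifold $M/G$, localize to the isotropy $F_{\tilde p}$ at a point, pull back each $f\in F_{\tilde p}$ to a distance-preserving self-map of the chart $(B_\epsilon(0),\tilde g)/\Gamma$, and invoke Theorem \ref{thm_main_orb} to produce genuine isometric lifts $\tilde f$ on $B_\epsilon(0)$; the collection of all lifts is then a finite group whose quotient gives the orbifold chart. The paper argues normality of $\Gamma$ in $\tilde F$ directly by comparing the lifts $\tilde f\circ h$ and $h'\circ\tilde f$ on the regular set, whereas your short exact sequence $1\to\Gamma\to\tilde F\to F_{\tilde p}\to 1$ packages the same content more cleanly; and you make explicit the final linearization step (conjugating $\tilde F$ into $O(n)$ via the exponential map), which the paper leaves implicit.
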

\begin{proof}
Since $G$ is a normal subgroup of $\tilde{G}$ we see that there is an induced action of $F$ by distance preserving homeomorphisms on $M/G$ and that $(M,g)/\tilde{G}$ is isometric to $(M/G)/F$.  Let $x\in M/G$ and $F_{x}$ the isotropy group of $F$ at $x$.  Because $F$ is discrete we can pick $r>0$ such that $\forall f\in F$ if $B_{r}(f\cdot x)\cap B_{r}(x)\neq \emptyset$ then $f\in F_{x}$, thus $B_{r}(xF)$ in $(M/G)/F$ is isometric to $B_{r}(x)/F_{x}$.  After possibly making $r$ smaller we can assume that $B_{r}(x)$ is isometric to $(\mathds{R}^{m},\tilde{g})/H$ where $\tilde{g}$ is a $C^{k,\alpha}$ metric on $\mathds{R}^{m}$ and $H\leq O(m)$ is a finite group which acts isometrically with respect to $\tilde{g}$. We have by Theorem \ref{thm_main_orb} that each $f\in F_{x}$ lifts to a $\tilde f$ which is an isometric action on $(\mathds{R}^{m},\tilde{g})$.  Let $y\in \mathds{R}^{m}_{reg}$, where $\mathds{R}^{m}_{reg}$ is the open dense subset that $H$ acts freely on.  Then each lift $\tilde{f}$ of $f$ is well defined by the value $\tilde{f}(y)$ (this follows because as in Proposition \ref{prop_orb_2} we have that $\tilde f$ maps $\mathds{R}^{m}_{reg}$ to itself and so the differential of $\tilde{f}$ at $y$ is determined by $f$, since $\tilde{f}$ is an isometry this determines $\tilde{f}$).  We see that for each $h\in H$ that $\tilde{f}(h\cdot x)$ and $h\cdot\tilde{f}(x)$ are also lifts of $f$.  Hence for each $h_{1}\in H$ there must exist a unique $h_{2}\in H$ such that $\tilde{f}(h_{1}\cdot x) = h_{2}\cdot\tilde{f}(x)$ (namely  pick $h_{2}$ such that $\tilde{f}(h_{1}\cdot y) = h_{2}\cdot\tilde{f}(y)$, then this must hold for all $x$ since both are lifts).  So if we consider the collection $\tilde F$ of all lifts of all elements of $F$ then this is exactly the statement that $H$ is a normal subgroup of $\tilde F$ and $\tilde{F} = H\rtimes F$.  Hence  $\mathds{R}^{m}/\tilde{F} =(\mathds{R}^{\tilde{m}}/H)/F$ and the quotient is a Riemannian orbifold.
\end{proof}

We begin by recalling some basic points about the geometry of Riemannian Orbifolds.  The Riemannian metric on the orbifold induces a stratified Riemannian metric on the orbifold as a stratified space, hence we can define a natural length space distance function on the orbifold.  At each point it is the case that there is a neighborhood such that this distance function agrees with the quotient distance function from a chart.  We call a curve a smooth geodesic if in a neighborhood of each point there is a lift of the curve to a smooth geodesic.  Such a smooth geodesic curve is uniquely defined by its value and tangent vector at a point (where of course the tangent space is the quotient of $\mathds{R}^{n}$ by the local group at that point).  A smooth geodesic need not be locally distance minimizing (in fact, it is local minimizing near a point iff in a neighborhood of that point it lies in a unique stratum), but it is at least always locally minimizing in one direction.  That is is say, if $\gamma(t)$ is a smooth geodesic then $\exists$ $\epsilon>0$ such that $\gamma|_{[0,\epsilon]}$ and $\gamma|_{[-\epsilon,0]}$ are segments, even though $\gamma|_{[-\epsilon,\epsilon]}$ may not be.  It can also be shown that a segment is a smooth geodesic.

Theorem \ref{thm_main_orb} will follow immediately once we establish the following local version

\begin{prop}\label{prop_orb_1}
Let $(\mathds{R}^{n},g_{1})$ $(\mathds{R}^{n},g_{2})$ be $C^{k,\alpha}$ and let $\Gamma_{1}$, $\Gamma_{2}\leq O(n)$ be finite subgroups which act isometrically on $g_{1},g_{2}$, respectively.  Let $\phi:(\mathds{R}^{n},g_{1})/\Gamma_{1} \rightarrow (\mathds{R}^{n},g_{2})/\Gamma_{2}$ be a distance preserving homeomorphism with $\phi(0)=0$.  Then $\phi$ is $C^{k+1,\alpha}$ in the orbifold sense and $\Gamma_{1}$, $\Gamma_{2}$ are conjugate.
\end{prop}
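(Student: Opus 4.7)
The plan is to construct a distance-preserving local lift $\tilde\phi:(U_1,g_1)\to(U_2,g_2)$ of $\phi$ on a neighborhood of $0\in\mathds{R}^n$ which conjugates the $\Gamma_1$-action to the $\Gamma_2$-action, and then invoke the classical Calabi--Hartman / Myers--Steenrod regularity theorem for distance-preserving maps between $C^{k,\alpha}$ Riemannian manifolds (see \cite{CH}, \cite{T}) to conclude that $\tilde\phi$ is $C^{k+1,\alpha}$, which gives the orbifold regularity of $\phi$. The conjugacy of $\Gamma_1$ and $\Gamma_2$ will fall out of the equivariance relation satisfied by $\tilde\phi$.

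The construction proceeds through a tangent cone analysis. Since $g_i(0)$ is a flat inner product and $\Gamma_i$ acts linearly and isometrically, the tangent cone at $0$ of $(\mathds{R}^n,g_i)/\Gamma_i$ is canonically isometric to $\mathds{R}^n/\Gamma_i$ with the Euclidean quotient metric; after an orthonormal change of basis in each factor we identify both tangent cones with the standard $\mathds{R}^n/\Gamma_i$. Because $\phi$ is a pointed distance-preserving homeomorphism, the induced map on tangent cones at $0$ produces an isometry $\psi:\mathds{R}^n/\Gamma_1\to\mathds{R}^n/\Gamma_2$ fixing the origin.

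The heart of the proof is to lift $\psi$ to a linear orthogonal $\tilde L\in O(n)$ with $\tilde L\Gamma_1\tilde L^{-1}=\Gamma_2$. On the regular stratum the quotient maps $\pi_i$ are local isometries, so near any chosen regular point $p\in\mathds{R}^n$ one can lift $\psi$ locally to an isometry between open subsets of flat Euclidean $\mathds{R}^n$. By rigidity of isometries of Euclidean space, this local lift extends uniquely to a global affine isometry $\tilde L:\mathds{R}^n\to\mathds{R}^n$. The conditions $\psi(0)=0$ and $\pi_i^{-1}(0)=\{0\}$ then force $\tilde L(0)=0$, so $\tilde L$ is linear orthogonal. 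For any $\gamma_1\in\Gamma_1$ both $\tilde L\circ\gamma_1$ and $\tilde L$ are lifts of $\psi$ (since $\pi_1\circ\gamma_1=\pi_1$), hence differ by a deck transformation $\gamma_2\in\Gamma_2$, yielding $\tilde L\Gamma_1\tilde L^{-1}=\Gamma_2$ and in particular the conjugacy assertion.

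I finally build $\tilde\phi$ via exponential charts at $0$. Because each $\Gamma_i$ acts linearly and isometrically with respect to $g_i$, the exponential map $\exp^{g_i}_0$ is $\Gamma_i$-equivariant and restricts to a diffeomorphism of a small ball in $T_0\mathds{R}^n$ onto a neighborhood of $0$. Define
\[
\tilde\phi:=\exp^{g_2}_0\circ\tilde L\circ(\exp^{g_1}_0)^{-1}.
\]
The conjugation relation for $\tilde L$ together with equivariance of the exponentials shows that $\tilde\phi$ is $(\Gamma_1,\Gamma_2)$-equivariant, so it descends to a map $\bar\phi$ between the quotient balls. To confirm $\bar\phi=\phi$ I trace along radial geodesics: $\phi$ sends the orbifold geodesic $t\mapsto\pi_1(\exp^{g_1}_0(tv))$ to a length-preserving orbifold geodesic from $0$ whose initial tangent class must be $\psi([v])=[\tilde Lv]$ by the construction of $\psi$, and therefore equals $t\mapsto\pi_2(\exp^{g_2}_0(t\tilde Lv))$, which is precisely $\bar\phi$ evaluated along the same curve. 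Since such radial geodesics fill a neighborhood of $0$, $\phi=\bar\phi$ there, giving the desired lift. The main technical obstacle is the lifting step $\psi\mapsto\tilde L$: the quotient map fails to be a local isometry exactly at the fixed origin where $\phi$ is based, so one must construct the lift off-center on the regular stratum and then exploit Euclidean rigidity to push it through the singular point at $0$.
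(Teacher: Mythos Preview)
Your overall architecture matches the paper's: pass to the tangent cone map $\psi$ (the paper calls it $\Lambda$), lift it to a linear map, and then conjugate by the equivariant exponential maps to get a lift $\tilde\phi$ whose regularity follows from Calabi--Hartman. The paper establishes that $\Lambda$ is distance-preserving by a rescaling/Arzel\`a--Ascoli argument identical in spirit to your tangent cone construction, and then composes with the exponential maps exactly as you do.

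The gap is in your lifting step $\psi\mapsto\tilde L$, which is precisely where the paper invests all of its effort (its Proposition on flat quotients together with two preparatory lemmas on reflection groups). Your argument lifts $\psi$ locally near a regular point $p$ and then extends by Euclidean rigidity to an affine map $\tilde L$. The problem is that you then use the relation $\pi_2\circ\tilde L=\psi\circ\pi_1$ \emph{globally}---first to conclude $\tilde L(0)=0$, and then to run the deck-transformation argument $\tilde L\circ\gamma_1=\gamma_2\circ\tilde L$---but you have only established that relation on a neighborhood of $p$. A standard open--closed argument propagates it along the connected component of $\mathds{R}^n_{\mathrm{reg}}$ containing $p$, but no further: when $\Gamma_1$ contains reflections the regular set is disconnected (the fixed hyperplanes of the reflections separate it), and neither $0$ nor the images $\gamma_1(p)$ for general $\gamma_1$ need lie in that component. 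So both conclusions are unjustified as written.

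The paper resolves this by analyzing the reflection subgroup $R\le\Gamma$: it shows that each connected component of $\mathds{R}^n_{\mathrm{reg}}$ is a fundamental domain for $R$, proves directly that an isometry between two such convex fundamental domains extends to an affine map that intertwines the two reflection groups (this is where the real work is), and then uses a fundamental-group computation $\pi_1((\mathds{R}^n/\Gamma)_{\mathrm{reg}})=\mathds{Z}^k\rtimes(\Gamma/R)$ to handle the residual $\Gamma/R$ part. You have correctly identified that the singular locus is the obstruction, but the obstruction is not only at the cone point $0$: it is the entire codimension-one reflection locus, and pushing the lift across those walls is the substance of the argument you are missing.
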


The first step is to establish the above for standard quotient geometries:

\begin{prop}\label{prop_orb_2}
Let $\Gamma_{1}$,$\Gamma_{2}\leq O(n)$ be finite subgroups.  Let $\phi:\mathds{R}^{n}/\Gamma_{1} \rightarrow \mathds{R}^{n}/\Gamma_{2}$ be a distance preserving homeomorphism.  Then there $\exists$ an affine isometric lift $\tilde{\phi}:\mathds{R}^{n}\rightarrow \mathds{R}^{n}$ of $\phi$ and $A\in O(n)$ such that $A\Gamma_{1}A^{-1} =
\Gamma_{2}$.
\end{prop}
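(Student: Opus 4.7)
My plan is: (i) identify the regular set and construct a local isometric lift of $\phi$ near a regular base point; (ii) extend to a global affine isometry of $\mathds{R}^n$ by Euclidean rigidity; (iii) establish the conjugacy $A\Gamma_1A^{-1}=\Gamma_2$ and upgrade the local identity to the global lift identity $\pi_2\circ\tilde\phi=\phi\circ\pi_1$.

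Let $R_i:=\mathds{R}^n\setminus\bigcup_{e\neq g\in\Gamma_i}\mathrm{Fix}(g)$ be the regular set, so that $\pi_i|_{R_i}$ is a $\Gamma_i$-Galois covering by local isometries onto the flat regular stratum $\pi_i(R_i)$. Since $\phi$ preserves tangent cones---the tangent cone at $\pi_i(\tilde x)$ is $\mathds{R}^n/I_{\tilde x}$, which equals $\mathds{R}^n$ exactly when $\tilde x\in R_i$---$\phi$ must map $\pi_1(R_1)$ onto $\pi_2(R_2)$. Fix a regular $\tilde p\in R_1$, pick any lift $\tilde q\in R_2$ of $\phi(\pi_1(\tilde p))$, and take $r>0$ small enough that both $\pi_1|_{B_r(\tilde p)}$ and $\pi_2|_{B_r(\tilde q)}$ are isometries onto their images. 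The composition $\tilde\phi_0:=\pi_2|_{B_r(\tilde q)}^{-1}\circ\phi\circ\pi_1|_{B_r(\tilde p)}$ is then an isometry between Euclidean balls, which by classical Euclidean rigidity extends uniquely to a global affine isometry $\tilde\phi(x)=Ax+b$ with $A\in O(n)$.

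To upgrade to a global identity, let $C_0$ be the connected component of $\tilde p$ in the open set $R_1\cap\tilde\phi^{-1}(R_2)$. On $C_0$ both $\pi_2\circ\tilde\phi$ and $\phi\circ\pi_1$ are local Riemannian isometries into the flat manifold $\pi_2(R_2)$ agreeing on $B_r(\tilde p)$; the identity principle for Riemannian isometries gives equality throughout $C_0$. For any $g\in\Gamma_1$, repeating the local-lift construction at $g\tilde p$ with an appropriately chosen target in $\Gamma_2\cdot\tilde q$, and using both the identity principle on $gC_0$ and the rigidity of affine maps (two affine maps agreeing on any open set are equal), one extracts a uniquely determined $h(g)\in\Gamma_2$ with $\tilde\phi\circ g=h(g)\circ\tilde\phi$ globally; equivalently $AgA^{-1}=h(g)\in\Gamma_2$. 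The symmetric argument applied to $\phi^{-1}$ shows $g\mapsto h(g)$ is an isomorphism, so $A\Gamma_1A^{-1}=\Gamma_2$.

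With the conjugacy in hand $\pi_2\circ\tilde\phi$ becomes $\Gamma_1$-invariant and descends to an isometry $\Phi\colon\mathds{R}^n/\Gamma_1\to\mathds{R}^n/\Gamma_2$; since $\Phi$ agrees with $\phi$ on a neighborhood of $\pi_1(\tilde p)$, unique continuation of isometries of Riemannian orbifolds (two isometries agreeing on an open set of a connected orbifold are equal) yields $\Phi=\phi$ everywhere, i.e.\ $\pi_2\circ\tilde\phi=\phi\circ\pi_1$ on $\mathds{R}^n$. The main obstacle is the conjugacy step in paragraph three: translating the locally valid lift at $\tilde p$ into a global intertwining of the two group actions. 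The trick is to combine the identity principle for Riemannian isometries (which controls matters on a single component of the regular set) with the rigidity of affine maps (two affine maps agreeing on any open set are equal), so that the local match near $\tilde p$ pins down the conjugating element $h(g)\in\Gamma_2$ for each $g$, bypassing an explicit analysis of the potentially complicated component structure of $R_i$.
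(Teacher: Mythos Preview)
Your argument is clean when neither $\Gamma_1$ nor $\Gamma_2$ contains a reflection: then the complements of $R_1$ and of $\tilde\phi^{-1}(R_2)$ have codimension at least two, so $C_0=R_1\cap\tilde\phi^{-1}(R_2)$ is connected and meets $g^{-1}C_0$ for every $g\in\Gamma_1$, and your comparison of $\tilde\phi$ with $\tilde\phi\circ g$ on that overlap goes through. The gap is exactly the reflection case. If $r\in\Gamma_1$ is a reflection with fixed hyperplane $H$, then $C_0$ lies on one side of $H$ and $r^{-1}C_0$ on the other, so $C_0\cap r^{-1}C_0=\emptyset$; there is no open set on which both $\tilde\phi$ and $\tilde\phi\circ r$ are known to satisfy the lift equation, and ``two affine maps agreeing on an open set are equal'' has nothing to act on. Constructing a fresh local lift $\psi$ at $g\tilde p$ does not help: on $gC_0$ both $\psi$ and $\tilde\phi\circ g^{-1}$ satisfy the lift equation, so you only learn $\psi=h'\circ\tilde\phi\circ g^{-1}$ for some $h'\in\Gamma_2$, which is circular and produces no relation between $\tilde\phi$ and $\tilde\phi\circ g$. (Already for $\Gamma_1=\Gamma_2=\mathds{Z}_2$ acting on $\mathds{R}$ by $x\mapsto -x$ one has $C_0=(0,\infty)$ and $g^{-1}C_0=(-\infty,0)$, and nothing in your outline yields $\tilde\phi\circ g=h(g)\circ\tilde\phi$.) The same issue undercuts your last paragraph: the ``unique continuation of isometries of Riemannian orbifolds'' you invoke is itself only obvious on a single connected component of the regular part, so you are again stuck at the reflection walls.

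The paper confronts precisely this obstruction rather than bypassing it. It shows that each connected component $\mathds{R}^n_{reg,i}$ is a fundamental domain for the reflection subgroup $R_i\trianglelefteq\Gamma_i$, computes $\pi_1((\mathds{R}^n/\Gamma_i)_{reg})=\mathds{Z}^{k}\rtimes(\Gamma_i/R_i)$, and uses this to lift $\phi$ globally to an isometry $D_1\to D_2$ between such domains. A separate lemma then shows that any isometry between convex fundamental domains of finite reflection groups extends to an affine map $Ax+b$ that intertwines $R_1$ with $R_2$ (boundary faces of the simplicial cones match up, so generating reflections are conjugated into generating reflections). Only after $\tilde\phi$ is thereby known to be a lift on the \emph{entire} regular set does the paper run the ``$t_2(x)$ is locally constant'' argument to obtain $A\Gamma_1A^{-1}=\Gamma_2$. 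Your proof would be completed by supplying an argument of this kind; an alternative repair in the spirit of your approach is to show, for each reflection $r\in\Gamma_1$, that $\tilde\phi(\mathrm{Fix}(r))$ coincides with the fixed hyperplane of a reflection in $\Gamma_2$ --- this can be read off from the lift equation on $\overline{C_0}$ together with the fact that $\phi$ carries the codimension-one (half-space tangent cone) stratum to itself --- and then use that the reflections generate the wall-crossings.
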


Given this we will quickly prove Proposition \ref{prop_orb_1} and hence the Theorem \ref{thm_main_orb}.

\begin{proof}[Proof of Proposition \ref{prop_orb_1}]
Let $\phi:(\mathds{R}^{n},g_{1})/\Gamma_{1} \rightarrow (\mathds{R}^{n},g_{2})/\Gamma_{2}$ be a distance
preserving homeomorphism.  We show that $\phi$ has a $C^{k+1,\alpha}$ lift at $0$, the proof is the same at any other point.

Let $r>0$ be such that $\widetilde{exp}_{i}:\tilde{B}_{r}(0)\subseteq T_{0}\mathds{R}^{n}\rightarrow (B_{r}(0),g_{i})\subseteq (\mathds{R}^{n},g_{i})$ are homeomorphisms.  Note that $\widetilde{exp}_{i}$ are equivariant with respect to the $\Gamma_{i}$ actions and so descend to homeomorhphism $exp_{i}:\tilde{B}_{r}/\Gamma_{i} \rightarrow (B_{r},d_{i})$.  Now $\phi$ maps segments to segments, hence $\forall$ $v\in \mathds{R}^{n}/\Gamma_{1}$ if we let $\gamma_{v}$ be the unique geodesic in $(\mathds{R}^{n},g_{1})/\Gamma_{1}$ with $\gamma_{v}(0)=0$ and $\dot{\gamma_{v}}=v$ and we let $\epsilon$ be sufficiently small, then $\phi(\gamma_{v}|_{[0,\epsilon]})$ is a geodesic segment in $(\mathds{R}^{n},g_{2})/\Gamma_{2}$ beginning at $0$ with some tangent vector $w$ at $0$.  We define the map $\Lambda:\mathds{R}^{n}/\Gamma_{1} \rightarrow \mathds{R}^{n}/\Gamma_{2}$ by $\Lambda(v)=w$. If we restrict $\phi$ to $B_{r}$ we see that $\phi(x)=exp_{2}(\Lambda(exp^{-1}_{1}(x)))$ for each $x\in B_{r}$.  We will first show $\Lambda$ is a distance preserving homeomorphism.  Let $\epsilon_{j}\rightarrow 0$ and $\phi_{j}=\phi$.  Then we see that $\phi_{j}$ is a distance preserving homeomorphism from $(B_{r/\epsilon_{j}},d_{1}/\epsilon_{j})$ to $(B_{r/\epsilon_{j}},d_{2} /\epsilon_{j})$ and that $\phi_{j}=exp^{j}_{2}(\Lambda((exp^{j}_{1})^{-1}(x)))$, where $exp^{j}$ are exponential maps with respect to the rescaled  metrics (and so differ from the original exponential maps by just by a rescaling). Letting $j$ tend to infinity we see by Ascolli that after passing to a subsequence $\phi_{j}$ converges to a distance preserving homeomorphism $\phi_{\infty}:\mathds{R}^{n}/\Gamma_{1} \rightarrow \mathds{R}^{n}/\Gamma_{2}$ and that $exp_{i}$ tend to the identity maps.  Hence $\Lambda$ is a distance preserving homeomorphism.  By proposition \ref{prop_orb_2} it lifts to a smooth map $\tilde{\Lambda}:\mathds{R}^{n}\rightarrow \mathds{R}^{n}$.  Since the exponential maps lift, we see that a composition of the lifts is a lift of the compositions and hence $\phi$ lifts to a map $\tilde\phi:\mathds{R}^{n}\rightarrow\mathds{R}^{n}$.

Since $\tilde\phi$ is a lift of a distance preserving map, then at least on the open dense subset $\mathds{R}^{n}_{reg}\subseteq\mathds{R}^{n}$ on which the $\Gamma_{i}$ act freely on we see $\tilde\phi$ is locally distance preserving.  But the distance functions on $(\mathds{R}^{n},g_{i})$ are length spaces induced from a continuous metric, and so being locally distance preserving on $\mathds{R}^{n}_{reg}$ implies $\tilde\phi$ is a distance preserving map on all $\mathds{R}^{n}$, and hence $C^{k+1,\alpha}$.
\end{proof}

To construct the lift in Proposition \ref{prop_orb_2} we first construct a lift of the regular part of $(\mathds{R}^{n}/\Gamma)$. This lift must be done in two steps, first by showing a lift to each component of $\mathds{R}^{n}_{reg}$ is possible and then showing that the lifts to each of the components may be done in a compatible way, so that the lift to $\mathds{R}^{n}_{reg}$ extends continuously to a lift on $\mathds{R}^{n}$.

Recall that if a group $H$ acts on a manifold $M$ we call an open subset $U\subset M$ a fundamental domain for $H$ if $\forall h_{1}\neq h_{2}\in H$ $h_{1}U\cap h_{2}U=\emptyset$ and $\cup h \overline{U}=M$.

\begin{lemma}
Let $\Gamma\leq O(n)$ be finite subgroup.  Let $\mathds{R}^{n}_{reg}=\cup\mathds{R}^{n}_{reg,i}$
be the open dense subset that $\Gamma$ acts freely on with $\mathds{R}^{n}_{reg,i}$ the connected components.  Let $R\leq\Gamma$ be the reflection subgroup.  Then each $\mathds{R}^{n}_{reg,i}$ is a fundamental domain for $R$.
\end{lemma}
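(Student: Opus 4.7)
The plan is to identify each $\mathds{R}^{n}_{reg,i}$ with a dense subset of a chamber of the reflection hyperplane arrangement associated to $R$, and then invoke the classical theorem that a finite reflection group acts simply transitively on its chambers. First I would rewrite $\mathds{R}^{n}_{reg} = \mathds{R}^{n} \setminus \bigcup_{g \in \Gamma \setminus \{e\}} \mathrm{Fix}(g)$ and split the union according to whether $g$ is a reflection. Every non-reflection, non-identity element of $\Gamma$ has $\mathrm{Fix}(g)$ of codimension at least $2$, and removing such sets from an open subset of $\mathds{R}^{n}$ does not change its connected components (the case $n=1$ is trivial, as every nontrivial element of $O(1)$ is a reflection). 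Hence the connected components of $\mathds{R}^{n}_{reg}$ are in bijection with the open chambers $C$ of the arrangement $\bigcup_{r \in R,\, r \text{ reflection}} H_{r}$, with each $\mathds{R}^{n}_{reg,i}$ equal to $C \setminus \bigcup_{g \text{ non-reflection}} \mathrm{Fix}(g)$ and dense in its chamber, so that $\overline{\mathds{R}^{n}_{reg,i}} = \overline{C}$.

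Second, I would invoke the classical theorem that $R$ acts simply transitively on the chambers of its own arrangement, so that each chamber closure $\overline{C}$ is a fundamental domain for $R$. Transitivity is a gallery argument: along a generic straight segment from $C$ to another chamber $C'$, the crossed walls are reflection hyperplanes and the reflections across them compose to a $w \in R$ with $w(C)=C'$. Freeness, the more delicate half, follows from Steinberg's fixed-point theorem combined with convex averaging: if $w \in R$ stabilizes $C$, then $\bar{x} = \frac{1}{|\langle w \rangle|}\sum_{j} w^{j}(x)$ for any $x \in C$ lies in $C$ by convexity of the chamber and is fixed by $w$; since $C$ meets no reflection hyperplane of $R$, no $R$-reflection fixes $\bar{x}$, so Steinberg's theorem forces the $R$-stabilizer of $\bar{x}$ to be trivial and $w = e$.

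Third, I would transfer the fundamental-domain conclusion from chambers to the components $\mathds{R}^{n}_{reg,i}$. Every $w \in R \leq \Gamma$ is a homeomorphism of $\mathds{R}^{n}$ permuting the connected components of $\mathds{R}^{n}_{reg}$, so two translates $w_{1}\mathds{R}^{n}_{reg,i}$ and $w_{2}\mathds{R}^{n}_{reg,i}$ are either equal or disjoint; equality forces $w_{1}^{-1}w_{2}$ to stabilize the corresponding chamber, whence $w_{1}=w_{2}$ by Step 2. The closure-covering condition is then immediate from $\bigcup_{w \in R} w\overline{\mathds{R}^{n}_{reg,i}} = \bigcup_{w} w\overline{C} = \mathds{R}^{n}$, using Step 1 to rewrite closures and Step 2 to tile $\mathds{R}^{n}$.

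The main obstacle is the freeness half of Step 2. Convex averaging easily produces a $w$-fixed point inside the chamber, but the remaining implication requires reflection-group structure (via Steinberg's theorem, or equivalently the Coxeter length function); the freeness of $\Gamma$ on $\mathds{R}^{n}_{reg}$ alone cannot close the argument, because no $w$-fixed point lies in $\mathds{R}^{n}_{reg}$ once $w \neq e$, so $\Gamma$-freeness never applies directly to the centroid $\bar{x}$.
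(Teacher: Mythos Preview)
Your proposal is correct and follows essentially the same route as the paper: identify the connected components of $\mathds{R}^{n}_{reg}$ with the chambers of the reflection arrangement of $R$ (up to removing the codimension-$\geq 2$ fixed-point loci of non-reflections), then use that the chambers are fundamental domains for $R$. The only difference is that the paper simply cites Grove--Benson for the chamber result, whereas you unpack it via a gallery argument plus Steinberg's theorem; this adds detail but does not change the strategy.
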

\begin{proof}
Let $\{r_{i}\}\in R$ be the reflections.  Then $\mathds{R}^{n}-\cup_{r_{i}}Fix(r_{i})=\cup C_{i}$, where $C_{i}$ are connected open simplex cones and $Fix(a)$, for $a\in O(n)$, is the linear subspace of fix points of $a$. It is well known (see [GB]) that these $C_{i}$ are fundamental domains for $R$.  Now $\mathds{R}^{n}_{reg} = \mathds{R}^{n} - \cup_{a\in\Gamma}Fix(a) = \mathds{R}^{n} - \cup_{a\not\in \{r_{i}\}}Fix(a) - \cup_{r_{i}}Fix(r_{i})$.  However if $a\in\Gamma$ is not a reflection, then the fix point set of $a$ has codimension at least $2$. Hence $\mathds{R}^{n}_{reg,i} = C_{i} - (\cup_{a\not\in \{r_{i}\}}Fix(a))\cap C_{i}$, so is open dense in $C_{i}$ and is a fundamental domain for $R$ as well.
\end{proof}

As a consequence of the above we see that the connected components of $\mathds{R}^{n}_{reg}$ are in a $1-1$ correspondence with the elements of $R$, and in fact $R$ acts freely and transitively on the them.  That is, for each $r\in R$ and each $\mathds{R}^{n}_{reg,i}$ we see that $r(\mathds{R}^{n}_{reg,i})=\mathds{R}^{n}_{reg,j}$ for $i\neq j$, and for any distinct $\mathds{R}^{n}_{reg,i},\mathds{R}^{n}_{reg,j}$ $\exists !$ $r\in R$ such that
$r(\mathds{R}^{n}_{reg,i})=\mathds{R}^{n}_{reg,j}$.  Also note that the reflection
subgroup $R\leq\Gamma$ is always a normal subgroup.

\begin{lemma}
Let $\Gamma\leq O(n)$ be a finite subgroup with $\mathds{R}^{n}_{reg}=\cup\mathds{R}^{n}_{reg,i}$
and $R\leq\Gamma$ the normal reflection subgroup.  Then $\pi_{1}(\mathds{R}^{n}_{reg,i})= \mathds{Z}^{k}$ and $\pi_{1}((\mathds{R}^{n}/\Gamma)_{reg})=\mathds{Z}^{k}\rtimes(\Gamma/R)$
for some $k\in\mathds{N}$.
\end{lemma}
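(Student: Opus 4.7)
The argument splits naturally into computing $\pi_1(\mathds{R}^n_{reg,i})$ for one component and then passing to the quotient via covering-space theory. From the preceding lemma,
\[
\mathds{R}^n_{reg,i} \;=\; C_i \,\setminus\, \bigcup_{a} \bigl( \mathrm{Fix}(a) \cap C_i \bigr),
\]
where $C_i$ is an open simplicial cone --- therefore convex and contractible --- and the union runs over the non-reflection $a\in\Gamma\setminus\{e\}$ whose fix sets meet $C_i$. Each $\mathrm{Fix}(a)$ is a linear subspace of codimension $\geq 2$. I would first discard the fix sets of codimension $\geq 3$, which do not alter $\pi_1$. For the remaining codimension-$2$ strata, the key is that each is a fix set of a commuting family of finite-order orthogonal rotations; simultaneously block-diagonalizing this family over $\mathds{R}$ yields an orthogonal decomposition $\mathds{R}^n \cong V_1\oplus\cdots\oplus V_r$ in which each relevant codim-$2$ stratum is a sum of blocks. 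The region $\mathds{R}^n_{reg,i}$ then deformation retracts onto a product of punctured $V_j$'s and contractible factors, giving $\pi_1(\mathds{R}^n_{reg,i})\cong\mathds{Z}^k$ with $k$ equal to the number of codim-$2$ fix sets entering $C_i$.

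For the quotient, the quotient map restricts to a regular covering $\mathds{R}^n_{reg}\to (\mathds{R}^n/\Gamma)_{reg}$ with deck group $\Gamma$, but $\mathds{R}^n_{reg}$ is disconnected. Since $R$ acts freely and transitively on the $|R|$ components of $\mathds{R}^n_{reg}$ (by the preceding lemma), the stabilizer $N_i=\{\gamma\in\Gamma:\gamma\cdot\mathds{R}^n_{reg,i}=\mathds{R}^n_{reg,i}\}$ satisfies $N_i\cap R=\{e\}$ and $|N_i|=|\Gamma|/|R|$, so the composite $N_i\hookrightarrow\Gamma\twoheadrightarrow\Gamma/R$ is an isomorphism. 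Consequently $\mathds{R}^n_{reg,i}\to(\mathds{R}^n/\Gamma)_{reg}$ is itself a regular cover with deck group $\Gamma/R$, and the covering exact sequence combined with the first part yields
\[
1 \;\longrightarrow\; \mathds{Z}^k \;\longrightarrow\; \pi_1\!\bigl((\mathds{R}^n/\Gamma)_{reg}\bigr) \;\longrightarrow\; \Gamma/R \;\longrightarrow\; 1.
\]
A splitting giving the semidirect product $\mathds{Z}^k\rtimes(\Gamma/R)$ comes from the fact that $N_i$ acts linearly on $\mathds{R}^n_{reg,i}$ with a (linear, hence contractible) fundamental piece, which lets one lift $\Gamma/R$ into $\pi_1$ as based loops along an $N_i$-orbit of the basepoint; the induced action on the meridian generators is by the obvious permutation on the set of codim-$2$ strata.

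The main obstacle is the abelianness claim in the first part. For a general arrangement of codimension-$2$ linear subspaces in $\mathds{R}^n$ the fundamental group of the complement is typically a free nonabelian group (consider, for example, several distinct lines through the origin in $\mathds{R}^3$). The argument must use essentially that the removed subspaces here are fix sets of commuting orthogonal matrices, so that simultaneous block-diagonalization produces a genuine product decomposition of $\mathds{R}^n_{reg,i}$ up to deformation retraction --- this is what forces the meridians to commute. Executing this decomposition carefully while respecting the convex chamber $C_i$, and then verifying that the $\Gamma/R$-action on $\mathds{Z}^k$ in part two is the expected permutation-with-signs on meridians, is where the bulk of the technical work lies.
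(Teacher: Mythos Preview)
The genuine gap is in the first part, and it undermines your proposed strategy. Your argument rests on the claim that the codimension-$2$ fix sets arise from a \emph{commuting} family of rotations, so that simultaneous block-diagonalization gives a product decomposition forcing the meridians to commute. But the non-reflection elements of $\Gamma$ have no reason to commute. Take $\Gamma=A_5\subset SO(3)$, the icosahedral rotation group: there are no reflections, so $R=\{e\}$ and $\mathds{R}^3_{reg}=\mathds{R}^3_{reg,1}$ is connected, yet the $31$ rotation axes are fix sets of a highly nonabelian group. Here $\mathds{R}^3_{reg}$ retracts onto $S^2$ minus $62$ points, so $\pi_1(\mathds{R}^3_{reg})\cong F_{61}$ is free nonabelian --- not $\mathds{Z}^k$ for any $k$. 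Thus not only does the commuting/diagonalization step fail, the conclusion itself is false as stated. You correctly flagged this as the main obstacle; the paper's own proof at this step is a bare assertion (``since $Fix(a)$ are subspaces of codimension at least $2$ and $C_i$ is a simplex cone, we see that $\pi_1=\mathds{Z}^k$'') and does not resolve it either.

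Your treatment of the second part --- viewing $\mathds{R}^n_{reg,i}\to(\mathds{R}^n/\Gamma)_{reg}$ as a regular $(\Gamma/R)$-cover via the free transitive $R$-action on components, and reading off an extension of $\pi_1(\mathds{R}^n_{reg,i})$ by $\Gamma/R$ --- is essentially the paper's argument (which identifies $\mathds{R}^n_{reg,i}\approx\mathds{R}^n_{reg}/R$ and then quotients by $\Gamma/R$). The paper simply writes down the semidirect product without justifying the splitting, so your discussion there is already more careful than what the paper provides. For the downstream application one in fact only needs that the isomorphism $\phi_*$ carries $p_{1*}(\pi_1(D_1))$ into $p_{2*}(\pi_1(D_2))$, and this can be argued directly from the covering structure without knowing $\pi_1(D_i)$ explicitly.
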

\begin{proof}
We saw before that $\mathds{R}^{n}_{reg,i} = C_{i} - (\cup_{a\not\in \{r_{i}\}}Fix(a))\cap C_{i}$. Since $Fix(a)$ are subspaces of codimension at least $2$ and $C_{i}$ is just a simplex cone, we see that $\pi_{1}(\mathds{R}^{n}_{reg,i})= \mathds{Z}^{k}$ where $k$ is the number of such subspaces of codimension exactly $2$.  Since $R$ acts freely and transitively on $\mathds{R}^{n}_{reg,i}$ we see that $\mathds{R}^{n}_{reg,i}\approx \mathds{R}^{n}_{reg}/R$.  Hence since $\Gamma/R$ acts freely on
$\mathds{R}^{n}_{reg}/R$ we see that $\pi_{1}((\mathds{R}^{n}/\Gamma)_{reg}) = \pi_{1}((\mathds{R}^{n}_{reg}/\Gamma))
= \pi_{1}((\mathds{R}^{n}_{reg}/R)/(\Gamma/R)) = \mathds{Z}^{k}\rtimes(\Gamma/R)$
\end{proof}

The above will be enough to construct a lift to each component $\mathds{R}^{n}_{reg,i}$. To lift to each component in a globally compatible way we will need the following:

\begin{lemma}
Let $R_{1},R_{2}\leq O(n)$ be finite reflection groups.  Let $D_{1},D_{2}\subseteq \mathds{R}^{n}$ be convex fundamental domains for $R_{1},R_{2}$, respectively, and $\phi:D_{1}\rightarrow D_{2}$ be a distance preserving
homeomorphism.  Then $\exists!$ isometric affine extension $\tilde{\phi}:\mathds{R}^{n}\rightarrow\mathds{R}^{n}$
and isomorphism $\tilde{\phi}:R_{1}\rightarrow R_{2}$ such that $\forall r\in R$ and $x\in \mathds{R}^{n}$ we have $\tilde{\phi}(rx)=\tilde{\phi}(r)\tilde{\phi}(x)$.  Further, $R_{1}$ and $R_{2}$ must be conjugate.
\end{lemma}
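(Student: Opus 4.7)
The plan is twofold: first extend $\phi$ to a global affine isometry of $\mathds{R}^{n}$, then obtain the group isomorphism (and the $O(n)$-conjugacy) by conjugating reflections by this extension. Uniqueness will be automatic from rigidity of affine maps on an open set.

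For the extension, I would pick $n+1$ affinely independent points $p_{0},\ldots,p_{n}$ in the nonempty open convex set $D_{1}$ and let $\tilde\phi:\mathds{R}^{n}\to\mathds{R}^{n}$ be the unique affine isometry with $\tilde\phi(p_{i})=\phi(p_{i})$. Such a $\tilde\phi$ exists because $\phi$ preserves the pairwise distances $|p_{i}-p_{j}|$, and it agrees with $\phi$ on all of $D_{1}$: any $x\in D_{1}$ is determined by its Euclidean distances to the affinely independent set $\{p_{i}\}$ up to reflection across their affine hull, and by shrinking one $p_{i}$ off that hull one arranges that $x$ and $\tilde\phi^{-1}(\phi(x))$ both lie in the convex set $D_{1}$, forcing them to coincide. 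Uniqueness of $\tilde\phi$ as an affine map then follows since two affine maps agreeing on an open set are equal.

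For the group statement, the affine isometry $\tilde\phi$ carries the open convex set $D_{1}$ bijectively onto $D_{2}$, hence maps $\bar D_{1}$ to $\bar D_{2}$ and permutes their codimension-one boundary faces (walls). By the classical theory of finite reflection groups, each wall of $\bar D_{i}$ is contained in a unique reflection hyperplane of $R_{i}$, and the corresponding wall reflections generate $R_{i}$. Given a wall $H$ of $\bar D_{1}$ with associated reflection $r\in R_{1}$, the composition $\tilde\phi\circ r\circ\tilde\phi^{-1}$ is an affine involution fixing $\tilde\phi(H)$ pointwise, so it is the reflection in $\tilde\phi(H)$; since $\tilde\phi(H)$ is a wall of $\bar D_{2}$, this reflection lies in $R_{2}$. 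Thus conjugation by $\tilde\phi$ sends a generating set of $R_{1}$ into $R_{2}$, and applying the same argument to $\tilde\phi^{-1}$ gives an isomorphism $R_{1}\cong R_{2}$ satisfying the required equivariance $\tilde\phi(rx)=\tilde\phi(r)\tilde\phi(x)$.

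To upgrade conjugacy to a statement inside $O(n)$, write $\tilde\phi(x)=Ax+b$ with $A\in O(n)$. For each $r\in R_{1}$ and its image $s\in R_{2}$, both being linear elements of $O(n)$, taking linear parts of the identity $s\circ\tilde\phi=\tilde\phi\circ r$ yields $sA=Ar$, so $s=ArA^{-1}$; hence $AR_{1}A^{-1}=R_{2}$. The main obstacle I expect lies in the extension step: one must cleanly verify that a distance-preserving homeomorphism between open convex subsets of $\mathds{R}^{n}$ arises from a global affine isometry, ruling out any "folding" across the affine hull of the chosen basepoints. Once this Mazur–Ulam-style rigidity is in hand, the subsequent wall-reflection analysis and passage to linear parts are essentially routine.
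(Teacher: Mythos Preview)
Your argument is correct and follows the same strategy as the paper's: extend $\phi$ to an affine isometry $\tilde\phi(x)=Ax+b$, identify $\bar D_i$ with a closed Weyl chamber so that its walls lie in reflecting hyperplanes of $R_i$, and obtain the isomorphism by sending each generating wall reflection to the reflection in the image wall. Your worry about the extension step is misplaced: with $n+1$ affinely independent points in $\mathds{R}^n$ the affine hull is all of $\mathds{R}^n$, so distances to them determine a point uniquely and no folding can occur; the paper simply takes this as known. The only organizational difference is that the paper first shows, via a short geometric argument placing the whole line through $b=\tilde\phi(0)$ inside $\bar D_2$, that the linear part $A$ alone maps $\bar D_1$ onto $\bar D_2$, and then separately checks $r_2^{j}b=b$; you instead conjugate by the full affine map $\tilde\phi$ and use that the image wall already spans a linear hyperplane, which yields both conclusions at once. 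This is a mild streamlining rather than a different idea.
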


The existence of the extension in the above is easy to show, the useful part is the existence of the homomorphism.  This tells us that when we restrict to the open dense subsets $\tilde{\phi}: R_{1}\cdot D_{1}\rightarrow R_{2}\cdot D_{2}$ that $\tilde{\phi}$ is a lift of $\phi$.

\begin{proof}
$D_{i}$ convex $\Rightarrow$ $\phi(x)=Ax+b$ for $A\in O(n)$ and $b\in \mathds{R}^{n}$, so the unique extension is obvious.  For $i=1,2$ let $\mathds{R}^{n} - \cup_{a_{i}\in R_{i}} Fix(a_{i})= \cup C_{i}^{j}$ where $C_{i}^{j}$ are the connected cones which are the natural fundamental domains for $R_{i}$.  Because $D_{i}$ are convex, $D_{i}$ are connected.  Since $D_{i}\cap Fix(a_{i}) = \emptyset$ we must then have that there is a $C_{1}\in \{C_{1}^{j}\}$ and a $C_{2}\in \{C_{2}^{j}\}$ such that $D_{i}\subseteq C_{i}$.  Since $D_{i}$ are fundamental domains, they must be open dense in $C_{i}$.

Let $\{r_{1}^{j}\}$ be reflections in $R_{1}$, $\{r_{2}^{j}\}$ in $R_{2}$, such that
$\partial C_{i} = \partial\overline{D}_{i}=\cup_{\{r_{i}^{j}\}} Fix(r_{i}^{j})$.  Note that $\{r_{1}^{j}\}$ generates $R_{1}$, $\{r_{2}^{j}\}$ generates $R_{2}$ (see \cite{GB}).  Now $\phi$ extends to a distance preserving homeomorphism $\phi:\overline{C}_{1}\rightarrow \overline{C}_{2}$.  So $b=\phi(0)\in \overline{C}_{2}$.  If $\phi(x)=0$ then $x\in\overline{C}_{1}$ since $0\in\overline{C}_{2}$, and  hence $x=-A^{-1}b\in\overline{C}_{1}$.  Since $\overline{C}_{1}$ is a cone we see that $2x=-2A^{-1}b\in\overline{C}_{1}$ $\Rightarrow$ $\phi(2x)=-b\in\overline{C}_{2}$.  Hence the whole line generated by $b$ must be contained in $\overline{C}_{2}$.  Since $\overline{C}_{1}$ is a cone we have that $\forall x\in \overline{C}_{1}$ that $x-A^{-1}b\in \overline{C}_{1}$, and so $Ax=\phi(x-A^{-1}b)$ is a distance preserving homeomorphism $A:\overline{C}_{1}\rightarrow\overline{C}_{2}$.

Now $\forall r_{1}^{j}$, $A r_{1}^{j} A^{-1}$ is another reflection uniquely defined by $Fix(A r_{1}^{j} A^{-1}) = A\cdot Fix(r_{1}^{j})$.  Since $A$ is a linear homeomorphism $A:\overline{C}_{1}\rightarrow\overline{C}_{2}$, $A$ must map hypersurface boundary components of $\overline{C}_{1}$ to hypersurface boundary components of $\overline{C}_{2}$.  Hence $A r_{1}^{j} A^{-1}\in \{r_{2}^{j}\}$.  Since $\{r_{i}^{j}\}$ generate $R_{i}$ we see that $\tilde{\phi}(r)\equiv ArA^{-1}$ maps $R_{1}$ into $R_{2}$. By using the same argument on $\tilde\phi^{-1}=A^{-1} r A$ we see this map is an isomorphism.  Hence $R_{1}$ and $R_{2}$ are conjugate.

Next we show that $ArA^{-1}b =b$ $\forall r\in R_{1}$.  By the previous paragraph it is enough to show $r_{2}^{j}b=b$ $\forall j$.  So assume not for some $j$.  Then the line generated by $b$ is not contained in $Fix(r_{2}^{j})$ and so must lie on both sides of the hypersurface.  However, $\overline{C}_{2}$ lies strictly on one side, since $Fix(r_{2}^{j})$ forms boundary component, and the line generated by $b$ is contained in $\overline{C_{2}}$ by the above.  This is a contradiction.  So $r_{2}^{j}b=b$ $\forall j$ and hence $ArA^{-1}b =b$ $\forall r\in R_{1}$.  Thus we see that $\forall r\in R_{1}$ and $x\in\mathds{R}^{n}$ that $\tilde{\phi}(r\cdot x)=ArA^{-1}x+b=ArA^{-1}(Ax+b)= \tilde{\phi}(r)\tilde{\phi}(x)$.
\end{proof}

We are now in a position to finish the proof of proposition \ref{prop_orb_2}:

\begin{proof}[Proof of Proposition \ref{prop_orb_2}]
Let $\phi:\mathds{R}^{n}/\Gamma_{1}\rightarrow \mathds{R}^{n}/\Gamma_{2}$ be a distance preserving homeomorphism with $R_{1}\leq\Gamma_{1}$, $R_{2}\leq\Gamma_{2}$ be the reflection subgroups.  Now $\phi$ maps segments to segments, and the interior of each segment is contained in a single stratum.  Since the strata are convex we then see $\phi$ maps stratum to stratum and since $\phi$ is a homeomorphism it must restrict to a distance preserving homeomorphism $\phi:(\mathds{R}^{n}/\Gamma_{1})_{reg}\rightarrow (\mathds{R}^{n}/\Gamma_{2})_{reg}$.

Pick $D_{1}\in \{\mathds{R}^{n}_{reg,1,i}\}$ and $D_{2}\in \{\mathds{R}^{n}_{reg,2,i}\}$.  Then we showed that

\[
\pi_{1}(D_{1})=\mathds{Z}^{k_{1}}, \pi_{1}((\mathds{R}^{n}/\Gamma_{1})_{reg})=\mathds{Z}^{k_{1}}\rtimes(\Gamma_{1}/R_{1})
\]
\[
\pi_{1}(D_{2})=\mathds{Z}^{k_{2}}, \pi_{1}((\mathds{R}^{n}/\Gamma_{2})_{reg}) =\mathds{Z}^{k_{2}}\rtimes(\Gamma_{2}/R_{2})
\]

Since $\phi$ is a homeomorphism from $(\mathds{R}^{n}/\Gamma_{1})_{reg}$ to $(\mathds{R}^{n}/\Gamma_{2})_{reg}$ we see that $k_{1}=k_{2}\equiv k$ and $|\Gamma_{1}/R_{1}|=|\Gamma_{2}/R_{2}|$.  If $p_{i}:D_{i}\rightarrow (\mathds{R}^{n}/\Gamma_{i})_{reg}$ are the projection maps, then we can lift on the left to get $\tilde{\phi}=\phi\circ p_{1}:D_{1}\rightarrow (\mathds{R}^{n}/\Gamma_{2})_{reg}$.  Now $p_{1},p_{2},\phi$ all induce corresponding maps between the fundamental groups of the manifolds.  Since $p_{i}$ are normal coverings $p_{i}(\pi_{1}(D_{i}))=\mathds{Z}^{k}$.  Since $\pi_{1}((\mathds{R}^{n}/\Gamma_{1})_{reg})$, $\pi_{1}((\mathds{R}^{n}/\Gamma_{2})_{reg})$ are finite extensions of $\mathds{Z}^{k}$, the induced homomorphism between the fundamental groups by $\phi$ must map the $\mathds{Z}^{k}$ factor to itself.  Hence $\tilde{\phi}(\pi_{1}(D_{1}))\subseteq p_{2}(\pi_{1}(D_{2}))$ (in fact equals).  So there is a lifted distance preserving homeomorphism $\tilde{\phi}:D_{1}\rightarrow D_{2}$.

We apply the last lemma to see that $\exists$ $\tilde{\phi}=Ax+b:\mathds{R}^{n}\rightarrow\mathds{R}^{n}$ which is a lift
on $R_{1}\cdot D_{1}\rightarrow R_{2}\cdot D_{2}$, hence $\tilde{\phi}$ is a lift of $\phi$.  Since $R_{1}$ and $R_{2}$ are conjugate and $|\Gamma_{1}/R_{1}|=|\Gamma_{2}/R_{2}|$ we see $|\Gamma_{1}|=|\Gamma_{2}|$.  Let $U\subseteq \mathds{R}^{n}_{reg,1}\cap\tilde{\phi}^{-1}(\mathds{R}^{n}_{reg,2})$ be an open connected subset (possible since $\mathds{R}^{n}_{reg,1}$,$\mathds{R}^{n}_{reg,2}$ are open dense cones and $\tilde{\phi}$ affine).  Since $\tilde{\phi}$ is a lift we have $\forall t_{1}\in\Gamma_{1}$ and $x\in\mathds{R}^{n}$ $\exists$ $t_{2}(x)\in \Gamma_{2}$ such that $\tilde{\phi}(t_{1}x)=t_{2}(x)\tilde{\phi}(x)$.  Then for $t_{1}$ fixed we see that $t_{2}(x)\tilde{\phi}(x)$ is continuous.  On $U$ this is possible iff $t_{2}(x)=t_{2}$ is constant.  Hence on $U$ $\tilde{\phi}(t_{1}x)=t_{2}\tilde{\phi}(x)$.  Since $\tilde{\phi}$ affine this must hold on all $\mathds{R}^{n}$. Hence $A(t_{1}(A^{-1}(x-b)))+b=t_{2}x$ $\Rightarrow$ $At_{1}A^{-1}x+(b-AbA^{-1})=t_{2}x$.  Letting $x$ go to zero
we see that $(b-AbA^{-1})=0$ and hence $A\Gamma_{1}A^{-1}\subseteq \Gamma_{2}$.  Since $|\Gamma_{1}|=|\Gamma_{2}|$ we see $A\Gamma_{1}A^{-1} = \Gamma_{2}$.
\end{proof}

\section*{Acknowledgements} The authors would like the thank John Lott for his helpful corrections.

\end{document}